\newtheorem{theorem}{Theorem}[section]
\newtheorem{conjecture}{Conjecture}[section]
\newtheorem{corollary}[theorem]{Corollary}
\newtheorem{lemma}[theorem]{Lemma}
\newtheorem{definition}{Definition}[section]
\DeclareMathOperator{\ord}{ord}
\DeclareMathOperator{\lcm}{lcm}
\DeclareMathOperator{\sgn}{sgn}
\begin{document}
\title{The fundamental period of a periodic phenomenon pertaining to $v$-palindromes}
\author{Daniel Tsai\\
Graduate School of Mathematics, Nagoya University\\
Furocho, Chikusa-ku, Nagoya 464-8602}
\smallbreak \maketitle
\begin{abstract}
Natural numbers satisfying a certain unusual property are defined by the author in a previous note. Later, the author called such numbers $v$-palindromic numbers and proved a periodic phenomenon pertaining to such numbers and repeated concatenations of the digits of a number. It was left as a problem of further investigation to find the smallest period. In this paper, we provide a method to find the smallest period. Some theorems from signal processing are used, but we also supply our own proofs.
\end{abstract}

Keywords: fundamental period, periodic, palindrome

\vskip 0.2truein

\vskip 0.2truein
\setcounter{equation}{0}
\setcounter{section}{0}

\section{Introduction.}

  In \cite{T2}, natural numbers satisfying an unusual property are defined and their infinitude proved. Consider the natural number $56056$. The number formed by reversing its decimal digits is $65065$. Their canonical factorizations are
\begin{align}
  56056 &= 2^3\cdot 7^2\cdot11\cdot13, \label{example} \\
  65065 &= 5\cdot7\cdot11\cdot13^2. \label{reverse}
\end{align}
Notice that
\begin{equation}
  (2+3)+(7+2)+11+13=5+7+11+(13+2),
\end{equation}
which is a bit surprising. 
In other words, the sum of the prime divisors and exponents larger than $1$ on the right-hand side of \eqref{example} is equal to that of \eqref{reverse}. Such numbers are called $v$-palindromic numbers in \cite{T1}, of which we shall give a formal definition.

\begin{definition}
  Let $n$ be a natural number, its {\em reverse} is the number formed by reversing its decimal digits, denoted $r(n)$. Therefore $r(n)$ has the same number of digits as $n$ if $10\nmid n$ and fewer digits if $10\mid n$. (Here, a notation like $a\mid b$ means that $a$ divides $b$.)
\end{definition}

\begin{definition}
  For a natural number $n>1$, its {\em factorization sum} is the sum of the prime divisors and exponents larger than $1$ in its canonical factorization, denoted $v(n)$. Also, by convention, $v(1)=1$.
\end{definition}

The notation $v(n)$ is the one being used in both \cite{T2} and \cite{T1}, so we continue to use it here. The ``$v$" actually came from ``value". The quantity $v(n)$ is thought of as the ``value" of $n$. We obviously have the following.

\begin{theorem}
  The function $v\colon\mathbb{N}\to \mathbb{N}\cup\{0\}$ is additive. That is, $v(mn)=v(m)+v(n)$ whenever $m$ and $n$ are coprime natural numbers.
\end{theorem}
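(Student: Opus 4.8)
The plan is to reduce everything to the uniqueness of the canonical factorization. First I would set aside the degenerate cases where $m=1$ or $n=1$. Here a word of caution is in order: because of the convention $v(1)=1$, the displayed identity actually fails when one of the arguments is $1$ (e.g.\ $v(1\cdot n)=v(n)$ but $v(1)+v(n)=1+v(n)$), so the statement should be read with $m,n>1$ understood — equivalently, $v$ is additive in the strict sense on $\mathbb{N}_{>1}$, a genuine additive function being forced to vanish at $1$. So from now on assume $m,n>1$.

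Next I would write the canonical factorizations
\[
  m=p_1^{a_1}\cdots p_k^{a_k},\qquad n=q_1^{b_1}\cdots q_\ell^{b_\ell},
\]
where the $p_i$ are distinct primes, the $q_j$ are distinct primes, and every exponent is at least $1$. The key step — and the only place the hypothesis is used — is the observation that coprimality of $m$ and $n$ means the set $\{p_1,\dots,p_k\}$ is disjoint from $\{q_1,\dots,q_\ell\}$. Consequently the product $mn=p_1^{a_1}\cdots p_k^{a_k}\,q_1^{b_1}\cdots q_\ell^{b_\ell}$ is \emph{already} in canonical form: no prime is repeated and no exponent changes when the two factorizations are merged. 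By uniqueness of the canonical factorization, this is the canonical factorization of $mn$.

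Finally I would simply unwind the definition of $v$. By definition $v(mn)$ is the sum of all the primes occurring in the canonical factorization of $mn$ together with all of those exponents that exceed $1$. Since that factorization is precisely the concatenation of the factorization of $m$ with that of $n$, this sum splits termwise: the contribution of the $p_i$ together with whichever $a_i>1$ is exactly $v(m)$, and the contribution of the $q_j$ together with whichever $b_j>1$ is exactly $v(n)$. Adding gives $v(mn)=v(m)+v(n)$.

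I do not expect any real obstacle: as the author's ``We obviously have the following'' already signals, this is an immediate consequence of unique factorization. The only points needing a moment's attention are the bookkeeping around the ``exponent $>1$'' rule — harmless, since it is applied to each exponent individually and the exponents are unchanged on passing to the product — and the convention $v(1)=1$, which is why the degenerate cases are excluded above.
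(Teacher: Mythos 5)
Your proof is correct and is exactly the unique-factorization argument the paper leaves unwritten (it states the theorem with only the remark ``We obviously have the following''), so the approaches coincide. Your caveat that, under the convention $v(1)=1$, the identity $v(mn)=v(m)+v(n)$ fails literally when $m=1$ or $n=1$ is a fair and worthwhile observation; the statement should indeed be read with $m,n>1$.
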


\begin{definition}
  A natural number $n$ is {\em $v$-palindromic} (or a {\em $v$-palindrome}) if $n$ is not a multiple of $10$, $n\ne r(n)$ (i.e.\ $n$ is not palindromic), and $v(n)=v(r(n))$.
\end{definition}

  We explain the choice of the name {\em $v$-palindrome}. A natural number $n$ is a {\em palindrome} if $n=r(n)$. The relation $v(n)=v(r(n))$ differs by having an $v$ in front. The condition that $n$ is not a multiple of $10$ is for ensuring that $r(n)$ does not have fewer digits than $n$. If we do not impose this condition, then $560$ would be $v$-palindromic, however we insist on imposing that $10\nmid n$ and therefore not consider $560$ as $v$-palindromic. The condition that $n\ne r(n)$ is included because if $n=r(n)$, then plainly $v(n)=v(r(n))$, which would not be surprising at all. Another possible viewpoint is to discard $n\ne r(n)$ in the definition of $v$-palindromes and to regard the palindromes as {\it trivial} $v$-palindromes. We do not adopt this alternative viewpoint though.
  
  The smallest $v$-palindrome is $18$, because $18=2\cdot 3^2$, $81=3^4$, and both factorization sums are $7$. The first natural question is then whether there are infinitely many of them, and the answer is affirmative. As proved in \cite{T2}, all the numbers
\begin{equation}\label{1stexample}
  18, 198, 1998, 19998, \ldots
\end{equation}
are $v$-palindromes. Also mentioned in \cite{T2}, there is another sequence of $v$-palindromes
\begin{equation}\label{2ndexample}
  18,1818,181818,\ldots,
\end{equation}
where we simply continue to concatenate $18$. It was this sequence which inspired the content of \cite{T1}, which investigates which of the repeated concatenations of a number are $v$-palindromes, and found a periodic phenomenon. It then posed three problems of further investigation pertaining to this periodic phenomenon. It is the purpose of this article to address the first two of these problems. In the next section, we shall recall the main theorem of \cite{T1}.

\section{Repeated concatenations and $\lowercase{v}$-palindromicity.}\label{sect2}

We first give the following notation.

\begin{definition}
  Let $n$ be a natural number, then the number formed by repeatedly concatenating its decimal digits $k$ times is denoted by $n(k)$.
\end{definition}

For example, $18(3)=181818$ and $56056(4)=56056560565605656056$. The main theorem of \cite{T1} can now be stated.

\begin{theorem}\label{main1}
  Let $n$ be a natural number such that $10\nmid n$ and $n\ne r(n)$. There exists an integer $\omega>0$ such that for all integers $k\ge1$, $n(k)$ is $v$-palindromic if and only if $n(k+\omega)$ is $v$-palindromic.
\end{theorem}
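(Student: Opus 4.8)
The plan is to track how $v(n(k))$ and $v(r(n(k)))$ depend on $k$ through congruence conditions, and then exhibit a single period $\omega$ that works for the $v$-palindromicity of $n(k)$ for all $k$. Write $n$ with $d$ digits, so that $n(k) = n \cdot R_k$ where $R_k = \underbrace{1\,0^d\,1\,0^d\cdots 1}_{k \text{ ones}} = \sum_{j=0}^{k-1} 10^{dj} = \frac{10^{dk}-1}{10^d-1}$, the base-$10^d$ repunit of length $k$. Note that $10\nmid n$ forces $10\nmid n(k)$, and similarly $r(n(k)) = r(n)\cdot R_k$ (the reversal of a $k$-fold concatenation is the $k$-fold concatenation of the reversal, since $10\nmid n$). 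The key structural fact is the additivity of $v$ from Theorem~1.1: if we can split off the part of $R_k$ coprime to $n$, we get $v(n(k)) = v(n\cdot \gcd\text{-part}) + v(\text{coprime part of } R_k)$, so the problem reduces to understanding $v(R_k)$ and the interaction between $R_k$ and the fixed numbers $n$ and $r(n)$.

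The first main step is to show that the prime factorization of $R_k$ — at least the data that $v$ sees — is eventually periodic in $k$ in a suitable sense. For a fixed prime $p$, the $p$-adic valuation $\operatorname{ord}_p(R_k)$ is governed by classical lifting-the-exponent type behavior: $p \mid R_k$ iff $k$ lies in a fixed residue class mod $\operatorname{ord}_p(10^d)$ (for $p\nmid 10^d-1$), and then $\operatorname{ord}_p(R_k)$ grows only when $p \mid k$ as well. The subtlety is that $v$ counts a prime $p$ only when $\operatorname{ord}_p \ge 1$ and counts the exponent only when it is $\ge 2$; and, crucially, $v$ could a priori be affected by arbitrarily large primes dividing $R_k$. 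The way around this is the observation that we do not need to compute $v(n(k))$ exactly — we only need the \emph{difference} $v(n(k)) - v(r(n(k)))$, and since $n$ and $r(n)$ have the same digits (just reversed) they are divisible by the same small primes to controlled powers, so almost all of $R_k$'s prime factorization cancels in the difference. More precisely, one shows $v(n(k)) - v(r(n(k))) = v(n) - v(r(n)) + (\text{correction terms supported on primes dividing } \gcd(n r(n), R_k))$, and the correction terms depend on $k$ only through $k \bmod M$ for a suitable modulus $M$ built from $n$, $r(n)$, and the multiplicative orders involved.

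Concretely, the second step is to define $\omega$ (or rather, produce \emph{some} valid $\omega$; minimality is deferred to the later sections the abstract promises). Let $S$ be the set of primes dividing $n\cdot r(n)$, and for each $p\in S$ let $e_p$ be chosen larger than $\operatorname{ord}_p(n) + \operatorname{ord}_p(r(n))$. One argues that whether a given $p\in S$ contributes to $v(n(k))$, and with what exponent (capped in a way that only the first few values of $\operatorname{ord}_p(R_k)$ matter), is determined by $k \bmod p^{e_p}\cdot \operatorname{ord}_p(10^d)$; meanwhile the "new" primes dividing $R_k$ but not in $S$ contribute identically to $v(n(k))$ and $v(r(n(k)))$ because $n/\gcd$ and $r(n)/\gcd$ are then coprime to that part, so $v$ splits off the same summand on both sides. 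Hence $v(n(k)) = v(r(n(k)))$ is equivalent to a condition on $k$ that is periodic with period $\omega := \operatorname{lcm}\big(d,\ \operatorname{lcm}_{p\in S}\big(p^{e_p}\operatorname{ord}_p(10^d)\big)\big)$, and combined with the fact that $n(k)$ is never a multiple of $10$ and the non-palindromicity condition $n(k)\ne r(n(k))$ (which holds for all $k$ once $n\ne r(n)$, or fails for all $k$), this gives $n(k)$ $v$-palindromic $\iff$ $n(k+\omega)$ $v$-palindromic.

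I expect the main obstacle to be rigorously controlling the "new large primes" in $R_k$ — proving that their total contribution to $v(n(k)) - v(r(n(k)))$ is exactly zero rather than merely bounded. This requires the clean statement that if $p \nmid n$ then $\operatorname{ord}_p(n(k)) = \operatorname{ord}_p(R_k)$, so that the summand of $v$ coming from $p$ is literally the same in $v(n(k))$ and in $v(R_k\cdot(\text{anything coprime to }p))$; the additivity theorem then lets one cancel the entire "coprime-to-$nr(n)$" part of $R_k$ between the two sides in one stroke. Once that cancellation is in place, everything else is a finite check over residue classes modulo $\omega$, using only elementary facts about orders of $10$ modulo prime powers.
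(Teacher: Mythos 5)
Your proposal is correct and follows essentially the same route as the proof in \cite{T1} on which the paper relies (and whose machinery Section 4 restates): write $n(k)=n\rho_k$ and $r(n(k))=r(n)\rho_k$, cancel the part of $\rho_k$ coprime to $n\,r(n)$ using additivity of $v$, and note that each remaining (crucial) prime's contribution to $v(n(k))-v(r(n(k)))$ depends only on a capped value of $\ord_p(\rho_k)$, which is a periodic function of $k$ governed by the orders of $10^d$ modulo $p^{j+\ord_p(10^d-1)}$ (the paper's $h_{p^j,d}$), exactly as in the construction of $\omega_f(n)$. One small repair: in your formula for $\omega$ the multiplicative order of $10^d$ modulo $p$ is undefined for $p\in\{2,5\}$, so those primes must be excluded from the lcm (they cause no harm, since $\rho_k\equiv 1\pmod{10}$ makes their contribution constant in $k$), after which your $\omega$ is a valid period containing $\omega_f(n)$ as the essential part.
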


\begin{definition}\label{defnp}
  For $n$ as in Theorem \ref{main1}, a permissible $\omega$ will be called a {\em period} of $n$. The smallest one will be called the {\em fundamental period} of $n$, denoted $\omega_0(n)$.
\end{definition}

Regarding periods and the fundamental period, we have the following, which follows from Exercise 17(a) on p.\ 145 of \cite{A}.

\begin{theorem}
  Let $n$ be as in Theorem \ref{main1}, then the set of all periods is the set of all positive integral multiples of $\omega_0(n)$.
\end{theorem}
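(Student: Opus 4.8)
The plan is to derive the statement from two closure properties of the set of periods together with the well-ordering principle; the cited exercise establishes the same fact for an arbitrary periodic function, and the argument is purely combinatorial. Write $A=\{k\in\N:k\ge1,\ n(k)\text{ is }v\text{-palindromic}\}$, and let $P$ denote the set of periods of $n$ in the sense of Definition \ref{defnp}, so that $\omega\in P$ exactly when $\omega$ is a positive integer with $k\in A\iff k+\omega\in A$ for every integer $k\ge1$. By Theorem \ref{main1} the set $P$ is nonempty, so by well-ordering $\omega_0(n)=\min P$ exists; the goal is to show that $P$ equals the set of positive integral multiples of $\omega_0(n)$.

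First I would prove the key closure lemma: if $\omega_1,\omega_2\in P$ then $\omega_1+\omega_2\in P$, and if moreover $\omega_1>\omega_2$ then $\omega_1-\omega_2\in P$. For the sum, chain the two defining equivalences: for $k\ge1$ we have $k\in A\iff k+\omega_1\in A\iff (k+\omega_1)+\omega_2\in A$, the second step being legitimate since $k+\omega_1\ge1$. For the difference, set $\delta=\omega_1-\omega_2\ge1$ and fix $k\ge1$; applying the defining property of $\omega_2$ to the index $k+\delta\ge1$ gives $k+\delta\in A\iff (k+\delta)+\omega_2\in A$, i.e.\ $k+\delta\in A\iff k+\omega_1\in A$, while applying the defining property of $\omega_1$ to the index $k$ gives $k\in A\iff k+\omega_1\in A$; combining, $k\in A\iff k+\delta\in A$, so $\delta\in P$. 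The one point demanding care is precisely this index bookkeeping: the definition of a period only constrains indices $k\ge1$, so one must verify that every index to which an equivalence is applied is itself $\ge1$ — which holds here because $\delta\ge1$ forces $k+\delta\ge2$, and $\omega_i\ge1$ forces $k+\omega_i\ge2$.

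Given the lemma, I would finish with the division algorithm. Closure under addition, applied repeatedly to $\omega_0(n)\in P$, shows that every positive multiple of $\omega_0(n)$ lies in $P$. Conversely, let $\omega\in P$ and write $\omega=q\,\omega_0(n)+r$ with $q\ge1$ an integer and $0\le r<\omega_0(n)$. Subtracting $\omega_0(n)$ from $\omega$ one step at a time — each subtraction remaining inside $P$ by the lemma so long as the result is still positive — shows that $r\in P$ whenever $r>0$; but then $r<\omega_0(n)=\min P$, a contradiction. Hence $r=0$, so $\omega_0(n)\mid\omega$, and therefore $P$ is exactly the set of positive integral multiples of $\omega_0(n)$. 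I do not anticipate a genuine obstacle: the argument is elementary, and the only subtlety, already flagged, is keeping all shifted indices within the admissible range $k\ge1$ throughout the closure arguments.
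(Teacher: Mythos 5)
Your proof is correct. The paper itself gives no argument for this statement --- it simply cites Exercise 17(a) on p.~145 of Apostol --- so what you have done is supply the self-contained elementary proof that the citation stands in for: closure of the set of periods under addition and (positive) subtraction, well-ordering to get $\omega_0(n)=\min P$, and the division algorithm to force the remainder to vanish. This is in substance the same argument the paper later writes out for periodic functions on $\mathbb{Z}$ in Theorem \ref{restriction}(ii), except that there the domain is all of $\mathbb{Z}$ and no index bookkeeping is needed, whereas here Definition \ref{defnp} only constrains indices $k\ge1$; your explicit verification that every shifted index stays $\ge1$ (in the subtraction lemma and in the step-by-step reduction $\omega,\ \omega-\omega_0(n),\ \ldots,\ r$) is exactly the point that makes the restricted-domain version go through, and it parallels the role of Theorem \ref{restriction}(i) in the paper. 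One cosmetic remark: writing $\omega=q\,\omega_0(n)+r$ with $q\ge1$ is justified only because $\omega\ge\omega_0(n)=\min P$, which is worth saying in one clause; otherwise the argument is complete.
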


For example, since all the numbers \eqref{2ndexample} are $v$-palindromes, $\omega_0(18)=1$. In fact, $\omega_0(56056)=1$ too. The proof of Theorem \ref{main1} of \cite{T1} is constructive and found a particular period. In order to state this period, we need to define certain numbers which are introduced in \cite{T1} in Lemma 1.

\begin{definition}\label{hqd}
  Let $p^\alpha$ be a prime power, where $p\ne2,5$, and let $d$ be a natural number. Denote by $h_{p^\alpha,d}$ the order of $10^d$ regarded as an element of $(\mathbb{Z}/p^{\alpha+\ord_p(10^d-1)}\mathbb{Z})^\times$. (Here, $\ord_p(a)$ denotes the exponent of the prime $p$ in the canonical factorization of $a$.) In other words, $h_{p^\alpha,d}$ is the smallest positive integer such that
  \begin{equation}
    (10^d)^{h_{p^\alpha,d}}\equiv 1\pmod{p^{\alpha+\ord_p(10^d-1)}}.
  \end{equation}
  By Lemma 1 of \cite{T1}, $h_{p^\alpha,d}>1$.
\end{definition}

\begin{definition}\label{cp}
  Let $n$ be as in Theorem \ref{main1}. A {\it crucial prime} of $n$ is a prime $p$ for which $\ord_p(n)\ne\ord_p(r(n))$. The set of all crucial primes of $n$ will be denoted $K(n)$.
\end{definition}

The constructed period of $n$ in \cite{T1} is the following.

\begin{theorem}
  Let $n$ be as in Theorem \ref{main1} and let the number of decimal digits $n$ has be denoted $d$, then
  \begin{equation}\label{wf}
    \omega_f(n)=\lcm\{h_{p^2,d}\mid p\in K(n)\setminus\{2,5\}\}.
  \end{equation}
  is a period of $n$.
\end{theorem}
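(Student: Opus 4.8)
The plan is to reduce the $v$-palindromicity of $n(k)$ to the vanishing of a single integer-valued function $g(k)$, and then to prove that $g$ has period $\omega_f(n)$.

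First I would record the algebra of concatenation. With $d$ the number of digits of $n$ and $M_k := 1 + 10^d + \cdots + 10^{(k-1)d} = \frac{10^{kd}-1}{10^d-1}$, one has $n(k) = n\,M_k$, and since reversing a string made of $k$ copies of the digit-string of $n$ produces $k$ copies of the digit-string of $r(n)$, also $r(n(k)) = r(n)(k) = r(n)\,M_k$. As $M_k \equiv 1 \pmod{10}$ we get $10 \nmid n(k)$, and comparing first blocks shows $n(k) \ne r(n(k))$ because $n \ne r(n)$. Hence $n(k)$ is $v$-palindromic precisely when $g(k) := v(n\,M_k) - v(r(n)\,M_k) = 0$, and it suffices to show $g(k) = g(k+\omega_f(n))$ for every $k \ge 1$.

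Next I would decompose $g$ over primes. Writing $v(m) = \sum_p f_p(\ord_p m)$ with $f_p(0)=0$, $f_p(1)=p$ and $f_p(a)=p+a$ for $a \ge 2$, and using complete additivity of $\ord_p$, the contribution of a prime $p$ to $g(k)$ is $f_p(\ord_p n + \ord_p M_k) - f_p(\ord_p r(n) + \ord_p M_k)$. This vanishes for $p \notin K(n)$, since then $\ord_p n = \ord_p r(n)$; it is a constant independent of $k$ for $p \in \{2,5\}$, since $\gcd(M_k,10)=1$ forces $\ord_p M_k = 0$; and for $p \in K(n)\setminus\{2,5\}$ the point is that as soon as $\ord_p M_k \ge 2$ both arguments of $f_p$ are $\ge 2$, so the contribution collapses to the constant $\ord_p n - \ord_p r(n)$. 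Consequently the contribution of such a $p$ is a function of $\min(\ord_p M_k, 2)$ alone, and $g(k)$ equals a constant plus $\sum_{p \in K(n)\setminus\{2,5\}}$ of these functions. (If $K(n) \subseteq \{2,5\}$ this sum is empty, $\omega_f(n)=1$, and $g$ is constant, so the claim is immediate; otherwise proceed as follows.)

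The crux is therefore the periodicity of $k \mapsto \min(\ord_p M_k, 2)$ for $p \in K(n)\setminus\{2,5\}$. Here I would use $\ord_p M_k = \ord_p(10^{kd}-1) - \ord_p(10^d - 1)$, so that, with $s := \ord_p(10^d - 1)$, one has $\ord_p M_k \ge j \iff (10^d)^k \equiv 1 \pmod{p^{\,j+s}} \iff h_{p^j,d} \mid k$, directly from Definition \ref{hqd}. Since reduction modulo $p^{1+s}$ is a ring homomorphism, the order of $10^d$ modulo $p^{1+s}$ divides its order modulo $p^{2+s}$, i.e. $h_{p,d} \mid h_{p^2,d}$; hence both $\{k : \ord_p M_k \ge 1\}$ and $\{k : \ord_p M_k \ge 2\}$ are unions of residue classes modulo $h_{p^2,d}$, so $\min(\ord_p M_k, 2)$ has period $h_{p^2,d}$, and a fortiori period $\omega_f(n) = \lcm\{h_{p^2,d} : p \in K(n)\setminus\{2,5\}\}$. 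Summing over $p$, $g$ has period $\omega_f(n)$, which is the theorem. I expect the main obstacle to be the bookkeeping in the prime-by-prime decomposition — especially isolating the fact that only $\min(\ord_p M_k, 2)$ matters because $f_p$ is eventually linear — while the number-theoretic core in the last step is short once the identity $\ord_p M_k \ge j \iff h_{p^j,d} \mid k$ is in hand.
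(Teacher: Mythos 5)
Your proof is correct, and it is essentially the same argument the paper relies on: the paper itself only cites the constructive proof in \cite{T1}, whose machinery (recalled in Sections 3--4 here) is exactly your prime-by-prime decomposition of $v(n\rho_k)-v(r(n)\rho_k)$, with the criteria $\ord_p(\rho_k)\ge j \iff h_{p^j,d}\mid k$ and the observation $h_{p,d}\mid h_{p^2,d}$ that the paper uses to pass from \eqref{wfo} to \eqref{wf}.
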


Here the ``$f$" in the notation $\omega_f(n)$ comes from ``found", because $\omega_f(n)$ is a period of $n$ found. The formula \eqref{wf} is originally written as
\begin{equation}\label{wfo}
  \omega_f(n)=\lcm\{h_{p,d},h_{p^2,d}\mid p\in K(n)\setminus\{2,5\}\}
\end{equation}
in \cite{T1}. However, 
in fact we always have $h_{p,d}\mid h_{p^2,d}$, thus \eqref{wfo} can be written more shortly as \eqref{wf}.
We show that in fact $h_{p,d}\mid h_{p^2,d}$. Since
\begin{equation}
  (10^d)^{h_{p^2,d}}\equiv 1\pmod{p^{2+\ord_p(10^d-1)}},
\end{equation}
plainly
\begin{equation}
  (10^d)^{h_{p^2,d}}\equiv 1\pmod{p^{1+\ord_p(10^d-1)}}.
\end{equation}
Now $h_{p,d}$ is the order of $10^d$ regarded as an element of $(\mathbb{Z}/p^{1+\ord_p(10^d-1)}\mathbb{Z})^\times$, thus $h_{p,d}\mid h_{p^2,d}$ follows from the structure of cyclic groups.

After calculating $\omega_0(n)$ and $\omega_f(n)$ for small $n$, the following is conjectured in \cite{T1}.

\begin{conjecture}\label{conj1}
  Let $n$ be as in Theorem \ref{main1}, then either $\omega_0(n)=1$ or $\omega_0(n)=\omega_f(n)$.
\end{conjecture}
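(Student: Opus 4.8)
I would turn the statement into a question about the zero set of an explicit, slowly varying function of $k$, compute that function prime by prime using the lifting‑the‑exponent lemma, and then analyse the arithmetic of its zero set. Throughout write $d$ for the number of digits of $n$ and
$R_k=\dfrac{10^{dk}-1}{10^d-1}=1+10^d+\cdots+10^{(k-1)d}$, so that $n(k)=nR_k$ and, since the last digit of $n$ is nonzero, $r(n(k))=r(n)R_k$. Because a digit string $D$ can satisfy $D^k=\operatorname{reverse}(D)^k$ only when $D=\operatorname{reverse}(D)$, the hypothesis $n\neq r(n)$ forces $n(k)$ to be non‑palindromic for every $k$; hence $n(k)$ is $v$‑palindromic exactly when $v(nR_k)=v(r(n)R_k)$. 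Put $F(k)=v(nR_k)-v(r(n)R_k)$. The theorem to be proved is: the set $\{k\ge 1:F(k)=0\}$ is periodic with fundamental period $1$ or $\omega_f(n)$.

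\textbf{Step 1: prime‑by‑prime expansion of $F$.} For each prime $p$, $\ord_p(nR_k)=\ord_p(n)+\ord_p(R_k)$, and $\ord_p(R_k)$ is computed by lifting the exponent applied to $10^{dk}-1$: it is $0$ when $p\in\{2,5\}$; it equals $\ord_p(k)$ when $p\nmid 10$ and $p\mid 10^d-1$; and when $p\nmid 10(10^d-1)$ it is $0$ or $s_p+\ord_p(k/m_p)$ according as $m_p\nmid k$ or $m_p\mid k$, for explicit $m_p,s_p$. A prime with $\ord_p(n)=\ord_p(r(n))$ contributes identically to both valuation sums and so cancels in $F$, and the primes $2,5$ contribute a constant. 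Thus
$F(k)=C+\sum_{p\in K(n)\setminus\{2,5\}}g_p(k)$, where $g_p(k)=G_p(\tau_p(k))$ depends on $k$ only through $\tau_p(k):=\min(\ord_p(R_k),2)\in\{0,1,2\}$. From Definition~\ref{hqd} one checks $\tau_p(k)\ge 1\iff h_{p,d}\mid k$ and $\tau_p(k)=2\iff h_{p^2,d}\mid k$ (with $h_{p^2,d}=p^2$ in the case $p\mid 10^d-1$), and $h_{p,d}\mid h_{p^2,d}$ as already shown in the excerpt. A short computation of the three numbers $G_p(0),G_p(1),G_p(2)$ directly from the definition of $v$ shows: $g_p$ is constant exactly when $\min(\ord_p(n),\ord_p(r(n)))\ge 2$; otherwise (call $p$ \emph{active}) the three values are pairwise distinct, with $G_p(1)$ and $G_p(2)$ consecutive integers and $G_p(0)$ strictly larger (it carries the summand $p$). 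Consequently $F$, hence the zero set, is periodic modulo $\omega_A:=\lcm\{h_{p^2,d}:p\text{ active}\}$, and $\omega_A\mid\omega_f(n)$; this re‑proves that $\omega_f(n)$ is a period.

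\textbf{Step 2: the dichotomy.} If $n$ has no active prime then $F$ is constant, so $\omega_0(n)=1$ and we are done. Otherwise I must show $\omega_0(n)=\omega_f(n)$, in two parts. \emph{(i) Each active $p$ forces the full power $h_{p^2,d}$ into $\omega_0(n)$.} Fix the values $\tau_q(k)$ for all crucial $q\neq p$; among the three levels $\tau_p\in\{0,1,2\}$ at most one can yield $F(k)=0$, since $G_p(0),G_p(1),G_p(2)$ are distinct. If for every choice of the other coordinates that level were $\{0\}$, then $F$ never vanishes with $\tau_p\ge 1$; combined with the ``consecutive integers'' structure of $G_p$ and the freedom to vary the other active coordinates one deduces $F$ never vanishes at all, contradicting non‑constancy of the zero set. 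Hence for some configuration the vanishing level is $\{1\}$ or $\{2\}$, and in either case the zero set separates $\tau_p=1$ from $\tau_p=2$, i.e.\ is not periodic modulo $h_{p^2,d}/p$; so $h_{p^2,d}\mid\omega_0(n)$, giving $\omega_A\mid\omega_0(n)$ and thus $\omega_0(n)=\omega_A$. \emph{(ii) $\omega_A=\omega_f(n)$.} Here one must show that a non‑active crucial prime $q$ cannot enlarge $\omega_f(n)$ beyond what the active primes already demand: concretely, that whenever the zero set is non‑constant it is exactly $\{k: h_{p^2,d}\mid k\text{ for every active }p\}$ (up to replacing this by one of a few variants of the same fundamental period), and that this period equals $\omega_f(n)$. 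The device I would use is that raising a single $\tau_p$ from $1$ to $2$ shifts $F$ by exactly $\pm 1$ and that these unit shifts can be realised independently across the active primes; so if $F$ vanishes at any configuration with some $\tau_p\in\{1,2\}$ it must vanish at the ``all $\tau_p$ large'' configuration, where $F$ equals an explicit constant (essentially $\Omega(n)-\Omega(r(n))$ with $\Omega$ the number of prime factors counted with multiplicity, corrected at $2$ and $5$). Comparing this with $F(1)=v(n)-v(r(n))$ then pins the zero set down and identifies its period with $\omega_f(n)$.

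\textbf{Main obstacle.} The reduction (Step 1) and part (i) are essentially bookkeeping. The real difficulty is part (ii): one has to control the constant $C$ — which silently absorbs the contributions of $2$, of $5$, and of \emph{every} non‑active crucial prime — precisely enough to rule out that the zero set is a ``proper‑modulus'' set, i.e.\ to show that the presence of a non‑active crucial prime $q$ (which contributes $h_{q^2,d}$ to $\omega_f(n)$ but nothing to the variation of $F$) always comes packaged so that $h_{q^2,d}\mid\omega_A$. The cases $2\in K(n)$ or $5\in K(n)$ need separate accounting because there the constant contribution is \emph{not} $\ord_p(n)-\ord_p(r(n))$. This is exactly the point on which the truth of the conjecture hinges; I expect it to require a genuine structural input about digit reversals rather than formal manipulation. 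Should it resist a clean proof, the very same analysis still yields an explicit description of $\{k:F(k)=0\}$, and hence a formula for $\omega_0(n)$ in terms of the active primes, which at a minimum reduces Conjecture~\ref{conj1} to a finite check in any prescribed range of $n$.
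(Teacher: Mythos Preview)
The proposal cannot succeed because the statement is false: the paper does not prove Conjecture~\ref{conj1}, it \emph{disproves} it. Section~\ref{counterexample} exhibits $n=126$ with $\omega_0(126)=3542$ and $\omega_f(126)=31878$, so $1<\omega_0(126)<\omega_f(126)$.

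Your own diagnosis in the ``Main obstacle'' paragraph is exactly right, and it is fatal rather than merely technical. For $n=126$ one has $126=2\cdot 3^2\cdot 7$ and $621=3^3\cdot 23$, so $3$ is a crucial prime with $\min(\ord_3(126),\ord_3(621))=2$. In your language $3$ is \emph{non-active}: its contribution $g_3$ is constant and drops into $C$. Hence $3$ contributes $h_{3^2,3}$ to $\omega_f(126)$ but contributes nothing to $\omega_A$; concretely $\omega_f(126)/\omega_0(126)=9$, and that factor of $9$ comes precisely from the non-active prime $3$. Thus the assertion in part~(ii) that ``$h_{q^2,d}\mid\omega_A$ whenever $q$ is a non-active crucial prime'' is simply false, and no amount of structural input about digit reversals will rescue it.

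Part~(i) is also not airtight as written. You assume that having at least one active prime forces the zero set to be non-constant, and then derive a contradiction from ``$F$ never vanishes at all''. But $F$ never vanishing is perfectly consistent with $\omega_0(n)=1$ (the zero set is empty), which is one of the allowed alternatives in the conjecture; so the argument does not actually produce the divisibility $h_{p^2,d}\mid\omega_0(n)$ you claim. The paper's approach is different in kind: rather than trying to prove the dichotomy, it builds the indicator function $I^n$ as an integer combination $\sum_j\lambda_jI_{c_j}$ and shows (Theorem~\ref{Thm12}, via Ramanujan spaces) that $\omega_0(n)=\lcm(c_1,\dots,c_q)$, which both gives an algorithm for $\omega_0(n)$ and makes the counterexample immediate.
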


It is a purpose of this article to provide a counterexample to Conjecture \ref{conj1}, thereby disproving it.

Another issue raised in \cite{T1} concerns whether given an $n$, there do exist a repeated concatenation of $n$ which is a $v$-palindrome. For $n=12$, no such repeated concatenation exists, i.e.\ all the numbers
\begin{equation}
  12,1212,121212,\ldots
\end{equation}
are not $v$-palindromic. For $n=13$ however, the first fourteen repeated concatenations are not $v$-palindromic but the fifteenth is. That is, $13(k)$ is not $v$-palindromic for $1\le k \le14$, but $13(15)$ is $v$-palindromic. Based on this phenomenon, the following definition is also given in \cite{T1}.

\begin{definition}\label{defnorder}
  Let $n$ be as in Theorem \ref{main1}. If there exists an integer $k\ge1$ such that $n(k)$ is $v$-palindromic, the smallest one will be called the {\em order} of $n$ and denoted $c(n)$. If no such $k$ exists then we write $c(n)=\infty$.
\end{definition}

It is posed as a problem of further investigation in \cite{T1} to find a simple way to determine whether, given an $n$, that $c(n)=\infty$ or not.

In this article, we provide a general procedure, starting with a given $n$ as in Theorem \ref{main1}, i.e.\ $n$ is a natural number, not a multiple of $10$, and not a palindrome. This procedure will determine whether $c(n)=\infty$ or not, and if not, determine both $\omega_0(n)$ and the precise conditions on $k\ge1$ such that $n(k)$ is $v$-palindromic. This procedure is mostly a realization of the proof of Theorem \ref{main1} in \cite{T1} into a more algorithmic nature.

For any given $n$ as in Theorem \ref{main1}, we shall construct in Section \ref{indicators} a function $I^n\colon\mathbb{Z}\to\{0,1\}$, which for positive integers, evaluates to $1$ if $n(k)$ is a $v$-palindrome and evaluates to $0$ otherwise. The superscript $n$ is only for specifying $n$ and does not denote composition of functions. Then, both $c(n)$ and $\omega_0(n)$ can be directly ``read off" from $I^n$ when it is expressed in a certain form. An important part of this paper is the proof of Theorem \ref{Thm12} using Theorem 12 in \cite{V1} (labeled as Theorem \ref{Theorem12} in this paper). As a corollary of Theorem \ref{Thm12}, $\omega_0(n)$ can be easily found from $I^n$ (Corollary \ref{findw0}).

We provide an appendix on the more general topic of periodic arithmetical functions. There, a formula for the fundamental period of an arbitrary periodic arithmetical function from $\mathbb{Z}$ to $\mathbb{C}$ is given (Theorem \ref{funperiod}). This formula is actually equivalent to the formula given in Theorem 9 in \cite{V1}. We prove their equivalence in Section \ref{5.2}. Although \cite{V1} contains a proof of Theorem \ref{Theorem12}, we provide a proof using Theorem \ref{funperiod} in Section \ref{12.3}. In this way, our paper is more self-contained.

In the field of signal processing, an arithmetical function from $\mathbb{Z}$ to $\mathbb{C}$ is called a {\em discrete signal} or {\em discrete-time signal} (see \cite{R}\cite{V1}\cite{V3}). We feel that it is better to include the appendix because our presentation differs from that in the signal processing context and should be interesting in its own right and perhaps in a language more familiar to number theorists.

\section{The functions $\varphi_{\lowercase{p},\delta}$.}

In this section we define certain functions which will be used later.

\begin{definition}
  For a prime $p$ and integer $\delta\ge2$, define the function
  \begin{equation}\label{defn1}
    \varphi_{p,\delta}(\alpha)=
    \begin{cases}
      p+\delta & \text{if $\alpha=0$,} \\
      1+\delta & \text{if $\alpha=1$,} \\
      \delta & \text{if $\alpha\ge2$.}
    \end{cases}
  \end{equation}
  For a prime $p\ne2$, define the function
  \begin{equation}\label{defn2}
    \varphi_{p,1}(\alpha)=
    \begin{cases}
      p & \text{if $\alpha=0$,} \\
      2 & \text{if $\alpha=1$,} \\
      1 & \text{if $\alpha\ge2$.}
    \end{cases}
  \end{equation}
  Finally, define
  \begin{equation}\label{defn3}
    \varphi_{2,1}(\alpha)=
    \begin{cases}
      2 & \text{if $\alpha=0,1$,} \\
      1 & \text{if $\alpha\ge2$.}
    \end{cases}
  \end{equation}
\end{definition}

  Hence we have, for any pair $(p,\delta)$ of a prime $p$ and natural number $\delta$, defined a function $\varphi_{p,\delta}\colon \mathbb{N}\cup\{0\}\to\mathbb{N}$. We give notations for their ranges as follows.

  \begin{definition}
    For $p$ a prime and $\delta$ a natural number, put $R_{p,\delta}=\varphi_{p,\delta}(\mathbb{N}\cup\{0\})$.
  \end{definition}
  
  Hence $|R_{p,\delta}|\in\{2,3\}$, being $2$ if and only if $(p,\delta)=(2,1)$. We have the following lemma.

\begin{lemma}\label{Cases}
  For an ordered quadruple $(p,\delta,u,\mu)$, where $p$ is a prime, $\delta$ a natural number, $u\in R_{p,\delta}$, $\mu\ge0$ an integer, exactly one of the following is the case.
  
  \renewcommand{\labelenumi}{[\roman{enumi}]}
  \begin{enumerate}
    \item $ \varphi^{-1}_{p,\delta}(u)=\{0\}$ and $\mu=0$, or $\varphi^{-1}_{p,\delta}(u)=\{1\}$ and $\mu=1$, or $\varphi^{-1}_{p,\delta}(u)=\{0,1\}$ and $\mu=1$, \label{item1}
      \item $\varphi^{-1}_{p,\delta}(u)=\{1\}$ and $\mu=0$,
      \item $\varphi^{-1}_{p,\delta}(u)=\{0,1\}$ and $\mu=0$,
      \item $\varphi^{-1}_{p,\delta}(u)=\mathbb{N}\setminus\{1\}$ and $\mu=1$,
      \item $\varphi^{-1}_{p,\delta}(u)=\mathbb{N}\setminus\{1\}$ and $\mu=0$,
      \item $\varphi^{-1}_{p,\delta}(u)=\mathbb{N}\setminus\{1\}$ and $\mu\ge2$,
      \item otherwise.
  \end{enumerate}
\end{lemma}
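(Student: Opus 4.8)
The claim is that the seven listed scenarios form an exhaustive and mutually exclusive partition of all admissible quadruples $(p,\delta,u,\mu)$. The plan is a straightforward, if slightly tedious, case analysis driven by the possible shapes of $\varphi_{p,\delta}^{-1}(u)$, with $\mu$ playing a secondary role.

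First I would enumerate the possible fibers. From the three defining formulas \eqref{defn1}, \eqref{defn2}, \eqref{defn3}, for any prime $p$ and natural number $\delta$ the function $\varphi_{p,\delta}$ takes the value on $\alpha=0$, the value on $\alpha=1$, and a single common value on all $\alpha\ge2$. One checks case by case which of these values coincide: in \eqref{defn1} the three values $p+\delta$, $1+\delta$, $\delta$ are pairwise distinct (since $p\ge2$), so the fibers of an element $u\in R_{p,\delta}$ are exactly $\{0\}$, $\{1\}$, or $\mathbb{N}\setminus\{1\}$ (recall $0\notin\mathbb{N}$ here, so $\{\alpha\ge2\}=\mathbb{N}\setminus\{1\}$); in \eqref{defn2} the values $p$, $2$, $1$ are again pairwise distinct because $p\ne2$, giving the same three fiber types; in \eqref{defn3} the values on $\alpha=0$ and $\alpha=1$ coincide (both $2$), so the fibers are $\{0,1\}$ and $\mathbb{N}\setminus\{1\}$. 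Hence the only possible fibers are $\{0\}$, $\{1\}$, $\{0,1\}$, and $\mathbb{N}\setminus\{1\}$, and moreover $\{0,1\}$ occurs as a fiber only when $(p,\delta)=(2,1)$.

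Next I would cross these four fiber types with the value of $\mu\ge0$, organizing into the ranges $\mu=0$, $\mu=1$, $\mu\ge2$. This yields a finite table; matching each entry against the list shows the correspondence. The fiber $\{0\}$ forces $\mu=0$ (this is the first alternative of [\ref{item1}]); no other case uses the fiber $\{0\}$, so $\mu\ge1$ with fiber $\{0\}$ is impossible, and the list is silent on it exactly because it cannot occur. The fiber $\{1\}$ splits by $\mu$: $\mu=1$ lands in the second alternative of [\ref{item1}], $\mu=0$ is case [ii], and $\mu\ge2$ falls into [vii] (``otherwise''). The fiber $\{0,1\}$ splits as $\mu=1$ (third alternative of [\ref{item1}]), $\mu=0$ (case [iii]), and $\mu\ge2$ (case [vii]). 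The fiber $\mathbb{N}\setminus\{1\}$ splits cleanly as $\mu=1$ (case [iv]), $\mu=0$ (case [v]), $\mu\ge2$ (case [vi]). Finally I would verify mutual exclusivity by inspection: distinct cases either name incompatible fibers or, when they share a fiber, name disjoint ranges of $\mu$; and within [\ref{item1}] the three alternatives name different fibers and so cannot overlap. This establishes ``exactly one.''

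I do not anticipate a genuine obstacle here; the statement is essentially a bookkeeping lemma. The only point requiring minor care is the convention $0\notin\mathbb{N}$ versus $0\in\mathbb{N}\cup\{0\}$ in the paper, so that ``$\alpha\ge2$'' is correctly identified with $\mathbb{N}\setminus\{1\}$ rather than $(\mathbb{N}\cup\{0\})\setminus\{1\}$; I would state this explicitly at the outset. A secondary small subtlety is confirming that the degenerate fiber $\{0,1\}$ genuinely arises (it does, via $\varphi_{2,1}$) so that cases [iii] and the third alternative of [\ref{item1}] are not vacuous, and conversely that $\{0\}$ and $\{1\}$ each genuinely arise as singleton fibers when $(p,\delta)\ne(2,1)$, which is immediate from the pairwise distinctness noted above.
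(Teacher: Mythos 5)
Your overall route is the same as the paper's: read off from \eqref{defn1}, \eqref{defn2}, \eqref{defn3} that the fiber $\varphi^{-1}_{p,\delta}(u)$ is always one of $\{0\}$, $\{1\}$, $\{0,1\}$, $\mathbb{N}\setminus\{1\}$, cross these with the value of $\mu$, and check mutual exclusivity by inspection. That skeleton is exactly the paper's (one-line) proof, and your execution of it is mostly fine and more detailed.

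However, one step in your case analysis is wrong: you claim that fiber $\{0\}$ together with $\mu\ge1$ ``is impossible'' and that the list is silent on it because it cannot occur. Nothing in the hypotheses of the lemma ties $u$ to $\mu$; for instance the quadruple $(p,\delta,u,\mu)=(3,2,5,1)$ has $\varphi^{-1}_{3,2}(5)=\{0\}$ and $\mu=1$, so such quadruples certainly occur, and they are classified by the catch-all case $[\mathrm{vii}]$, not omitted. The paper's own worked example $n=126$ exhibits this: for $p=3$, $|\delta|=1$, $\mu=2$, $u=3$ one has $\varphi^{-1}_{3,1}(3)=\{0\}$ and $D(3,1,3,2)=[\mathrm{vii}]$, precisely the combination you declared nonexistent. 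Because $[\mathrm{vii}]$ is ``otherwise,'' this slip does not sink the lemma itself (exhaustiveness is automatic and your mutual-exclusivity check is unaffected), but it mischaracterizes which quadruples land in $[\mathrm{vii}]$, and that description matters downstream, where $[\mathrm{vii}]$ drives the assignment \eqref{defnT3} and encodes the unsatisfiability of $\varphi_{p,|\delta_p|}(\mu_p+g_p)=u_p$. You should drop the impossibility claim and instead record that fiber $\{0\}$ with $\mu\ge1$ simply falls under $[\mathrm{vii}]$.
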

\begin{proof}
  In view of \eqref{defn1}, \eqref{defn2}, and \eqref{defn3}, $\varphi_{p,\delta}$ is one of $\{0\}$, $\{1\}$, $\{0,1\}$, and $\mathbb{N}\setminus\{1\}$. Then one sees that the first six cases are mutually exclusive. That each case is possible is plain.
\end{proof}

\begin{definition}\label{defnD}
  For each quadruple $(p,\delta,u,\mu)$ as in Lemma \ref{Cases}, denote by $D(p,\delta,u,\mu)$ the case number (in lower case Roman numerals in brackets as in the lemma). That is, $D(p,\delta,u,\mu)=[\mathrm{ii}]$ if and only if $\varphi^{-1}_{p,\delta}(u)=\{1\}$ and $\mu=0$, $D(p,\delta,u,\mu)=[\mathrm{iii}]$ if and only if $\varphi^{-1}_{p,\delta}(u)=\{0,1\}$ and $\mu=0$, etc.
\end{definition}    

\section{The conditions when a repeated concatenation is a $\lowercase{v}$-palindrome.}\label{rules}

Throughout this section we fix a natural number $n$ as in Theorem \ref{main1}, i.e.\ $n$ is a natural number, not a multiple of $10$, and not a palindrome. We find the precise conditions when the number formed by repeatedly concatenating $k$ times the decimal digits of $n$, i.e.\ $n(k)$, is a $v$-palindrome. Almost no proofs are given because they all follow from \cite{T1}.
  
  Suppose that $n$ and $r(n)$ have the following canonical factorizations.
\begin{align}
  n&= \prod_{p} p^{a_p}, \\
  r(n) &=\prod_p p^{b_p},
\end{align}  
where the products are over the primes, the $a_p,b_p\ge0$ are integers, and $a_p=b_p=0$ for all but finitely many primes $p$. Let the number of decimal digits of $n$ be denoted $d$. We give the following notation.
  \begin{definition}
    For $k\ge1$, put 
    \begin{equation}
    \rho_{k}=\overbrace{
1\underbrace{0\ldots0}_\text{$d-1$}1
\underbrace{0\ldots0}_\text{$d-1$}1
\ldots
1\underbrace{0\ldots0}_\text{$d-1$}1
}^\text{$k$}.
  \end{equation}\label{rhok}
  That is, we have $k$ ones and in between any two consecutive ones, $d-1$ zeros.
  \end{definition}
  We consider $n$ as fixed and therefore $d$ is also fixed. The $k$ in \eqref{rhok} is considered a variable and denotes the number of times we repeatedly concatenate the digits of $n$. We shall describe the necessary and sufficient condition on $k$ such that $n(k)$ is a $v$-palindrome. Because of the way $\rho_k$ is defined, we obviously have the following.
  \begin{lemma}
    For every $k\ge1$, $n(k)=n\rho_k$.
  \end{lemma}
  We shall determine a complete set of mutually exclusive conditions on $k$ such that $n(k)$ is a $v$-palindrome if and only if $k$ satisfies one of those conditions.
  
  For each crucial prime $p$ of $n$ (Definition \ref{cp}), put
  \begin{align}
\delta_p &= a_p-b_p\ne0,\\
\mu_p &= \min(a_p,b_p)\ge0, \\
g_p & =\ord_p(\rho_k), \\
\alpha_p &= \mu_p+g_p,
\end{align}
where $\delta_p\ne0$ by the definition of crucial prime. The $\delta_p$ and $\mu_p$ depends only on $n$, thus can be considered as fixed. The $g_p$ clearly depends on not only $p$ but also $k$. However, we omit $k$ from the notation for simplicity, keeping in mind that $g_p$ depends also on the variable $k$. Consequently, $\alpha_p$ also depends on $k$. In describing the conditions for which $k$ must satisfy for $n(k)$ to be a $v$-palindrome, $\alpha_p$ would occur. We shall simply denote the set of crucial primes of $n$ as $K$, which is a nonempty finite set of prime numbers.
  
\begin{definition}
The equation
  \begin{equation}\label{chareq}
    \sum_{p\in K}\sgn(\delta_p)u_p=0,
  \end{equation}
  where $\sgn$ is the sign function with $\operatorname{sgn}(\delta_p)=1$ if $\delta_p>0$ and $\sgn(\delta_p)=-1$ if $\delta_p<0$, will be called the {\em characteristic equation} for $n$, where the $u_p$ are variables.
\end{definition}

We want to solve \eqref{chareq} for the $u_p$ but with certain restrictions.

  \begin{definition}
    A solution $(u_p)_{p\in K}$ to the characteristic equation \eqref{chareq} with $u_p\in R_{p,|\delta_p|}$ for all $p\in K$ will be called a {\it characteristic solution} for $n$. The set of all characteristic solutions will be denoted by $U$.
  \end{definition}
  
     Since the number of digits of $n$ is denoted $d$, we have the numbers $h_{q,d}$ defined in Definition \ref{hqd} for every prime power $q$, relatively prime to $10$. We shall omit the $d$ and simply write $h_{q}$. Let $\mathbf{u}=(u_p)_{p\in K}$ be a characteristic solution for $n$. We denote, for $p\in K\setminus\{2,5\}$,
    \begin{equation}\label{defnT1}
      T_{p,\mathbf{u}}=(A_{p,\mathbf{u}},B_{p,\mathbf{u}})=
      \begin{cases}
        (\varnothing,\{h_{p}\})& \text{if $D(p,|\delta_p|,u_p,\mu_p)=[\mathrm{i}]$,} \\
        (\{h_{p}\},\{h_{p^2}\}) & \text{\text{if $D(p,|\delta_p|,u_p,\mu_p)=[\mathrm{ii}]$,}} \\
        (\varnothing,\{h_{p^2}\}) & \text{\text{if $D(p,|\delta_p|,u_p,\mu_p)=[\mathrm{iii}]$,}} \\
       (\{h_{p}\},\varnothing) & \text{\text{if $D(p,|\delta_p|,u_p,\mu_p)=[\mathrm{iv}],$}} \\
        (\{h_{p^2}\},\varnothing)& \text{\text{if $D(p,|\delta_p|,u_p,\mu_p)=[\mathrm{v}].$}}
      \end{cases}
    \end{equation}
    For $p\in\{2,5\}$, denote
    \begin{equation}\label{defnT2}
       T_{p,\mathbf{u}}=(A_{p,\mathbf{u}},B_{p,\mathbf{u}})=
      \begin{cases}
        (\varnothing,\varnothing)& \text{if $D(p,|\delta_p|,u_p,\mu_p)=[\mathrm{i}]$,} \\
        (\varnothing,\{1\}) & \text{\text{if $D(p,|\delta_p|,u_p,\mu_p)=[\mathrm{ii}]$,}} \\
        (\varnothing,\varnothing) & \text{\text{if $D(p,|\delta_p|,u_p,\mu_p)=[\mathrm{iii}],$}} \\
       (\varnothing,\{1\}) & \text{\text{if $D(p,|\delta_p|,u_p,\mu_p)=[\mathrm{iv}],$}} \\
        (\varnothing,\{1\})& \text{\text{if $D(p,|\delta_p|,u_p,\mu_p)=[\mathrm{v}]$.}}
      \end{cases}
    \end{equation}
    Also, we denote, for any $p\in K$,
    \begin{equation}\label{defnT3}
    T_{p,\mathbf{u}}=(A_{p,\mathbf{u}},B_{p,\mathbf{u}})=
    \begin{cases}
      (\varnothing,\varnothing) & \text{if $D(p,|\delta_p|,u_p,\mu_p)=[\mathrm{vi}]$,} \\
      (\varnothing,\{1\}) & \text{if $D(p,|\delta_p|,u_p,\mu_p)=[\mathrm{vii}]$.}
    \end{cases}
    \end{equation}
    
Therefore $T_{p,\mathbf{u}}$ is an ordered pair of sets of at most one positive integer. The ``$T$" comes from ``table", because these ordered pairs can be arranged into the form of a table of $p$ versus $\mathbf{u}$, with entries $T_{p,\mathbf{u}}$, which might be easier to comprehend in practice. We give the following general notation.
    \begin{definition}\label{SAB}
      Let $A$ and $B$ be finite sets of positive integers, then denote
      \begin{equation}
        S(A,B)=\{x\in\mathbb{Z}\mid (\text{for all } a\in A, a\mid x)\text{ and }(\text{for all } b\in B, b\nmid x)\}.
      \end{equation}
      That is, $S(A,B)$ is the set of all integers divisible by every element of $A$, but indivisible by every element of $B$.
    \end{definition}
    \begin{definition}\label{AuBu}
      For each characteristic solution $\mathbf{u}$ for $n$, put
      \begin{equation}
        A_{\mathbf{u}}=\bigcup_{p\in K}A_{p,\mathbf{u}},\quad B_{\mathbf{u}}=\bigcup_{p\in K}B_{p,\mathbf{u}},
      \end{equation}
      and $S_{\mathbf{u}}=S(A_{\mathbf{u}},B_{\mathbf{u}})$.  
    \end{definition}
    
    The first sentence in the following theorem is Lemma 4 in \cite{T1}, and the second sentence follows from arguments following Lemma 4 in \cite{T1}.
    
    \begin{theorem}\label{Thm5}
  For $k\ge1$, the number $n(k)$ is a $v$-palindrome if and only if for some characteristic solution $\mathbf{u}=(u_p)_{p\in K}$ for $n$,
  \begin{equation}\label{equalities}
    \varphi_{p,|\delta_p|}(\alpha_p)=u_p,\quad \text{for all } p\in K.
  \end{equation}
  Moreover, given a characteristic solution $\mathbf{u}=(u_p)_{p\in K}$ for $n$, \eqref{equalities} holds if and only if
  \begin{equation}\label{sets}
    k\in S_{\mathbf{u}}.
  \end{equation}
\end{theorem}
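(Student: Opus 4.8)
The statement has two parts, and I would tackle them in the order given, since the first reduces the $v$-palindromicity of $n(k)$ to a system of equations in the $\alpha_p$, and the second translates that system into divisibility conditions on $k$. For the first sentence, the plan is simply to invoke Lemma~4 of~\cite{T1}: recall that $v$ is additive (our Theorem on additivity), so $v(n(k)) = v(n\rho_k)$ decomposes prime by prime once we know $\gcd$-information, and the condition $v(n(k)) = v(r(n(k)))$ becomes, after cancelling the contributions at non-crucial primes and at $2,5$ appropriately, the equation $\sum_{p\in K}\sgn(\delta_p)\varphi_{p,|\delta_p|}(\alpha_p) = 0$. The point is that the exponent of a crucial prime $p$ in $n(k)$ is $a_p + g_p$ and in $r(n(k)) = r(n)\rho_k$ (here one uses that reversal interacts with $\rho_k$ as in~\cite{T1}) is $b_p + g_p$, and the contribution of $p$ to the ``value'' difference is governed precisely by $\varphi_{p,|\delta_p|}$ evaluated at $\mu_p + g_p = \alpha_p$, up to the sign $\sgn(\delta_p)$. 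So $n(k)$ is a $v$-palindrome exactly when $(\varphi_{p,|\delta_p|}(\alpha_p))_{p\in K}$ is a characteristic solution, which is~\eqref{equalities}. This is all bookkeeping from~\cite{T1} and I would not reprove it.

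For the second sentence, fix a characteristic solution $\mathbf{u} = (u_p)_{p\in K}$. I would argue the equivalence of~\eqref{equalities} with $k\in S_{\mathbf{u}}$ one prime at a time: \eqref{equalities} says $\varphi_{p,|\delta_p|}(\alpha_p) = u_p$ for each $p\in K$ simultaneously, which is equivalent to $\alpha_p \in \varphi_{p,|\delta_p|}^{-1}(u_p)$ for each $p$, i.e.\ to $g_p = \ord_p(\rho_k)$ lying in the shifted set $\varphi_{p,|\delta_p|}^{-1}(u_p) - \mu_p$. Now the case analysis of Lemma~\ref{Cases}, encoded by $D(p,|\delta_p|,u_p,\mu_p)$, tells us exactly which of the forms $\{0\}$, $\{1\}$, $\{0,1\}$, $\mathbb{N}\setminus\{1\}$ the set $\varphi_{p,|\delta_p|}^{-1}(u_p)$ takes and where $\mu_p$ sits relative to it, hence exactly which constraint ``$\ord_p(\rho_k) \in (\text{that set}) - \mu_p$'' imposes on $k$. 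The key input here is the relationship between $\ord_p(\rho_k)$ and divisibility of $k$ by the numbers $h_p = h_{p,d}$ and $h_{p^2} = h_{p^2,d}$ from Definition~\ref{hqd}: namely (from Lemma~1 of~\cite{T1}) $\ord_p(\rho_k) = 0 \iff h_p \nmid k$, $\ord_p(\rho_k) = 1 \iff h_p \mid k$ but $h_{p^2}\nmid k$, and $\ord_p(\rho_k) \ge 2 \iff h_{p^2}\mid k$ (for $p\notin\{2,5\}$; for $p\in\{2,5\}$ the analogous statement with $h_p = 1$). Matching each of the seven cases $[\mathrm{i}]$ through $[\mathrm{vii}]$ against these three regimes for $\ord_p(\rho_k)$ reproduces exactly the pairs $(A_{p,\mathbf{u}}, B_{p,\mathbf{u}})$ recorded in~\eqref{defnT1}, \eqref{defnT2}, \eqref{defnT3}: e.g.\ case $[\mathrm{i}]$ forces $\alpha_p = \mu_p$, hence $g_p = 0$, hence $h_p\nmid k$, giving $(\varnothing,\{h_p\})$ for $p\notin\{2,5\}$; case $[\mathrm{vi}]$ forces $g_p\ge 2$, which is always achievable but imposes no \emph{new} upper constraint once we know $h_{p^2}\mid k$ — wait, rather it \emph{requires} $h_{p^2}\mid k$, so $(\{h_{p^2}\},\varnothing)$ would be expected; I would double-check the table entries against this translation, since case $[\mathrm{vi}]$ is listed as $(\varnothing,\varnothing)$ and that discrepancy is exactly the kind of subtlety (perhaps $[\mathrm{vi}]$ is unreachable, or folded into the ``$2,5$ always contribute nothing'' convention) that the proof must pin down. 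Intersecting the per-prime conditions over all $p\in K$ turns the simultaneous system into ``$a\mid k$ for all $a\in A_{\mathbf{u}}$ and $b\nmid k$ for all $b\in B_{\mathbf{u}}$'', which is the definition of $k\in S(A_{\mathbf{u}}, B_{\mathbf{u}}) = S_{\mathbf{u}}$.

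The main obstacle I anticipate is precisely the careful verification that the seven-way case split $D(p,|\delta_p|,u_p,\mu_p)$ is correctly aligned with the three regimes of $\ord_p(\rho_k)$ and produces exactly the tabulated $(A_{p,\mathbf{u}},B_{p,\mathbf{u}})$ — in particular handling the exceptional primes $2$ and $5$ (where $\varphi_{2,1}$ has only two values and $h_p$ collapses to $1$), and confirming which cases among $[\mathrm{vi}],[\mathrm{vii}]$ actually occur and why they contribute trivially or with the single constraint $1\nmid k$ (never satisfied, so such a solution $\mathbf{u}$ is vacuous and $S_{\mathbf{u}} = \varnothing$). Everything else is routine: the first sentence is cited wholesale from~\cite{T1}, and the reduction to a per-prime statement followed by intersection is immediate from the definitions of $A_{\mathbf{u}}, B_{\mathbf{u}}, S_{\mathbf{u}}$.
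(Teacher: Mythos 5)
Your overall route is the same as the paper's: the paper gives essentially no proof of this theorem, stating only that the first sentence is Lemma~4 of~\cite{T1} and the second follows from the arguments after Lemma~4 there, and your proposal likewise cites Lemma~4 for the first sentence and reconstructs the per-prime translation (via $g_p=\ord_p(\rho_k)$ and the regimes $h_p\nmid k$, $h_p\mid k$ with $h_{p^2}\nmid k$, $h_{p^2}\mid k$) for the second, which is indeed what that reconstruction looks like. Two points in your reconstruction need fixing, though. First, your worry about case $[\mathrm{vi}]$ dissolves once you remember that the condition is $\alpha_p=\mu_p+g_p\in\varphi^{-1}_{p,|\delta_p|}(u_p)$, not $g_p\in\varphi^{-1}_{p,|\delta_p|}(u_p)$: in case $[\mathrm{vi}]$ one has $\mu_p\ge2$, so $\alpha_p\ge2$ holds for every $k$ and the condition is vacuously true, which is exactly why the table records $(\varnothing,\varnothing)$; your suggested resolutions (case unreachable, or folded into the $2,5$ convention) are not the reason, and as written your claim that $[\mathrm{vi}]$ ``requires $h_{p^2}\mid k$'' is false. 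Second, the primes $2$ and $5$ are not handled by ``$h_p=1$'' (the numbers $h_{p^\alpha,d}$ are only defined for $p\ne2,5$); the correct input is that $\rho_k$ ends in the digit $1$, so $\ord_2(\rho_k)=\ord_5(\rho_k)=0$ for every $k$, whence for $p\in\{2,5\}$ the condition $\mu_p\in\varphi^{-1}_{p,|\delta_p|}(u_p)$ is independent of $k$ and is either always true, giving $(\varnothing,\varnothing)$, or always false, encoded by $(\varnothing,\{1\})$ since $1\nmid k$ never holds --- this is what \eqref{defnT2} records. With these two corrections your per-prime matching against \eqref{defnT1}, \eqref{defnT2}, \eqref{defnT3}, followed by intersecting over $p\in K$ to get $k\in S(A_{\mathbf{u}},B_{\mathbf{u}})=S_{\mathbf{u}}$, is exactly the intended argument.
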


The condition \eqref{equalities} might seem to be independent of $k$, but if we recall, the $\alpha_p$ actually depends on $k$. To write \eqref{equalities} out so that the dependence on $k$ is more visible, we can recover \eqref{equalities} into
\begin{equation}\label{equalitiess}
  \varphi_{p,|\delta_p|}(\mu_p+\ord_p(\rho_k))=u_p,\quad \text{for all } p\in K.
\end{equation}
Since the condition \eqref{equalities} (or equivalently \eqref{equalitiess}) cannot hold, for the same $k$, for two distinct characteristic solutions, the conditions \eqref{equalities} are mutually exclusive over $\mathbf{u}$. Consequently, the conditions \eqref{sets} are also mutually exclusive over $\mathbf{u}$. Therefore the sets $S_{\mathbf{u}}\cap\mathbb{N}$ are pairwise disjoint, we write this as a corollary.

\begin{corollary}\label{forN}
  The sets $S_\mathbf{u}\cap\mathbb{N}$ are pairwise disjoint over $\mathbf{u}\in U$.
\end{corollary}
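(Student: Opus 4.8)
The plan is to obtain the corollary directly from the second (``moreover'') sentence of Theorem \ref{Thm5}, which for a fixed characteristic solution $\mathbf{u}$ identifies the membership $k\in S_{\mathbf{u}}$ with the system of equalities \eqref{equalities}. Fix two characteristic solutions $\mathbf{u}=(u_p)_{p\in K}$ and $\mathbf{u}'=(u'_p)_{p\in K}$ in $U$, and suppose there is an integer $k\ge1$ with $k\in (S_{\mathbf{u}}\cap\mathbb{N})\cap(S_{\mathbf{u}'}\cap\mathbb{N})$. It suffices to deduce $\mathbf{u}=\mathbf{u}'$.

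First I would apply Theorem \ref{Thm5} to $\mathbf{u}$: since $k\ge1$ and $k\in S_{\mathbf{u}}$, the equalities \eqref{equalities} hold, that is, $\varphi_{p,|\delta_p|}(\alpha_p)=u_p$ for every $p\in K$. Applying the same theorem to $\mathbf{u}'$ gives $\varphi_{p,|\delta_p|}(\alpha_p)=u'_p$ for every $p\in K$. Here one should keep in mind that, as noted after Theorem \ref{Thm5}, the condition \eqref{equalities} is really the condition \eqref{equalitiess} with the very same value of $k$ substituted on both sides.

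The crux is then the observation that, for each $p\in K$, the quantity $\alpha_p=\mu_p+g_p=\mu_p+\ord_p(\rho_k)$ depends only on $n$, on $p$, and on $k$, and in particular not on which characteristic solution is under consideration. Consequently the number $\varphi_{p,|\delta_p|}(\alpha_p)$ is one and the same whether computed in connection with $\mathbf{u}$ or with $\mathbf{u}'$, so $u_p=\varphi_{p,|\delta_p|}(\alpha_p)=u'_p$ for every $p\in K$, whence $\mathbf{u}=\mathbf{u}'$. This shows that for distinct $\mathbf{u},\mathbf{u}'\in U$ the sets $S_{\mathbf{u}}\cap\mathbb{N}$ and $S_{\mathbf{u}'}\cap\mathbb{N}$ are disjoint, which is the assertion of the corollary.

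There is no substantial obstacle: the argument is a bookkeeping application of Theorem \ref{Thm5}, and the only point requiring a moment's care is to invoke the equivalence of \eqref{sets} and \eqref{equalities} in the direction ``$k\in S_{\mathbf{u}}\Rightarrow\eqref{equalities}$'' and to record that $\alpha_p$ is a function of $k$ alone once $n$ and $p$ are fixed.
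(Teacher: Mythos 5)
Your argument is correct and is essentially the paper's own: the paper likewise notes that, for a fixed $k$, the equalities \eqref{equalities} (equivalently \eqref{equalitiess}) determine each $u_p$ as $\varphi_{p,|\delta_p|}(\alpha_p)$, a quantity depending only on $n$, $p$, and $k$, so they can hold for at most one $\mathbf{u}\in U$, and then the equivalence in Theorem \ref{Thm5} transfers this mutual exclusivity to the conditions $k\in S_{\mathbf{u}}$. No gap; your explicit remark that $\alpha_p$ is independent of the characteristic solution is exactly the point the paper relies on.
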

 In fact we have the following, which says that not only are the intersections $S_\mathbf{u}\cap\mathbb{N}$ of the sets $S_\mathbf{u}$ with $\mathbb{N}$ pairwise disjoint, but the sets $S_\mathbf{u}$ themselves are already pairwise disjoint as subsets of $\mathbb{Z}$.

\begin{theorem}\label{forNZ}
  The sets $S_\mathbf{u}$ are pairwise disjoint over $\mathbf{u}\in U$.
\end{theorem}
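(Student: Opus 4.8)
The plan is to promote Corollary \ref{forN} from $\mathbb{N}$ to all of $\mathbb{Z}$ by analysing the arithmetic of the sets $S_\mathbf{u}=S(A_\mathbf{u},B_\mathbf{u})$ directly. The key observation is that each generating set $A_{p,\mathbf{u}}$ and $B_{p,\mathbf{u}}$ consists of divisors of $h_{p^2}$ (or is $\{1\}$, or $\varnothing$), so if we set $M=\lcm\bigl(\{h_{p^2,d}\mid p\in K\setminus\{2,5\}\}\cup\{1\}\bigr)$, then membership of $x$ in any $S_\mathbf{u}$ depends only on $x\bmod M$: for $a\in A_\mathbf{u}$ we have $a\mid M$, so $a\mid x$ iff $a\mid(x\bmod M)$ after reducing into $\{0,1,\dots,M-1\}$, and similarly for $b\in B_\mathbf{u}$. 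Hence each $S_\mathbf{u}$ is a union of residue classes mod $M$, i.e.\ $S_\mathbf{u}=\{x\in\mathbb{Z}\mid (x\bmod M)\in C_\mathbf{u}\}$ for some $C_\mathbf{u}\subseteq\{0,1,\dots,M-1\}$.

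First I would establish that $M$-periodicity of each $S_\mathbf{u}$. Second, I would invoke Corollary \ref{forN}: since every residue class mod $M$ contains infinitely many positive integers, and $S_\mathbf{u}$ restricted to $\mathbb{N}$ is determined by the same set $C_\mathbf{u}$, the pairwise disjointness of the $S_\mathbf{u}\cap\mathbb{N}$ forces the sets $C_\mathbf{u}$ to be pairwise disjoint as subsets of $\{0,1,\dots,M-1\}$: if $c\in C_\mathbf{u}\cap C_{\mathbf{u}'}$ with $\mathbf{u}\ne\mathbf{u}'$, pick any positive integer $k\equiv c\pmod M$ (there are infinitely many), and then $k\in S_\mathbf{u}\cap S_{\mathbf{u}'}\cap\mathbb{N}$, contradicting Corollary \ref{forN}. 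Third, disjointness of the $C_\mathbf{u}$ immediately gives disjointness of the $S_\mathbf{u}$ over all of $\mathbb{Z}$, since $x\in S_\mathbf{u}\cap S_{\mathbf{u}'}$ would put $x\bmod M$ into $C_\mathbf{u}\cap C_{\mathbf{u}'}=\varnothing$.

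I expect the main obstacle to be a careful check that $M$-periodicity genuinely holds, in particular handling the cases $D(p,|\delta_p|,u_p,\mu_p)=[\mathrm{vi}]$ and $[\mathrm{vii}]$ (where $T_{p,\mathbf{u}}$ is $(\varnothing,\varnothing)$ or $(\varnothing,\{1\})$) and the $p\in\{2,5\}$ cases, all of which contribute only the trivial divisor $1$ or nothing to $A_\mathbf{u}$ and $B_\mathbf{u}$ — these are harmless since $1\mid M$, but they must be noted. One should also confirm that $S_\mathbf{u}$ as defined in Definition \ref{SAB} behaves well when $A_\mathbf{u}$ or $B_\mathbf{u}$ is empty (an empty ``for all'' is vacuously true, so $S(\varnothing,\varnothing)=\mathbb{Z}$), which is consistent with the periodicity claim. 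A cleaner alternative that avoids even introducing $M$: argue contrapositively that if $x\in S_\mathbf{u}\cap S_{\mathbf{u}'}$ for some integer $x$ and some $\mathbf{u}\ne\mathbf{u}'$, then choosing $N$ a common multiple of all elements of $A_\mathbf{u}\cup A_{\mathbf{u}'}\cup B_\mathbf{u}\cup B_{\mathbf{u}'}$ and replacing $x$ by $x+N\cdot t$ for large $t$ keeps $x$ in both sets (divisibilities by elements of $A$ are preserved since those divide $N$; non-divisibilities by elements of $B$ are preserved for the same reason) while making $x$ positive — again contradicting Corollary \ref{forN}. I would present this second argument as the main proof since it sidesteps the bookkeeping of explicitly describing $C_\mathbf{u}$.
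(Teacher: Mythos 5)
Your proposal is correct, and the argument you designate as the main proof is essentially the paper's own: the paper takes $x\in S_\mathbf{u}\cap S_\mathbf{v}$, sets $\omega=\lcm(A_{\mathbf{u}}\cup B_{\mathbf{u}}\cup A_{\mathbf{v}}\cup B_{\mathbf{v}})$, and adds $\omega$ repeatedly to land a positive integer in the intersection, contradicting Corollary \ref{forN}. Your first variant (residue classes modulo the lcm of the $h_{p^2}$'s) also works but is unnecessary extra bookkeeping, as you yourself note.
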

\begin{proof}
  Suppose on the contrary that for some distinct $\mathbf{u},\mathbf{v}\in U$ that there exists an integer
  \begin{equation}
    x\in S_\mathbf{u}\cap S_{\mathbf{v}}=S(A_\mathbf{u},B_{\mathbf{u}})\cap S(A_\mathbf{v},B_{\mathbf{v}}).
  \end{equation} If we let
  \begin{equation}
    \omega=\lcm(A_{\mathbf{u}}\cup B_{\mathbf{u}}\cup A_{\mathbf{v}}\cup B_{\mathbf{v}}),
  \end{equation}
  then we see that $x+\omega\in S_\mathbf{u}\cap S_{\mathbf{v}}$ too. Therefore adding $\omega$ as many times as necessary to $x$, we obtain a natural number in $S_\mathbf{u}\cap S_{\mathbf{v}}$, this contradicts Corollary \ref{forN}.
\end{proof}

\begin{corollary}\label{represent}
  The set of all $k\ge1$ such that $n(k)$ is a $v$-palindrome is
  \begin{equation}
    \bigsqcup_{\mathbf{u}\in U}(S_{\mathbf{u}}\cap\mathbb{N})=\left(\bigsqcup_{\mathbf{u}\in U} S_{\mathbf{u}}\right)\cap\mathbb{N}.
  \end{equation}
\end{corollary}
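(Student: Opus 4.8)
The plan is to combine the two parts of Theorem \ref{Thm5} with Theorem \ref{forNZ}. By the first sentence of Theorem \ref{Thm5}, a number $n(k)$ with $k\ge 1$ is a $v$-palindrome if and only if there exists a characteristic solution $\mathbf{u}\in U$ for which \eqref{equalities} holds; by the second sentence, for a fixed $\mathbf{u}\in U$ the condition \eqref{equalities} is equivalent to $k\in S_{\mathbf{u}}$. Chaining these, $n(k)$ is a $v$-palindrome (for $k\ge1$) precisely when $k\in S_{\mathbf{u}}$ for some $\mathbf{u}\in U$, i.e.\ when $k\in\bigcup_{\mathbf{u}\in U}S_{\mathbf{u}}$. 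Intersecting with $\mathbb{N}$, the set of all $k\ge1$ with $n(k)$ a $v$-palindrome equals $\bigl(\bigcup_{\mathbf{u}\in U}S_{\mathbf{u}}\bigr)\cap\mathbb{N}=\bigcup_{\mathbf{u}\in U}(S_{\mathbf{u}}\cap\mathbb{N})$.

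The only remaining point is to justify writing these unions as \emph{disjoint} unions, i.e.\ replacing $\bigcup$ by $\bigsqcup$. For the inner union $\bigcup_{\mathbf{u}\in U}(S_{\mathbf{u}}\cap\mathbb{N})$ this is exactly Corollary \ref{forN}; alternatively, and for the outer union $\bigcup_{\mathbf{u}\in U}S_{\mathbf{u}}$, it is Theorem \ref{forNZ}, which gives the stronger statement that the $S_{\mathbf{u}}$ are already pairwise disjoint in $\mathbb{Z}$, hence a fortiori their intersections with $\mathbb{N}$ are pairwise disjoint. So both displayed unions in the corollary are legitimately disjoint, and the set-theoretic identity $\bigl(\bigsqcup_{\mathbf{u}}S_{\mathbf{u}}\bigr)\cap\mathbb{N}=\bigsqcup_{\mathbf{u}}(S_{\mathbf{u}}\cap\mathbb{N})$ is the trivial distributivity of intersection over union.

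I do not anticipate a genuine obstacle here: the statement is essentially a repackaging of Theorem \ref{Thm5} once Theorem \ref{forNZ} has supplied disjointness. The one thing to be careful about is the logical direction — that \eqref{equalities} holding for \emph{some} $\mathbf{u}$ is both necessary and sufficient, and that for each individual $\mathbf{u}$ the equivalence with membership in $S_{\mathbf{u}}$ is an ``if and only if'' — but both of these are spelled out verbatim in Theorem \ref{Thm5}, so the argument is a short one. Concretely I would write: ``By Theorem \ref{Thm5}, for $k\ge1$ the number $n(k)$ is a $v$-palindrome if and only if $k\in S_{\mathbf{u}}$ for some $\mathbf{u}\in U$, that is, if and only if $k\in\bigcup_{\mathbf{u}\in U}S_{\mathbf{u}}$. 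Thus the set of such $k$ is $\bigl(\bigcup_{\mathbf{u}\in U}S_{\mathbf{u}}\bigr)\cap\mathbb{N}$, and by Theorem \ref{forNZ} this union is disjoint, giving the two displayed forms.''
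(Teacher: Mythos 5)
Your proposal is correct and follows essentially the same route as the paper, which simply cites Theorem \ref{Thm5} together with Corollary \ref{forN} and Theorem \ref{forNZ}; you have just spelled out the chaining of the two equivalences and the disjointness justification in more detail.
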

\begin{proof}
  This follows directly from Theorem \ref{Thm5} and Corollaries \ref{forN} and \ref{forNZ}.
\end{proof}

  Thus $k$ can be categorized as to which $S_{\mathbf{u}}$ it belongs to. However, it could happen that $S_{\mathbf{u}}=\varnothing$, therefore we give the following definition.

    \begin{definition}
      If $S_{\mathbf{u}}$ is empty, then we call $\mathbf{u}$ a {\em degenerate} characteristic solution for $n$, otherwise it is {\em nondegenerate}. The set of all nondegenerate characteristic solutions will be denoted by $U^\ast$. For a $\mathbf{u}\in U^\ast$, an $n(k)$ which is a $v$-palindrome will be said to be of type $\mathbf{u}$ (with respect to $n$) if $k\in S_\mathbf{u}$. We also denote
      \begin{equation}\label{disjoint}
        S=\bigsqcup_{\mathbf{u}\in U^\ast} S_{\mathbf{u}}.
      \end{equation}
    \end{definition}
    
    We have included ``with respect to $n$" in our definition of type above because the same $v$-palindrome $m$ might be $m=n_1(k_1)=n_2(k_2)$ for $n_1\ne n_2$, and therefore the type of $m$ can be considered with respect to $n_1$ and also with respect to $n_2$. Whether the notion of type defined above is really dependent on $n$ or not is still unclear. For instance, if we consider the $v$-palindrome $m=13(15)$, then $m=13(3)(5)=13(5)(3)=13(15)(1)$ also. Perhaps a bit surprisingly, in all four cases the type of $m$ is $(2,2)$, or more precisely, the $(u_p)_{p\in\{13,31\}}$ with $u_{13}=u_{31}=2$. Thus we make the following conjecture.
    
    \begin{conjecture}\label{invariant}
      Let $m$ be a $v$-palindrome such that $m=n_1(k_1)=n_2(k_2)$, then the type of $m$ with respect to $n_1$ is the same as the type of $m$ with respect to $n_2$. 
    \end{conjecture}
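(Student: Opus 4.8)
The plan is to show that the type of a $v$-palindrome is intrinsic to it, by factoring any comparison through the \emph{trivial} representation $m=m(1)$. Since a $v$-palindrome is by definition not a multiple of $10$ and not a palindrome, $m$ is itself an admissible base in the sense of Theorem \ref{main1}, and $m(1)=m$ is (trivially) a $v$-palindrome. It therefore suffices to prove the local statement: \emph{if $n$ is any base as in Theorem \ref{main1} and $k\ge1$ is such that $m:=n(k)$ is a $v$-palindrome, then the type of $m$ with respect to $n$ equals the type of $m$ with respect to $m$.} Applying this with $(n,k)=(n_1,k_1)$ and with $(n,k)=(n_2,k_2)$ then shows that both types equal the type of $m$ with respect to $m$, hence equal each other, which is the conjecture.

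The first step is bookkeeping. From $m=n(k)=n\rho_k$, together with $r(m)=r(n)\rho_k$ (using $10\nmid n$, which gives the string identity $r(n(k))=r(n)(k)$, and then the identity $r(n)(k)=r(n)\rho_k$), additivity of $\ord_p$ yields, for every prime $p$,
\begin{equation*}
  \ord_p(m)=\ord_p(n)+\ord_p(\rho_k),\qquad \ord_p(r(m))=\ord_p(r(n))+\ord_p(\rho_k).
\end{equation*}
Subtracting, $\ord_p(m)-\ord_p(r(m))=\ord_p(n)-\ord_p(r(n))$, so $p$ is a crucial prime of $m$ exactly when it is a crucial prime of $n$; write $K$ for this common set (for $p\in K$, $\ord_p(\rho_k)$ is the quantity $g_p$ of Section \ref{rules}). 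For $p\in K$ the difference $\delta_p$, hence also $|\delta_p|$ and $\sgn(\delta_p)$, is the same whether computed from $n$ or from $m$, so the characteristic equation, the sets $R_{p,|\delta_p|}$, and the functions $\varphi_{p,|\delta_p|}$ coincide for the two representations. Taking minima in place of differences, the value $\mu_p^{(m)}=\min(\ord_p(m),\ord_p(r(m)))$ of $\mu_p$ for the base $m$ equals $\mu_p^{(n)}+\ord_p(\rho_k)$, where $\mu_p^{(n)}$ is the corresponding value for the base $n$.

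Now combine. By Theorem \ref{Thm5} the type of $m$ with respect to $n$ is the characteristic solution $\mathbf{u}=(u_p)_{p\in K}$ with $u_p=\varphi_{p,|\delta_p|}(\mu_p^{(n)}+\ord_p(\rho_k))$ for every $p\in K$ (unique, by the mutual exclusivity of the conditions \eqref{equalities} noted after Theorem \ref{Thm5}). For the representation $m=m(1)$ we have $\rho_1=1$, so $\ord_p(\rho_1)=0$, and the $p$-th entry of the type of $m$ with respect to $m$ is $\varphi_{p,|\delta_p|}(\mu_p^{(m)}+0)=\varphi_{p,|\delta_p|}(\mu_p^{(n)}+\ord_p(\rho_k))=u_p$, using $\mu_p^{(m)}=\mu_p^{(n)}+\ord_p(\rho_k)$ and that $|\delta_p|$ is unchanged. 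Since both tuples are indexed by $K$ and lie in $\prod_{p\in K}R_{p,|\delta_p|}$, they are equal; this proves the local statement and hence the conjecture. In fact it exhibits the type of $m$ as the tuple $\big(\varphi_{p,|e_p|}(f_p)\big)_p$ over the primes $p$ with $e_p:=\ord_p(m)-\ord_p(r(m))\ne0$, where $f_p:=\min(\ord_p(m),\ord_p(r(m)))$, which depends on $m$ alone.

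I do not anticipate a real obstacle: the mechanism is simply that $\ord_p(\rho_k)$ cancels out of $\delta_p$ and re-enters only as the shift in the argument of $\varphi_{p,|\delta_p|}$ that is already present when one uses the base $m$ with $k=1$. The points that need care are formal ones --- that $m$ is an admissible base for Theorem \ref{main1} (immediate from the definition of a $v$-palindrome), the digit-reversal identity $r(n(k))=r(n)(k)$ (a short string argument using $10\nmid n$), and the observation that the index set $K$ of a type is representation-independent, so that ``equality of types'' is a well-posed assertion. Alternatively one may bypass the trivial representation and argue directly: for $m=n_1(k_1)=n_2(k_2)$ and each $p\in K$, the quantity $|\delta_p|$ is the same computed via $n_1$ or via $n_2$, and the argument $\mu_p+\ord_p(\rho_k)$ fed into $\varphi_{p,|\delta_p|}$ equals $\min(\ord_p(m),\ord_p(r(m)))$ in both cases, so the two types agree entry by entry.
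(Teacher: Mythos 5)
Your argument is correct, and in fact it settles something the paper leaves open: Conjecture \ref{invariant} is stated as a conjecture and is not proved in the paper (the paper gives only the numerical evidence from $m=13(15)$ and then adopts the convention of dropping ``with respect to $n$'' on the strength of the conjecture). There is therefore no ``paper's proof'' to compare against; your proposal is a genuine resolution of the open question.

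The mechanism you use is exactly right and deserves to be stated cleanly. From $m=n\rho_k$ and $r(m)=r(n)\rho_k$ (the latter using $10\nmid n$ so that $r(n)$ also has $d$ digits, and hence $r(n)(k)=r(n)\rho_k$ with the same $\rho_k$), additivity of $\ord_p$ gives $\ord_p(m)-\ord_p(r(m))=\ord_p(n)-\ord_p(r(n))$, so the crucial set $K$, each $\delta_p$, each $|\delta_p|$ and $\sgn(\delta_p)$, the characteristic equation, and the sets $R_{p,|\delta_p|}$ are all representation-independent; while $\mu_p^{(m)}=\mu_p^{(n)}+\ord_p(\rho_k)$. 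Since Theorem \ref{Thm5} identifies the type of $n(k)$ as the unique $\mathbf{u}$ with $u_p=\varphi_{p,|\delta_p|}(\mu_p^{(n)}+\ord_p(\rho_k))$, and the type of $m$ with respect to $m$ (via $m(1)=m$, $\rho_1=1$, $\ord_p(\rho_1)=0$) has $p$-th entry $\varphi_{p,|\delta_p|}(\mu_p^{(m)})=\varphi_{p,|\delta_p|}(\mu_p^{(n)}+\ord_p(\rho_k))$, the two tuples coincide entry by entry. The reduction to the trivial representation then immediately gives the conjecture for any pair $(n_1,k_1),(n_2,k_2)$. The only formal points worth spelling out in a write-up are the ones you flag: that a $v$-palindrome $m$ is itself admissible as a base in the sense of Theorem \ref{main1}, and the string identity $r(n(k))=r(n)(k)$ under $10\nmid n$. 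Your closing remark is also the right way to phrase the payoff: the type of $m$ is intrinsic, equal to $\bigl(\varphi_{p,|e_p|}(f_p)\bigr)_{p\in K}$ with $e_p=\ord_p(m)-\ord_p(r(m))$, $f_p=\min(\ord_p(m),\ord_p(r(m)))$, and $K=\{p: e_p\ne0\}$.
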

    
We shall omit saying ``with respect to $n$" hereafter, it being understood implicitly, though if Conjecture \ref{invariant} were true, then omitting ``with respect to $n$" would be completely appropriate.
    
    Notice that if $\mathbf{u}$ is nondegenerate, then there exists a $v$-palindrome $n(k)$ of type $\mathbf{u}$, because $S_\mathbf{u}$ contains positive integers. We have thus categorized the $v$-palindromes $n(k)$ into a number of types which equals the number of nondegenerate characteristic solutions. For the characteristic equation \eqref{chareq} for $n$, it could happen that there are no characteristic solutions at all, or that there are characteristic solutions but unfortunately all are degenerate, or that there are nondegenerate solutions. In the former two cases $n(k)$ is not a $v$-palindrome, for any $k\ge1$, i.e.\ $c(n)=\infty$. In the third case only does there exist a $k\ge1$ for which $n(k)$ is a $v$-palindrome.

  Let us summarize this section. We started with a natural number $n$, not a multiple of $10$, and not a palindrome. We have the sequence of repeated concatenations of the decimal digits of $n$, namely $n(k)$ ($k\ge1$). We would like to know which of them are $v$-palindromes. In order to do this, We first solve for the characteristic solutions for $n$. Then, each nondegenerate characteristic solution $\mathbf{u}$ gives rise to a nonempty infinite subset $S_{\mathbf{u}}\cap\mathbb{N}$ of integers $k\ge1$ for which $n(k)$ is a $v$-palindrome. The sets $S_{\mathbf{u}}\cap\mathbb{N}$ are pairwise disjoint over the nondegenerate solutions $\mathbf{u}$ and their union gives the set of all $k\ge1$ for which $n(k)$ is a $v$-palindrome. This section is of a more theoretical and abstract nature, Section \ref{procedure} puts these ideas into a more algorithmic description.

  \section{The indicator function.}\label{indicators}
  
Again, throughout this section we fix a natural number $n$ as in Theorem \ref{main1}, i.e.\ $n$ is a natural number, not a multiple of $10$, and not a palindrome. In Section \ref{rules}, the set of all $k\ge1$ for which $n(k)$ is a $v$-palindrome is represented as a disjoint union of sets in Corollary \ref{represent}. In this section, we construct a function $I(k)$ which evaluates to $1$ if $n(k)$ is a $v$-palindrome and $0$ if not.

\subsection{Definition of the indicator funcion.}
  
  Recall that an indicator function is defined as follows.
  \begin{definition}
    If $A\subseteq\Omega$, then the {\em indicator function} of $A$ in $\Omega$ is the function $I_A\colon \Omega\to\{0,1\}$ defined by
    \begin{equation}
      I_A(x)=\begin{cases}
        1 & \text{if $x\in A$,} \\
        0 & \text{if $x\in \Omega\setminus A$.}
      \end{cases}
    \end{equation}
    In particular, for an integer $a\ge1$, we denote the indicator function of $a\mathbb{Z}\subseteq\mathbb{Z}$ by $I_a$. That is, for $x\in\mathbb{Z}$,
    \begin{equation}
      I_a(x)=
      \begin{cases}
        1 & \text{if $a\mid x$,} \\
        0 & \text{if $a\nmid x$.}
      \end{cases}
    \end{equation}
  \end{definition}

We have the following representation of the indicator function of the set $S(A,B)$ defined in Definition \ref{SAB}.
\begin{lemma}
  Let $A,B\subseteq\mathbb{N}$ be finite sets, then for all $x\in\mathbb{Z}$,
  \begin{equation}\label{decomp}
    I_{S(A,B)}(x)=I_{\lcm(A)}(x)\prod_{b\in B}(1-I_b(x)).
  \end{equation}
  Hence $I_{S(A,B)}$ is periodic modulo $\lcm(A\cup B)$.
\end{lemma}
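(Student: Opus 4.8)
The plan is to verify the pointwise identity \eqref{decomp} by a direct case analysis on whether $x$ lies in $S(A,B)$, exploiting the fact that every quantity appearing on the right-hand side takes values in $\{0,1\}$. First I would record the elementary reduction that, for any integer $x$, the condition ``$a\mid x$ for all $a\in A$'' is equivalent to $\lcm(A)\mid x$, i.e.\ to $I_{\lcm(A)}(x)=1$; this is just the defining property of the least common multiple, and it remains correct in the degenerate case $A=\varnothing$, where $\lcm(\varnothing)=1$ and the condition is vacuous. Likewise, ``$b\nmid x$'' is by definition equivalent to $I_b(x)=0$, i.e.\ to $1-I_b(x)=1$.

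Next I would observe that a finite product of numbers each of which is $0$ or $1$ equals $1$ precisely when every factor equals $1$, and equals $0$ otherwise. Applying this to $\prod_{b\in B}(1-I_b(x))$, this product equals $1$ iff $b\nmid x$ for every $b\in B$ (again vacuously true, hence the empty product $1$, when $B=\varnothing$), and equals $0$ iff some $b\in B$ divides $x$. Combining this with the previous paragraph, the right-hand side of \eqref{decomp}, being a product of the $\{0,1\}$-valued quantities $I_{\lcm(A)}(x)$ and $\prod_{b\in B}(1-I_b(x))$, equals $1$ iff $\lcm(A)\mid x$ and $b\nmid x$ for all $b\in B$, which is exactly the condition $x\in S(A,B)$ of Definition \ref{SAB}; in every other case it equals $0$. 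This says precisely that the right-hand side equals $I_{S(A,B)}(x)$, establishing \eqref{decomp} for all $x\in\mathbb{Z}$.

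For the periodicity claim I would argue that for each integer $a\ge1$ the function $I_a$ satisfies $I_a(x+m)=I_a(x)$ whenever $a\mid m$, since $a\mid x$ iff $a\mid x+m$; in particular $I_a$ is periodic modulo any multiple of $a$. As $\lcm(A)$ and every $b\in B$ divide $\lcm(A\cup B)$, each of $I_{\lcm(A)}$ and the functions $I_b$ ($b\in B$) is periodic modulo $\lcm(A\cup B)$, hence so is their product, i.e.\ the right-hand side of \eqref{decomp}. By the identity just proved, $I_{S(A,B)}$ is periodic modulo $\lcm(A\cup B)$.

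I do not anticipate a serious obstacle here; the proof is essentially a translation of logical conjunction and negation into arithmetic of $0$'s and $1$'s. The only points demanding a little care are the bookkeeping of the degenerate cases $A=\varnothing$ and $B=\varnothing$ (empty products equal to $1$, and $\lcm(\varnothing)=1$), and making explicit that every factor on the right-hand side is $\{0,1\}$-valued, so that the product faithfully encodes the conjunction of the two defining conditions of $S(A,B)$ rather than merely being proportional to it.
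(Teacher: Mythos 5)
Your proof is correct and follows essentially the same case analysis as the paper's: both verify \eqref{decomp} pointwise by checking that the $\{0,1\}$-valued right-hand side equals $1$ exactly when $x\in S(A,B)$, and both derive periodicity from the invariance of $I_{\lcm(A)}$ and each $I_b$ under shifts by $\lcm(A\cup B)$. The only difference is that you make the degenerate cases $A=\varnothing$, $B=\varnothing$ and the equivalence $\lcm(A)\mid x \Leftrightarrow (a\mid x\ \forall a\in A)$ more explicit, which is a harmless refinement.
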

\begin{proof}
  If $x\in S(A,B)$, then $a\mid x$ for all $a\in A$, and so $\lcm(A)\mid x$. Thus $I_{\lcm(A)}(x)=1$. Moreover, for every $b\in B$, $b\nmid x$, and so $I_b(x)=0$. Hence we see that the right-hand side of \eqref{decomp} is $1$.
  
  On the other hand, assume that $x$ is an integer with $x\notin S(A,B)$. Then either $x$ is not divisible by some particular $a\in A$, or is divisible by some particular $b\in B$. In the first case, $x$ cannot be a multiple of $\lcm(A)$, thus $I_{\lcm(A)}(x)=0$ and we see that the right-hand side of \eqref{decomp} is $0$. In the second case, $I_b(x)=1$ for some $b\in B$, hence one of the factors in the product on the right-hand side of \eqref{decomp} becomes $0$, and we see again that the right-hand side of \eqref{decomp} evaluates to $0$. This proves \eqref{decomp}.
  
  To prove the periodicity, we see that if we add to $x$ the quantity $\lcm({A\cup B})$, the values of $I_{\lcm(A)}$, and all the $I_b$ ($b\in B$) do not change, and hence $I_{S(A,B)}$ is periodic modulo $\lcm(A\cup B)$.
\end{proof}
Consequently, we directly have the following.

\begin{corollary}\label{indicatoru}
  Let $\mathbf{u}$ be a nondegenerate characteristic solution for $n$, then for all $x\in\mathbb{Z}$,
  \begin{equation}\label{Iu}
    I_{S_{\mathbf{u}}}(x)=I_{\operatorname{lcm}(A_{\mathbf{u}})}(x)\prod_{b\in B_{\mathbf{u}}}(1-I_b(x)).
  \end{equation}
  Hence $I_{S_\mathbf{u}}$ is periodic modulo $\lcm(A_\mathbf{u}\cup B_{\mathbf{u}})$. Moreover, for $k\ge1$, $n(k)$ is $v$-palindromic of type $\mathbf{u}$ if and only if $I_{S_{\mathbf{u}}}(k)=1$.
\end{corollary}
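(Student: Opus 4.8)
The plan is to derive all three assertions of Corollary~\ref{indicatoru} as immediate specializations of the preceding lemma together with the bookkeeping set up in Section~\ref{rules}; this is a genuine corollary and the argument should be short. First I would recall from Definition~\ref{AuBu} that $S_{\mathbf{u}}=S(A_{\mathbf{u}},B_{\mathbf{u}})$, where $A_{\mathbf{u}}=\bigcup_{p\in K}A_{p,\mathbf{u}}$ and $B_{\mathbf{u}}=\bigcup_{p\in K}B_{p,\mathbf{u}}$ are finite subsets of $\mathbb{N}$ (each $A_{p,\mathbf{u}}$, $B_{p,\mathbf{u}}$ has at most one element and $K$ is finite). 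Thus the hypotheses of the lemma hold with $A=A_{\mathbf{u}}$ and $B=B_{\mathbf{u}}$, and substituting into \eqref{decomp} gives \eqref{Iu} verbatim, while the periodicity clause of the lemma yields that $I_{S_{\mathbf{u}}}$ is periodic modulo $\lcm(A_{\mathbf{u}}\cup B_{\mathbf{u}})$.

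For the final sentence I would unwind the definition of type. By definition, for $k\ge1$ the number $n(k)$ is $v$-palindromic of type $\mathbf{u}$ precisely when $n(k)$ is a $v$-palindrome \emph{and} $k\in S_{\mathbf{u}}$. The point to note is that, for $k\ge1$, the second condition already subsumes the first: if $k\in S_{\mathbf{u}}$ then $k\in S_{\mathbf{u}}\cap\mathbb{N}\subseteq\bigsqcup_{\mathbf{v}\in U}(S_{\mathbf{v}}\cap\mathbb{N})$, which by Corollary~\ref{represent} is exactly the set of $k\ge1$ for which $n(k)$ is a $v$-palindrome (equivalently, one may invoke Theorem~\ref{Thm5} directly: $k\in S_{\mathbf{u}}$ is condition \eqref{sets}, hence \eqref{equalities} holds, hence $n(k)$ is a $v$-palindrome). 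Therefore, for $k\ge1$, the statement ``$n(k)$ is $v$-palindromic of type $\mathbf{u}$'' is equivalent to ``$k\in S_{\mathbf{u}}$'', which by the definition of the indicator function $I_{S_{\mathbf{u}}}$ is equivalent to $I_{S_{\mathbf{u}}}(k)=1$.

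I do not anticipate any real obstacle. The only step that is not a pure substitution is the observation just described — that ``of type $\mathbf{u}$'' is defined as a conjunction of two conditions, one of which is redundant given the other on $\mathbb{N}$ — and this is handled by a one-line appeal to Corollary~\ref{represent} (or Theorem~\ref{Thm5}). Note also that nondegeneracy of $\mathbf{u}$ is not needed for \eqref{Iu} or for the periodicity; it is only relevant to guarantee that $v$-palindromes of type $\mathbf{u}$ actually exist, and plays no role in the verification itself.
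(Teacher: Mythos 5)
Your proposal is correct and takes essentially the same route as the paper, which gives no separate proof and simply notes the corollary follows directly from the preceding lemma applied with $A=A_{\mathbf{u}}$, $B=B_{\mathbf{u}}$ (using $S_{\mathbf{u}}=S(A_{\mathbf{u}},B_{\mathbf{u}})$ from Definition~\ref{AuBu}). Your explicit unwinding of ``type $\mathbf{u}$'' via Theorem~\ref{Thm5} (or Corollary~\ref{represent}), including the observation that nondegeneracy is not needed for \eqref{Iu} or the periodicity, is exactly the routine verification the paper leaves implicit.
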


  Since we have the disjoint union \eqref{disjoint}, we have the following.

\begin{theorem}\label{indicator}
  We have that for all $x\in\mathbb{Z}$,
  \begin{equation}\label{decompS}
    I_S(x)=\sum_{\mathbf{u}\in U^\ast}I_{S_{\mathbf{u}}}(x).
  \end{equation}
  Hence $I_S$ is periodic modulo
  \begin{equation}\label{p}
    \lcm\left(\bigcup_{\mathbf{u}\in U^\ast}(A_{\mathbf{u}}\cup B_{\mathbf{u}})\right).  
  \end{equation}
  Moreover, for $k\ge1$, $n(k)$ is $v$-palindromic if and only if $I_S(k)=1$.
\end{theorem}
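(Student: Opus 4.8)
The plan is to obtain \eqref{decompS} as a direct consequence of the disjoint union \eqref{disjoint} and Corollary \ref{represent}, and then to read off periodicity from the already-established periodicity of each $I_{S_\mathbf{u}}$ in Corollary \ref{indicatoru}. First I would recall that by definition $S=\bigsqcup_{\mathbf{u}\in U^\ast}S_\mathbf{u}$, where the union is genuinely disjoint by Theorem \ref{forNZ} (the sets $S_\mathbf{u}$ are pairwise disjoint over $\mathbf{u}\in U$, hence a fortiori over $\mathbf{u}\in U^\ast$). For a disjoint union of sets, the indicator function of the union is the sum of the indicator functions of the pieces: for any $x\in\mathbb{Z}$, at most one term $I_{S_\mathbf{u}}(x)$ is nonzero, and it equals $1$ precisely when $x$ lies in that $S_\mathbf{u}$, i.e.\ when $x\in S$. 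This gives \eqref{decompS} immediately, with no computation.

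Next I would establish the periodicity claim. By Corollary \ref{indicatoru}, each $I_{S_\mathbf{u}}$ is periodic modulo $\lcm(A_\mathbf{u}\cup B_\mathbf{u})$. Set $P=\lcm\!\left(\bigcup_{\mathbf{u}\in U^\ast}(A_\mathbf{u}\cup B_\mathbf{u})\right)$, the quantity in \eqref{p}. For each $\mathbf{u}\in U^\ast$ we have $\lcm(A_\mathbf{u}\cup B_\mathbf{u})\mid P$, so any period of $I_{S_\mathbf{u}}$ divides $P$, and therefore $P$ is itself a period of each $I_{S_\mathbf{u}}$: adding $P$ to $x$ changes none of the values $I_{S_\mathbf{u}}(x)$. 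Summing over the finitely many $\mathbf{u}\in U^\ast$ via \eqref{decompS}, we get $I_S(x+P)=I_S(x)$ for all $x\in\mathbb{Z}$, so $I_S$ is periodic modulo $P$. (One should note $U^\ast$ is finite since $U$ is a subset of the finite product $\prod_{p\in K}R_{p,|\delta_p|}$, each $R_{p,\delta}$ being finite, so the sum in \eqref{decompS} is a finite sum and these manipulations are legitimate; also if $U^\ast=\varnothing$ then $S=\varnothing$, $I_S\equiv0$, and the empty lcm convention makes $P=1$, so the statement holds trivially.)

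Finally, the ``moreover" clause follows by unwinding definitions together with Theorem \ref{Thm5}. For $k\ge1$, by Theorem \ref{Thm5} and Corollary \ref{represent} the number $n(k)$ is a $v$-palindrome if and only if $k\in\bigsqcup_{\mathbf{u}\in U}(S_\mathbf{u}\cap\mathbb{N})$; since degenerate $\mathbf{u}$ contribute empty sets, this is the same as $k\in\bigsqcup_{\mathbf{u}\in U^\ast}(S_\mathbf{u}\cap\mathbb{N})=S\cap\mathbb{N}$, i.e.\ $k\in S$, i.e.\ $I_S(k)=1$. Alternatively, one can argue termwise: $n(k)$ is a $v$-palindrome iff it is $v$-palindromic of some type $\mathbf{u}\in U^\ast$, which by the last sentence of Corollary \ref{indicatoru} happens iff $I_{S_\mathbf{u}}(k)=1$ for that $\mathbf{u}$, and by disjointness this is equivalent to $\sum_{\mathbf{u}\in U^\ast}I_{S_\mathbf{u}}(k)=I_S(k)=1$.

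I do not anticipate a genuine obstacle here; this theorem is essentially bookkeeping that packages the earlier disjointness results (Theorem \ref{forNZ}, Corollaries \ref{forN} and \ref{represent}) and the per-piece periodicity (Corollary \ref{indicatoru}) into a single statement. The only point requiring a little care is making sure the union in \eqref{disjoint} is used in its disjoint form so that indicators add rather than merely satisfying an inequality, and confirming that $P$ as defined really is a common period (which is where the divisibility $\lcm(A_\mathbf{u}\cup B_\mathbf{u})\mid P$ is used). Neither is difficult; the work was already done in the preceding sections.
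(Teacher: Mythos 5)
Your proposal is correct and follows essentially the same route as the paper: disjointness of the $S_{\mathbf{u}}$ gives \eqref{decompS} directly, and the periodicity of each $I_{S_{\mathbf{u}}}$ modulo $\lcm(A_{\mathbf{u}}\cup B_{\mathbf{u}})$ (Corollary \ref{indicatoru}) plus the divisibility $\lcm(A_{\mathbf{u}}\cup B_{\mathbf{u}})\mid P$ yields periodicity modulo \eqref{p}, with the ``moreover'' clause coming from Theorem \ref{Thm5} and Corollary \ref{represent}. One slip in wording only: the clause ``any period of $I_{S_{\mathbf{u}}}$ divides $P$'' is not what you need (nor true in general); the relevant fact is simply that a positive multiple of a period is again a period, which is how you in fact conclude.
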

\begin{proof}
  If $x\in S$, then $x\in S_{\mathbf{u}}$ for exactly one $\mathbf{u}\in U^\ast$, so the right-hand side of \eqref{decompS} evaluates to $1$.  If $x$ is an integer with $x\notin S$, then $I_{S_{\mathbf{u}}}(x)=0$ for all $\mathbf{u}\in U^\ast$, so the right-hand side of \eqref{decompS} evaluates to $0$.   
  Let the quantity in \eqref{p} be denoted by $\omega$. For each $\mathbf{u}\in U^\ast$, $I_{S_\mathbf{u}}=I_{S(A_\mathbf{u},B_\mathbf{u})}$ is periodic modulo $\lcm(A_\mathbf{u}\cup B_\mathbf{u})$ by Corollary \ref{indicatoru}. Since $\omega$ is a multiple of $\lcm(A_\mathbf{u}\cup B_\mathbf{u})$, for every $\mathbf{u}\in U^\ast$, we see that $I_S$ is periodic modulo $\omega$.

\end{proof}

\begin{definition}
  The function $I_S$ in Theorem \ref{indicator} will be called the {\em indicator function} for $n$, and denoted simply as $I$, or if we want to indicate the $n$, denoted by $I^n$.
\end{definition}

\begin{theorem}\label{becomeI}
  Let $\omega>0$, then $\omega$ is a period of $I$ if and only if it is a period of $I|_\mathbb{N}$ if and only if it is a period of $n$ (in the sense of Definition \ref{defnp}). Hence $\omega_0(n)$ is the fundamental period of $I$.
\end{theorem}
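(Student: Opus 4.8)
The plan is to prove the three equivalences in sequence and then read off the statement about $\omega_0(n)$. First I would unwind the definitions: for an integer $\omega>0$, saying $\omega$ is a \emph{period of $n$} means (by Definition \ref{defnp}) that for all $k\ge1$, $n(k)$ is $v$-palindromic iff $n(k+\omega)$ is $v$-palindromic; saying $\omega$ is a \emph{period of $I|_\mathbb{N}$} means $I(k)=I(k+\omega)$ for all $k\ge1$; and saying $\omega$ is a \emph{period of $I$} means $I(x)=I(x+\omega)$ for all $x\in\mathbb{Z}$.

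The equivalence ``$\omega$ is a period of $I|_\mathbb{N}$ $\iff$ $\omega$ is a period of $n$'' is essentially immediate from the last sentence of Theorem \ref{indicator}: since for $k\ge1$ we have $n(k)$ is $v$-palindromic iff $I(k)=1$ (and $I$ takes only the values $0,1$), the biconditional ``$n(k)$ $v$-palindromic $\iff$ $n(k+\omega)$ $v$-palindromic'' is the same statement as ``$I(k)=I(k+\omega)$''. So I would just cite Theorem \ref{indicator} and note that two $\{0,1\}$-valued quantities with the same indicator relationship agree exactly when their indicators agree.

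The more substantive equivalence is ``$\omega$ is a period of $I$ $\iff$ $\omega$ is a period of $I|_\mathbb{N}$.'' One direction is trivial: a period of $I$ on all of $\mathbb{Z}$ restricts to a period of $I|_\mathbb{N}$. For the converse I would use the structure established in Theorem \ref{indicator}, namely that $I=I_S$ is \emph{genuinely periodic} on $\mathbb{Z}$ with period $P:=\lcm\bigl(\bigcup_{\mathbf{u}\in U^\ast}(A_\mathbf{u}\cup B_\mathbf{u})\bigr)$. Given any $x\in\mathbb{Z}$, choose a positive integer $m$ with $x+mP\ge1$ and $x+\omega+mP\ge1$; then $I(x)=I(x+mP)$ and $I(x+\omega)=I(x+\omega+mP)$ by $P$-periodicity, while $I(x+mP)=I(x+mP+\omega)$ because $\omega$ is a period of $I|_\mathbb{N}$ and $x+mP\ge1$. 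Chaining these equalities gives $I(x)=I(x+\omega)$, so $\omega$ is a period of $I$ on $\mathbb{Z}$.

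Finally, combining the two equivalences: the set of periods of $I$, the set of periods of $I|_\mathbb{N}$, and the set of periods of $n$ all coincide. In particular their minimal elements coincide; the minimal period of $n$ is $\omega_0(n)$ by Definition \ref{defnp}, so $\omega_0(n)$ is also the fundamental period of $I$. The main (and only real) obstacle is the converse direction of the middle equivalence — extending periodicity from $\mathbb{N}$ to $\mathbb{Z}$ — and it is handled precisely by exploiting that $I$ is already known to be honestly periodic on all of $\mathbb{Z}$ (Theorem \ref{indicator}), which lets us shift any integer argument into $\mathbb{N}$ without changing values; everything else is bookkeeping with the definitions.
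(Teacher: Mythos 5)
Your proof is correct and follows essentially the same route as the paper: the paper's one-line proof cites Theorem \ref{restriction} together with Theorem \ref{indicator} and Definition \ref{defnp}, and your inline shifting argument (moving an arbitrary integer into $\mathbb{N}$ by adding a multiple of the known period $P$ of $I$ on $\mathbb{Z}$) is exactly the content of Theorem \ref{restriction}(i) as proved in the appendix. So you have simply unpacked the cited results rather than taken a different approach.
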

\begin{proof}
  This follows from Theorems \ref{restriction} and \ref{indicator}, and the definition (Definition \ref{defnp}) of a period of a number $n$.
\end{proof}

Because of the above theorem, to find $\omega_0(n)$, we just have to find the fundamental period of $I$. If we try to do this by using Theorem \ref{funperiod}, then we will have to first express $I$ into the form \eqref{f}. This is doable but conceivably tedious. Instead, there is a much easier way, accomplished by writing $I$ into a linear combination of functions of the form $I_a$ with integer coefficients (Theorem \ref{incsum}), which we discuss in the next subsection.

\subsection{The indicator function as a linear combination.}

The following lemma is what is used to write the indicator function $I$ as a linear combination of functions of the form $I_a$ with integer coefficients.

\begin{lemma}\label{comb}
  For any integers $a,b\ge1$, $I_aI_b=I_{\lcm(a,b)}$.
\end{lemma}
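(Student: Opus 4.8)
The statement to prove is Lemma \ref{comb}: for any integers $a,b\ge1$, $I_aI_b=I_{\lcm(a,b)}$.

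The plan is to verify this pointwise on $\mathbb{Z}$, since both sides are functions $\mathbb{Z}\to\{0,1\}$. Fix an integer $x$. The key elementary fact is that $a\mid x$ and $b\mid x$ together hold if and only if $\lcm(a,b)\mid x$; this is the defining property of the least common multiple (every common multiple of $a$ and $b$ is a multiple of $\lcm(a,b)$, and conversely $\lcm(a,b)$ is itself a common multiple).

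So I would argue: if $\lcm(a,b)\mid x$, then $a\mid x$ and $b\mid x$, hence $I_a(x)=I_b(x)=1$, so $I_a(x)I_b(x)=1=I_{\lcm(a,b)}(x)$. Conversely, if $\lcm(a,b)\nmid x$, then $x$ fails to be divisible by at least one of $a,b$ (for if both divided $x$, then $\lcm(a,b)$ would divide $x$), so at least one of $I_a(x), I_b(x)$ is $0$, whence the product is $0=I_{\lcm(a,b)}(x)$. Since $x$ was arbitrary, the two functions agree everywhere.

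There is essentially no obstacle here — the lemma is a direct unwinding of the definition of the indicator functions $I_a$ (given just above in the excerpt) combined with the universal property of $\lcm$. The only thing to be careful about is citing the correct characterization of $\lcm(a,b)$ as the common multiples being exactly its multiples, rather than invoking anything about prime factorizations, which keeps the proof as short as possible.

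\begin{proof}
  Let $x\in\mathbb{Z}$. Recall that $I_a(x)=1$ if $a\mid x$ and $I_a(x)=0$ otherwise, and similarly for $I_b$ and $I_{\lcm(a,b)}$. By the defining property of the least common multiple, $a\mid x$ and $b\mid x$ hold simultaneously if and only if $\lcm(a,b)\mid x$. Hence, if $\lcm(a,b)\mid x$, then $I_a(x)=I_b(x)=1$, so $I_a(x)I_b(x)=1=I_{\lcm(a,b)}(x)$. If $\lcm(a,b)\nmid x$, then at least one of $a\mid x$, $b\mid x$ fails, so at least one of $I_a(x), I_b(x)$ equals $0$, whence $I_a(x)I_b(x)=0=I_{\lcm(a,b)}(x)$. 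Since $x$ was arbitrary, $I_aI_b=I_{\lcm(a,b)}$.
\end{proof}
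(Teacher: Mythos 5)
Your proof is correct and follows essentially the same route as the paper: a pointwise check splitting on whether $\lcm(a,b)\mid x$, using the fact that $a\mid x$ and $b\mid x$ hold together exactly when $\lcm(a,b)\mid x$. No gaps; the argument matches the paper's own proof.
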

\begin{proof}
  We need to prove that for all $x\in \mathbb{Z}$,
  \begin{equation}
    I_a(x)I_b(x)=I_{\lcm(a,b)}(x).
  \end{equation}
  If $\lcm(a,b)\mid x$, then both $a\mid x$ and $b\mid x$, thus both sides of the above equation evaluates to $1$. If $\lcm(a,b)\nmid x$, then either $a\nmid x$ or $b\nmid x$, thus in the above equation, one of the factors on the left-hand side is $0$, and the right-hand side is also $0$. This completes the proof.
\end{proof}

\begin{theorem}\label{Thm13}
  Let $\mathbf{u}$ be a nondegenerate characteristic solution for $n$, then
  \begin{equation}
    I_{S_{\mathbf{u}}}=\sum_{B\subseteq B_{\mathbf{u}}}(-1)^{|B|}I_{\lcm(A_{\mathbf{u}}\cup B)}.
  \end{equation}
\end{theorem}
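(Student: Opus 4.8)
The plan is to combine the product formula for $I_{S_\mathbf{u}}$ from Corollary \ref{indicatoru} with the multiplicativity of the $I_a$ established in Lemma \ref{comb}, and then expand the product over $B_\mathbf{u}$.

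First I would start from \eqref{Iu}, namely
\begin{equation*}
  I_{S_{\mathbf{u}}}=I_{\lcm(A_{\mathbf{u}})}\prod_{b\in B_{\mathbf{u}}}(1-I_b).
\end{equation*}
Next I would expand the finite product over $b\in B_\mathbf{u}$ in the standard way: a product of terms of the form $(1-I_b)$ expands to a sum over all subsets $B\subseteq B_\mathbf{u}$ of the term $(-1)^{|B|}\prod_{b\in B}I_b$. (Here one should note that $B_\mathbf{u}$ is a set, so distinct subsets pick out genuinely distinct subcollections; repetitions are not an issue.) This gives
\begin{equation*}
  I_{S_{\mathbf{u}}}=\sum_{B\subseteq B_{\mathbf{u}}}(-1)^{|B|}\,I_{\lcm(A_{\mathbf{u}})}\prod_{b\in B}I_b.
\end{equation*}

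Then I would apply Lemma \ref{comb} repeatedly (formally, by induction on $|B|$) to collapse the product $I_{\lcm(A_{\mathbf{u}})}\prod_{b\in B}I_b$ into a single indicator function. Since $I_aI_b=I_{\lcm(a,b)}$, an easy induction shows $I_{\lcm(A_{\mathbf{u}})}\prod_{b\in B}I_b=I_{\lcm(\{\lcm(A_\mathbf{u})\}\cup B)}=I_{\lcm(A_{\mathbf{u}}\cup B)}$, where in the last equality I use the associativity of $\lcm$ (that $\lcm$ of $\lcm(A_\mathbf{u})$ together with the elements of $B$ equals $\lcm$ of all of $A_\mathbf{u}\cup B$). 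One subtlety to handle cleanly is the empty subset $B=\varnothing$: there the product is empty, equal to the constant function $1$, and $\lcm(A_\mathbf{u}\cup\varnothing)=\lcm(A_\mathbf{u})$, so the corresponding term is $I_{\lcm(A_\mathbf{u})}$, consistent with the formula; one should also note $A_\mathbf{u}$ is nonempty or adopt the convention $\lcm(\varnothing)=1$ so that $I_{\lcm(A_\mathbf{u})}$ is well defined in all cases (when $A_\mathbf{u}=\varnothing$ this is the constant function $1$). Substituting back yields exactly
\begin{equation*}
  I_{S_{\mathbf{u}}}=\sum_{B\subseteq B_{\mathbf{u}}}(-1)^{|B|}I_{\lcm(A_{\mathbf{u}}\cup B)},
\end{equation*}
as claimed.

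I do not anticipate a serious obstacle here; this is essentially a formal inclusion–exclusion expansion. The only points requiring a little care are the bookkeeping of the empty-subset term and the conventions for $\lcm$ of an empty or singleton collection, together with a clean statement of the induction that turns a product of the $I_a$'s into a single $I_{\lcm}$. The main ingredients — the product formula \eqref{Iu} and Lemma \ref{comb} — are already available, so the proof is short.
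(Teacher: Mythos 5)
Your proposal is correct and follows exactly the paper's route: expand the product formula \eqref{Iu} from Corollary \ref{indicatoru} over subsets of $B_{\mathbf{u}}$ and collapse the resulting products with Lemma \ref{comb}. The paper states this in one line, so your write-up is simply a more detailed version of the same argument, with sensible attention to the empty-subset and $\lcm$ conventions.
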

\begin{proof}
  This follows by expanding the equation \eqref{Iu} in Corollary \ref{indicatoru} and then simplifying using Lemma \ref{comb}.
\end{proof}

Similarly, we have the following.

\begin{theorem}\label{expthm}
  We have the expression
  \begin{equation}\label{exp}
    I=\sum_{\mathbf{u}\in U^\ast}\sum_{B\subseteq B_{\mathbf{u}}}(-1)^{|B|}I_{\lcm(A_{\mathbf{u}}\cup B)}
  \end{equation}
  for the indicator function for $n$.
\end{theorem}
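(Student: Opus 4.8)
\textbf{Proof proposal for Theorem \ref{expthm}.}

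The plan is to combine the disjoint-union decomposition of the indicator function with the set-level identity for each type, both of which are already available. First I would invoke Theorem \ref{indicator}, which gives
\begin{equation*}
  I=I_S=\sum_{\mathbf{u}\in U^\ast}I_{S_{\mathbf{u}}},
\end{equation*}
valid as an identity of functions $\mathbb{Z}\to\{0,1\}$ because of the disjoint union \eqref{disjoint}. This reduces the claim to substituting, for each nondegenerate characteristic solution $\mathbf{u}$, the expression for $I_{S_{\mathbf{u}}}$ supplied by Theorem \ref{Thm13}, namely
\begin{equation*}
  I_{S_{\mathbf{u}}}=\sum_{B\subseteq B_{\mathbf{u}}}(-1)^{|B|}I_{\lcm(A_{\mathbf{u}}\cup B)}.
\end{equation*}
Carrying out this substitution term by term in the sum over $\mathbf{u}\in U^\ast$ yields exactly the double sum \eqref{exp}, so the proof is a one-line concatenation of the two cited results.

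For completeness I might also indicate why Theorem \ref{Thm13} itself holds, since Theorem \ref{expthm} is essentially its ``global'' version: starting from \eqref{Iu} in Corollary \ref{indicatoru}, one expands the product $\prod_{b\in B_{\mathbf{u}}}(1-I_b)$ over all subsets $B\subseteq B_{\mathbf{u}}$, obtaining $\sum_{B\subseteq B_{\mathbf{u}}}(-1)^{|B|}\prod_{b\in B}I_b$, and then multiplies through by $I_{\lcm(A_{\mathbf{u}})}$. Each resulting monomial $I_{\lcm(A_{\mathbf{u}})}\prod_{b\in B}I_b$ collapses to a single function of the form $I_a$ by repeated application of Lemma \ref{comb}, with $a=\lcm(A_{\mathbf{u}}\cup B)$. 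This is purely a finite-product manipulation and uses nothing beyond the fact that $I_aI_b=I_{\lcm(a,b)}$.

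There is essentially no genuine obstacle here: the theorem is a bookkeeping statement, and the only thing to be careful about is that the substitution is legitimate as an equality of functions on all of $\mathbb{Z}$ (not just on $\mathbb{N}$), which is precisely why Theorems \ref{forNZ} and \ref{indicator} were proved at the level of $\mathbb{Z}$ rather than $\mathbb{N}$. I would therefore keep the proof to two sentences — cite Theorem \ref{indicator} for the outer sum and Theorem \ref{Thm13} for the inner sum — and perhaps remark that the right-hand side of \eqref{exp} is manifestly an integer linear combination of functions $I_a$, which is the form needed for the later application to computing $\omega_0(n)$.
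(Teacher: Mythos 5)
Your proof is correct and matches the paper's own argument exactly: the paper likewise obtains \eqref{exp} by combining Theorem \ref{indicator} (the outer sum over $\mathbf{u}\in U^\ast$) with Theorem \ref{Thm13} (the inclusion–exclusion expansion of each $I_{S_{\mathbf{u}}}$). The extra remarks you add about why Theorem \ref{Thm13} holds and why the identity is at the level of $\mathbb{Z}$ are accurate but not needed for this particular step.
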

\begin{proof}
  This follows from Theorem \ref{indicator} and Theorem \ref{Thm13}.
\end{proof}

We consequently have the following.

\begin{theorem}\label{incsum}
  There exist integers $0<c_1<c_2<\cdots<c_q$ and integers $\lambda_1,\lambda_2,\ldots,\lambda_q\ne0$ such that
  \begin{equation}\label{incsume}
    I=\sum^q_{j=1}\lambda_j I_{c_j},
  \end{equation}
  possibly with $q=0$, i.e.\ we have an empty sum.
\end{theorem}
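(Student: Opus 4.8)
The plan is to start from the explicit expression for $I$ given in Theorem~\ref{expthm}, namely
\begin{equation*}
  I=\sum_{\mathbf{u}\in U^\ast}\sum_{B\subseteq B_{\mathbf{u}}}(-1)^{|B|}I_{\lcm(A_{\mathbf{u}}\cup B)},
\end{equation*}
and simply collect terms. First I would observe that this is a finite sum (the index set $U^\ast$ is finite since $U$ is finite, and each power set $\mathcal{P}(B_{\mathbf{u}})$ is finite) of functions each of the form $\pm I_a$ for some positive integer $a$; here I use that $\lcm(A_{\mathbf{u}}\cup B)$ is a well-defined positive integer, which needs the convention $\lcm(\varnothing)=1$ so that the term with $A_{\mathbf{u}}=B=\varnothing$ contributes $I_1$ (the constant function $1$). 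Then I would group the summands according to the value of the modulus: for each positive integer $c$ that arises as $\lcm(A_{\mathbf{u}}\cup B)$ for at least one pair $(\mathbf{u},B)$, let
\begin{equation*}
  \lambda_c=\sum_{\substack{\mathbf{u}\in U^\ast,\ B\subseteq B_{\mathbf{u}}\\ \lcm(A_{\mathbf{u}}\cup B)=c}}(-1)^{|B|},
\end{equation*}
so that $I=\sum_c \lambda_c I_c$ where $c$ ranges over this finite set of moduli.

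Next I would discard the indices with $\lambda_c=0$: since $I_c$ for distinct $c$ are genuinely different functions but an identically zero coefficient contributes nothing, removing those terms does not change $I$. What remains is a sum $\sum \lambda_c I_c$ over finitely many $c$ with all $\lambda_c\ne0$. Finally I would relabel the surviving moduli in increasing order as $0<c_1<c_2<\cdots<c_q$ and set $\lambda_j=\lambda_{c_j}$, which gives exactly \eqref{incsume}. The case $q=0$ (empty sum) occurs precisely when every $\lambda_c$ vanishes, or when $U^\ast=\varnothing$ to begin with; in either situation $I$ is identically zero and the empty sum is the correct representation.

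There is essentially no hard step here — the statement is a bookkeeping consequence of Theorem~\ref{expthm} — but the one point that deserves care is the claim that after collecting terms and deleting zero coefficients, the resulting representation genuinely equals $I$ as a function $\mathbb{Z}\to\mathbb{Z}$ (not merely as a formal combination). This is immediate once one notes that equality of the two sides is being asserted pointwise and that rearranging and cancelling a finite sum of real-valued functions is valid pointwise. I would also remark, as a sanity check rather than a necessary part of the proof, that the values of $I$ are in $\{0,1\}$ by Theorem~\ref{indicator}, whereas the individual $\lambda_j$ need not be $\pm1$; there is no contradiction, since the $I_{c_j}$ overlap and the coefficients conspire to keep the total in $\{0,1\}$. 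No further input is needed beyond Theorem~\ref{expthm} and the convention $\lcm(\varnothing)=1$.
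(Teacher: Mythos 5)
Your proposal is correct and follows essentially the same route as the paper, whose proof is precisely "collect like terms in the expression of Theorem \ref{expthm}"; you have merely spelled out the bookkeeping (grouping by the modulus $\lcm(A_{\mathbf{u}}\cup B)$, discarding zero coefficients, and relabeling in increasing order) together with the harmless convention $\lcm(\varnothing)=1$. Nothing further is needed.
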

\begin{proof}
  We simply collect like terms in equation \eqref{exp} in Theorem \ref{expthm}.
\end{proof}

We state without proof the following theorem, which can be proved by induction.

\begin{theorem}
If a function $f\colon\mathbb{Z}\to\mathbb{C}$ is represented as
\begin{equation}\label{form}
  f=\sum^q_{j=1}\lambda_j I_{c_j},
\end{equation}
where $c_1<\cdots<c_q$ are positive integers (possibly $q=0$) and $\lambda_1,\ldots,\lambda_q\ne 0$ are any integers, then this representation is unique, in the sense that if $c'_1<\cdots<c'_{q'}$ are positive integers and $\lambda'_1,\ldots,\lambda'_{q'}\ne0$ integers which satisfy
\begin{equation}
  f=\sum^{q'}_{j=1}\lambda'_j I_{c'_j},
\end{equation}
then $q=q'$ and for all $1\le j\le q$, $c_j=c'_j$ and $\lambda_j=\lambda'_j$.
\end{theorem}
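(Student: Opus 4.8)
The plan is to prove both the existence of the representation \eqref{form} for a general periodic $f\colon\mathbb{Z}\to\mathbb{C}$ is \emph{not} what is asked — rather, \emph{uniqueness} of the representation \eqref{incsume}, assuming it exists. So I would argue by induction on $q'$ (or equivalently by taking the difference of the two representations and showing it must be trivial). Subtracting, it suffices to prove the following: if $\sum_{j=1}^{m}\mu_j I_{e_j}=0$ identically on $\mathbb{Z}$, where $e_1<\cdots<e_m$ are positive integers and the $\mu_j$ are arbitrary (nonzero) integers, then $m=0$. This is the clean statement to induct on.

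The key step is to look at the \emph{largest} modulus present, say $e_m$. I would evaluate the hypothetical identity $\sum_j \mu_j I_{e_j}=0$ at a cleverly chosen argument. Specifically, pick an integer $x$ that is divisible by $e_m$ but \emph{not} divisible by any $e_j$ with $j<m$ — concretely, one can try $x = e_m \cdot \prod_{j<m}(\text{something})$, but the existence of such an $x$ needs a short argument because $e_j$ need not be coprime to $e_m$. The correct device is to choose $x$ to be a large multiple of $e_m$ of the form $x = N e_m$ where $N$ is chosen so that $e_j \nmid N e_m$ for each $j<m$: since $e_j < e_m$, we have $e_j \nmid e_m$ is \emph{not} guaranteed (e.g. $e_j$ could divide $e_m$), so instead I would induct differently — consider the \emph{smallest} modulus $e_1$ and evaluate at $x=0$, which gives $\sum_j \mu_j = 0$; that alone is not enough.

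A cleaner route: reduce everything modulo $E=\lcm(e_1,\dots,e_m)$, so $f$ becomes a function on $\mathbb{Z}/E\mathbb{Z}$, and $I_{e_j}$ becomes the indicator of the subgroup $(E/e_j)\mathbb{Z}/E\mathbb{Z}$ of $\mathbb{Z}/E\mathbb{Z}$ of order $e_j$. Now apply Möbius-type / inclusion-exclusion reasoning on the divisor lattice: expand each $I_{e_j}$ in terms of the additive characters of $\mathbb{Z}/E\mathbb{Z}$, or more simply, sum the identity against suitable test functions. The cleanest is to use that the functions $x\mapsto I_{e}(x)$ for $e\mid E$ are linearly independent over $\mathbb{C}$ on $\mathbb{Z}/E\mathbb{Z}$: this follows because $I_e = \sum_{d\mid e}\frac{1}{d}\sum_{\substack{\chi \text{ order exactly } d}}\chi$ after normalization, i.e. $I_e(x)=\frac{1}{e}\sum_{t:\,(E/e)\mid t,\ 0\le t<E}\zeta_E^{tx}$ where $\zeta_E=e^{2\pi i/E}$; so $I_e$ is supported (in the character/Fourier basis) exactly on the characters $\chi_t$ with $t$ a multiple of $E/e$, with all coefficients equal to $1/e\ne 0$. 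Since distinct $e\mid E$ give distinct sets $\{t: (E/e)\mid t\}$ and in particular the character $\chi_{E/e}$ of exact order $e$ appears in $I_{e'}$ only when $e\mid e'$, a standard "look at the maximal $e_j$" argument on the Fourier side forces all $\mu_j=0$: the coefficient of $\chi_{E/e_m}$ in $\sum_j\mu_j I_{e_j}$ is $\sum_{j:\,e_m\mid e_j}\mu_j/e_j=\mu_m/e_m$ (as $e_m$ is maximal), which must vanish, so $\mu_m=0$, a contradiction, and then induct downward.

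The main obstacle is precisely the failure of naive divisibility-based separation: since the $c_j$ need not be pairwise coprime (indeed $\lcm$'s of subsets of the $B_{\mathbf u}$'s are highly non-coprime), one cannot just "plug in a number divisible by $c_q$ but nothing else." Passing to $\mathbb{Z}/E\mathbb{Z}$ with $E=\lcm(c_1,\dots,c_q)$ and using the discrete Fourier transform (characters of the cyclic group) cleanly resolves this, because in the character basis $I_c$ has transparent support, and the "maximal element" argument then goes through. Everything after that reduction — the formula $I_c(x)=\frac1c\sum_{(E/c)\mid t}\zeta_E^{tx}$, distinctness of supports, descending induction — is routine and I would state it without grinding through the character sums in detail, citing the orthogonality relations for characters of $\mathbb{Z}/E\mathbb{Z}$.
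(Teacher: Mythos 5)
The paper itself gives no argument for this statement (it is stated ``without proof,'' with the remark that it ``can be proved by induction''), so there is no official proof to match; your argument is a valid, self-contained substitute, though heavier than necessary. The reduction to showing that a vanishing combination $\sum_{j=1}^{m}\mu_j I_{e_j}=0$ with $e_1<\cdots<e_m$ and $\mu_j\ne0$ forces $m=0$ is exactly right, and your Fourier computation on $\mathbb{Z}/E\mathbb{Z}$, $E=\lcm(e_1,\dots,e_m)$, is correct: $I_e$ is supported on the characters $\chi_t$ with $(E/e)\mid t$, the character $\chi_{E/e_m}$ occurs in $I_{e_j}$ only when $e_m\mid e_j$, hence only for $j=m$ by maximality, and linear independence of characters gives $\mu_m/e_m=0$, a contradiction. (One slip in the setup: $I_{e_j}$ reduces to the indicator of $e_j\mathbb{Z}/E\mathbb{Z}$, of order $E/e_j$, not of $(E/e_j)\mathbb{Z}/E\mathbb{Z}$; your Fourier formula, which is what the argument actually uses, is nevertheless the right one.) Do note, however, that the elementary route you abandoned works if you evaluate at the right point: instead of $x=0$, evaluate the identity at $x=e_1$, the \emph{smallest} modulus. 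Since $e_j>e_1>0$ implies $e_j\nmid e_1$ for $j>1$, this immediately gives $\mu_1=0$, a contradiction, with no need for coprimality or for the lcm reduction; iterating this (or phrasing it as induction on $q$, comparing $c_1$ with $c_1'$ and then subtracting) is presumably the induction the paper has in mind. Your character-basis argument is not wasted effort, though: it is in the same spirit as the paper's own proof of Theorem \ref{Thm12}, which likewise passes to the root-of-unity expansion and isolates a maximal term, so your method buys a uniform viewpoint at the cost of machinery the uniqueness statement does not actually require.
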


According to the above theorem, we have in particular that the indicator function $I$ for $n$ can be expressed in the form \eqref{incsume} uniquely. Examples of some indicator functions are given in Table \ref{table:indicatorfunA}.

\section{Finding the fundamental period.}

Again we fix a natural number $n$ as in Theorem \ref{main1}. Our goal is to find the fundamental period $\omega_0(n)$. If we try to do this by using Theorem \ref{funperiod}, then we will have to first express the indicator function $I$ for $n$ into the form \eqref{f}. We explicitly express $I$ into this form in Section \ref{express}. However, this is not a smart way to find $\omega_0(n)$. Instead, in Section \ref{smart}, we show that $\omega_0(n)$ is simply the least common multiple of the $c_j$'s in Theorem \ref{incsum}, using a theorem in \cite{V1}.

\subsection{The indicator function in the form of \eqref{f}.}\label{express}

According to Theorem 8.1 on p.\ 158 in \cite{A}, we have the following representation of $I_a$ in terms of the $a$-th roots of unity in $\mathbb{C}$. Let the set of all $a$-th roots of unity in $\mathbb{C}$ be denoted by $R(a)$.
\begin{lemma}\label{indicatora}
  For $a\ge1$, we have that for all $x\in\mathbb{Z}$,
  \begin{equation}
    I_a(x)=\frac{1}{a}\sum_{\zeta\in R(a)}\zeta^x.
  \end{equation}
\end{lemma}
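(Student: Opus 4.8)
The final statement to prove is Lemma~\ref{indicatora}: for $a\ge 1$ and all $x\in\mathbb{Z}$,
\begin{equation*}
  I_a(x)=\frac{1}{a}\sum_{\zeta\in R(a)}\zeta^x.
\end{equation*}

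The plan is to evaluate the sum $\sum_{\zeta\in R(a)}\zeta^x$ directly by the standard finite geometric series argument, splitting into two cases according to whether $a\mid x$. First I would fix a primitive $a$-th root of unity, say $\zeta_a=e^{2\pi i/a}$, so that $R(a)=\{\zeta_a^0,\zeta_a^1,\ldots,\zeta_a^{a-1}\}$ and the sum becomes $\sum_{j=0}^{a-1}(\zeta_a^x)^j$, a geometric series with ratio $\zeta_a^x$. In the case $a\mid x$, every term $\zeta^x$ equals $1$ (since $\zeta^a=1$), so the sum is exactly $a$, and dividing by $a$ gives $1=I_a(x)$. In the case $a\nmid x$, the ratio $\zeta_a^x\ne 1$, so the geometric series formula gives $\sum_{j=0}^{a-1}(\zeta_a^x)^j=\frac{(\zeta_a^x)^a-1}{\zeta_a^x-1}=\frac{(\zeta_a^a)^x-1}{\zeta_a^x-1}=\frac{1-1}{\zeta_a^x-1}=0$, and dividing by $a$ gives $0=I_a(x)$. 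In both cases the right-hand side equals $I_a(x)$, which completes the argument.

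Alternatively, since the excerpt explicitly cites Theorem~8.1 on p.~158 of \cite{A}, one could simply invoke that result: that theorem is the classical orthogonality statement that $\frac1a\sum_{\zeta\in R(a)}\zeta^x$ equals $1$ when $a\mid x$ and $0$ otherwise, which is precisely the defining property of $I_a$. In a self-contained write-up I would still include the two-line geometric-series proof above rather than only citing, matching the paper's stated aim of being self-contained.

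There is essentially no obstacle here: the only point requiring the mildest care is noting that $\zeta_a^x\ne 1$ exactly when $a\nmid x$ (because $\zeta_a$ has order exactly $a$), which justifies using the geometric series formula in the second case; and observing that $(\zeta_a^x)^a=(\zeta_a^a)^x=1^x=1$, so the numerator vanishes. Everything else is routine.
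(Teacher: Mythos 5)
Your proof is correct. The paper itself gives no argument for this lemma: it simply states it as a consequence of Theorem 8.1 on p.~158 of \cite{A}, which is exactly the orthogonality relation for the $a$-th roots of unity that you reprove. Your geometric-series computation (sum equals $a$ when $a\mid x$ since every $\zeta^x=1$, and equals $0$ when $a\nmid x$ since the ratio $\zeta_a^x\ne 1$ and $(\zeta_a^x)^a=1$) is precisely the standard proof of that cited theorem, so mathematically you are doing the same thing, just making it self-contained rather than deferring to Apostol; the one point needing care, that $\zeta_a^x=1$ iff $a\mid x$ because $\zeta_a$ has exact order $a$, is handled correctly in your write-up.
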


If we use the above lemma into the equation \eqref{incsume} in Theorem \ref{incsum}, we can express the indicator function into the form \eqref{f} (see also theorem below). In principle, in view of Theorem \ref{becomeI}, we can use Theorem \ref{funperiod} to calculate the fundamental period of $I$, which will then be $\omega_0(n)$. However, as aforementioned, this is not a smart way.

\begin{theorem}
  The indicator function is
  \begin{gather}
    I(x)=\sum_{\mathbf{u}\in U^\ast}\sum_{B\subseteq B_{\mathbf{u}}}\frac{(-1)^{|B|}}{\lcm(A_{\mathbf{u}}\cup B)}\sum_{\zeta\in R(\lcm(A_{\mathbf{u}}\cup B))}\zeta^x \label{one}\\
    =\sum_{\zeta\in R(\omega)} \left(\sum_{\mathbf{u}\in U^\ast, B\subset B_\mathbf{u},\zeta\in R(\lcm(A_{\mathbf{u}}\cup B))}\frac{(-1)^{|B|}}{\lcm(A_{\mathbf{u}}\cup B)}\right)\zeta^x,\label{two}
  \end{gather}
  where $\omega$ is the quantity given by \eqref{p} in Theorem \ref{indicator}.
\end{theorem}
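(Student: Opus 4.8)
The plan is to derive both displayed equalities directly, with essentially no new idea beyond bookkeeping: \eqref{one} is obtained by substituting the expansion of each $I_{\lcm(A_{\mathbf{u}}\cup B)}$ from Lemma \ref{indicatora} into the expression \eqref{exp} for $I$ given in Theorem \ref{expthm}, and \eqref{two} is obtained from \eqref{one} by interchanging the order of summation so that the outermost sum runs over the $\omega$-th roots of unity.

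First I would start from Theorem \ref{expthm}, which gives
\begin{equation*}
  I=\sum_{\mathbf{u}\in U^\ast}\sum_{B\subseteq B_{\mathbf{u}}}(-1)^{|B|}I_{\lcm(A_{\mathbf{u}}\cup B)}.
\end{equation*}
Applying Lemma \ref{indicatora} with $a=\lcm(A_{\mathbf{u}}\cup B)$ to each term and evaluating at $x\in\mathbb{Z}$ yields \eqref{one} immediately. The only thing to check here is that $\lcm(A_{\mathbf{u}}\cup B)\ge1$ for every term, so that the lemma applies; this holds because every element appearing in $A_{\mathbf{u}}$ or $B_{\mathbf{u}}$ is a positive integer (recall $T_{p,\mathbf u}$ is a pair of sets of at most one positive integer), and an empty $\lcm$ is $1$.

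Next, to pass from \eqref{one} to \eqref{two}, I would observe that for each $\mathbf{u}\in U^\ast$ and each $B\subseteq B_{\mathbf{u}}$ we have $\lcm(A_{\mathbf{u}}\cup B)\mid\omega$, where $\omega$ is the quantity in \eqref{p}; indeed $\omega=\lcm\bigl(\bigcup_{\mathbf v\in U^\ast}(A_{\mathbf v}\cup B_{\mathbf v})\bigr)$ is a common multiple of all the integers occurring in any $A_{\mathbf{u}}$ or $B_{\mathbf{u}}$. Consequently $R(\lcm(A_{\mathbf{u}}\cup B))\subseteq R(\omega)$ for every term, so in \eqref{one} every $\zeta$ that occurs is an $\omega$-th root of unity. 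Thus I may rewrite the triple sum in \eqref{one} as a sum over $\zeta\in R(\omega)$ of the sum of those coefficients $(-1)^{|B|}/\lcm(A_{\mathbf{u}}\cup B)$ for which $\zeta\in R(\lcm(A_{\mathbf{u}}\cup B))$, which is exactly \eqref{two}. (Since $R(\omega)$ is finite and for each fixed $\zeta$ only finitely many pairs $(\mathbf{u},B)$ contribute, all rearrangements are of finite sums and are trivially valid.)

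There is no real obstacle; the statement is a routine reorganization. The only point demanding a word of care is the inclusion $R(\lcm(A_{\mathbf{u}}\cup B))\subseteq R(\omega)$, i.e.\ the divisibility $\lcm(A_{\mathbf{u}}\cup B)\mid\omega$, which justifies collapsing the inner set of roots of unity into $R(\omega)$ and pulling $\sum_{\zeta\in R(\omega)}$ to the outside; I would state this divisibility explicitly and note it follows at once from the definition \eqref{p} of $\omega$. Everything else is substitution (Lemma \ref{indicatora} into \eqref{exp}) and interchanging two finite sums.
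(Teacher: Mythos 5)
Your proposal is correct and matches the paper's own argument: substitute Lemma \ref{indicatora} into the expansion \eqref{exp} of Theorem \ref{expthm} to get \eqref{one}, then interchange the finite sums (using $\lcm(A_{\mathbf{u}}\cup B)\mid\omega$, so every root of unity appearing lies in $R(\omega)$) to get \eqref{two}. The paper leaves the divisibility remark implicit, but otherwise your route is the same.
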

\begin{proof}
  The first equality follows by using Lemma \ref{indicatora} into the equation \eqref{exp} in Theorem \ref{expthm}. The second equality is simply an iterated version of the first, summing over $\zeta$ first.
\end{proof}

\subsection{Finding the fundamental period from \eqref{incsume}}\label{smart}

We first give some definitions, more or less equivalent to some definitions given in \cite{V1}. The set of all functions $f\colon \mathbb{Z}\to\mathbb{C}$, which can be denoted by $\mathbb{C}^\mathbb{Z}$, is obviously a vector space over $\mathbb{C}$ with both vector addition and scalar multiplication defined pointwise. The set, denoted by $\mathcal{F}$ in Section \ref{5.1}, of all periodic arithmetical functions, is a subspace of $\mathbb{C}^\mathbb{Z}$. The so-called Ramanujan spaces can be defined as follows.

\begin{definition}
  Let $\omega\ge1$ be an integer. The set of all functions
  \begin{equation}
    f(x)=\sum_{\zeta\in R^\ast(\omega)}g(\zeta)\zeta^x,\quad\text{for }x\in\mathbb{Z},
  \end{equation}
  where the $g(\zeta)$'s are complex numbers and $R^\ast(\omega)$ denotes the set of primitive $\omega$-th roots of unity in $\mathbb{C}$, is a subspace of $\mathcal{F}$ called a {\it Ramanujan space} and denoted by $S_\omega$.
\end{definition}

Then, Theorem 12 in \cite{V1} can be stated as follows.

\begin{theorem}\label{Theorem12}
  Let $\omega_1,\ldots,\omega_m\ge1$ be distinct integers, and let $0\ne f_j\in S_{\omega_j}$ for each $1\le j\le m$ (the $0$ here denoting the zero function). Then the fundamental period of $f=f_1+\cdots+f_m$ is $\lcm(\omega_1,\ldots,\omega_m)$.
\end{theorem}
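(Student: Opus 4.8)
The plan is to reduce the statement to a linear-independence fact about the exponential functions $x \mapsto \zeta^x$ and then exploit the structure of the Ramanujan spaces. First I would set $\omega = \lcm(\omega_1,\ldots,\omega_m)$ and observe that each $f_j \in S_{\omega_j}$ is periodic modulo $\omega_j$, hence modulo $\omega$; therefore $f = f_1 + \cdots + f_m$ is periodic modulo $\omega$, so the fundamental period of $f$ divides $\omega$. The content is the reverse inequality: that no proper divisor of $\omega$ is a period of $f$.

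For the reverse direction, I would suppose $N$ is a period of $f$ and aim to show $\omega_j \mid N$ for every $j$, which forces $\omega \mid N$. The key tool is that the functions $\{x \mapsto \zeta^x : \zeta \in R(\omega)\}$ (as $\zeta$ ranges over \emph{all} $\omega$-th roots of unity) are linearly independent over $\mathbb{C}$ when viewed as functions on $\mathbb{Z}$ (equivalently on $\mathbb{Z}/\omega\mathbb{Z}$) — this is essentially the invertibility of the Vandermonde/DFT matrix. Writing each $f_j(x) = \sum_{\zeta \in R^\ast(\omega_j)} g_j(\zeta)\zeta^x$ and noting that $R^\ast(\omega_1),\ldots,R^\ast(\omega_m)$ are pairwise disjoint subsets of $R(\omega)$ (a root of unity has a unique order), the representation $f(x) = \sum_j \sum_{\zeta \in R^\ast(\omega_j)} g_j(\zeta)\zeta^x$ is the unique expansion of $f$ in the basis of exponentials, and since $f_j \neq 0$ each $g_j$ is not identically zero — so for each $j$ some primitive $\omega_j$-th root of unity $\zeta_j$ appears with nonzero coefficient. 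Now if $N$ is a period of $f$, then the function $x \mapsto f(x+N) - f(x) = \sum_j \sum_{\zeta \in R^\ast(\omega_j)} g_j(\zeta)(\zeta^N - 1)\zeta^x$ is identically zero; by linear independence every coefficient $g_j(\zeta)(\zeta^N-1)$ vanishes, so in particular $\zeta_j^N = 1$, whence $\omega_j = \ord(\zeta_j) \mid N$. This holds for all $j$, so $\omega \mid N$, completing the argument.

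The remaining point is to justify the linear independence of the exponentials $x \mapsto \zeta^x$ over distinct roots of unity $\zeta$; I would either cite it as standard (the character table of $\mathbb{Z}/\omega\mathbb{Z}$, or the nonvanishing Vandermonde determinant $\prod_{i<j}(\zeta_i - \zeta_j)$), or give the one-line inductive argument: if $\sum_i a_i \zeta_i^x = 0$ for all $x \in \mathbb{Z}$ with the $\zeta_i$ distinct, apply the shift $x \mapsto x+1$ and subtract $\zeta_1$ times the original relation to eliminate the $\zeta_1$ term and induct on the number of terms. Since the paper's appendix develops periodic arithmetical functions and Theorem~\ref{funperiod} in detail, an alternative is to deduce Theorem~\ref{Theorem12} directly from Theorem~\ref{funperiod} by computing the DFT support of $f$: the spectrum of $f$ is the union of the spectra of the $f_j$, which are supported on the primitive $\omega_j$-th roots of unity, and the fundamental period formula of Theorem~\ref{funperiod} then reads off $\lcm$ of the orders appearing in the support, which is exactly $\lcm(\omega_1,\ldots,\omega_m)$.

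The main obstacle I anticipate is not conceptual but bookkeeping: one must be careful that the $g_j$ are genuinely supported on \emph{primitive} $\omega_j$-th roots (so the supports are disjoint and no cancellation across different $j$ can occur), and that ``$f_j \neq 0$'' really does give a surviving nonzero coefficient — both are immediate from the uniqueness of the exponential expansion, but the proof should state this cleanly rather than leave it implicit. If the paper prefers to stay self-contained without invoking Theorem~\ref{funperiod}, then spelling out the linear-independence lemma (with its short inductive proof) is the one piece of real work.
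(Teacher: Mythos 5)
Your proposal is correct. Your \emph{primary} argument --- show $\omega=\lcm(\omega_1,\ldots,\omega_m)$ is a period, then show any period $N$ forces each $\omega_j\mid N$ via linear independence of the exponentials $x\mapsto\zeta^x$ over distinct roots of unity --- is a genuine variant of the paper's proof, while the ``alternative'' route you sketch at the end (compute the spectral support of $f$ as the disjoint union of the $R^\ast(\omega_j)$, then apply Theorem~\ref{funperiod}) is in fact exactly the proof the paper gives. The difference is essentially one of packaging: the paper first establishes the fundamental-period formula $\lcm\{\delta(\zeta)\mid g(\zeta)\neq 0\}$ (Theorem~\ref{funperiod}) and then reads the answer off from it, whereas you bypass the formula and re-run the underlying linear-independence / Vandermonde argument directly in the divisibility framework. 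What your direct route buys is a proof that does not logically depend on Theorem~\ref{funperiod}; what you give up is that you re-prove (implicitly) the injectivity of $\Phi$, which the paper has already done once and for all. The bookkeeping point you flag --- that the supports $R^\ast(\omega_j)$ are pairwise disjoint so there is no cancellation across different $j$, and that $f_j\neq 0$ therefore yields a surviving primitive $\omega_j$-th root with nonzero coefficient --- is exactly the observation the paper makes explicit when it writes $g(\zeta)=g_j(\zeta)$ piecewise on the disjoint sets $R^\ast(\omega_j)$, so you have correctly identified the only place where care is needed.
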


We use the above theorem to show that the fundamental period of a function in the form \eqref{form} is the least common multiple of the $c_j$'s.

\begin{theorem}\label{Thm12}
For a function of the form
  \begin{equation}
  f=\sum^q_{j=1}\lambda_j I_{c_j},
\end{equation}
where the $c_1<\cdots<c_q$ are positive integers (possibly $q=0$) and $\lambda_1,\ldots,\lambda_q\ne 0$ are any integers, its fundamental period is $\lcm(c_1,\ldots,c_q)$.
\end{theorem}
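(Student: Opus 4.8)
The plan is to reduce Theorem \ref{Thm12} to Theorem \ref{Theorem12} by decomposing each $I_{c_j}$ into its Ramanujan components and then regrouping. By Lemma \ref{indicatora}, $I_a(x) = \frac{1}{a}\sum_{\zeta\in R(a)}\zeta^x$, and since every $a$-th root of unity is a primitive $e$-th root of unity for exactly one divisor $e\mid a$, we have $R(a) = \bigsqcup_{e\mid a} R^\ast(e)$, so that $I_a = \sum_{e\mid a} \frac{1}{a}\sum_{\zeta\in R^\ast(e)}\zeta^x$, i.e.\ $I_a$ is a sum of members of the Ramanujan spaces $S_e$ over divisors $e$ of $a$, with the component in $S_e$ being $\frac{1}{a}\sum_{\zeta\in R^\ast(e)}\zeta^x$, which is nonzero precisely because $|R^\ast(e)|\ge 1$ and the primitive roots are linearly independent as functions.

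First I would set $L = \lcm(c_1,\ldots,c_q)$ and collect the given expression $f = \sum_{j=1}^q \lambda_j I_{c_j}$ by Ramanujan space: for each divisor $e\mid L$, the $S_e$-component of $f$ is $f_e(x) = \Big(\sum_{j\,:\,e\mid c_j} \frac{\lambda_j}{c_j}\Big)\sum_{\zeta\in R^\ast(e)}\zeta^x$. Thus $f = \sum_{e\mid L} f_e$ with each $f_e\in S_e$, and the $S_e$ for distinct $e$ are among distinct Ramanujan spaces, so Theorem \ref{Theorem12} applies once I know which $f_e$ are nonzero: the fundamental period of $f$ is then $\lcm$ of those $e$ for which $f_e\ne 0$. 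Since each $f_e$ is a scalar multiple of the fixed nonzero function $\sum_{\zeta\in R^\ast(e)}\zeta^x$, we have $f_e\ne 0$ iff the coefficient $S(e) := \sum_{j\,:\,e\mid c_j}\lambda_j/c_j$ is nonzero.

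The main obstacle is therefore to show that $\lcm\{\,e\mid L : S(e)\ne 0\,\} = L$, equivalently that the divisors $e$ of $L$ with $S(e)\ne 0$ are not all confined to proper divisors of $L$. The cleanest route is to argue at the ``top'' for each $c_j$: consider $e = c_{j_0}$ where $c_{j_0}$ is maximal among the $c_j$ (more carefully, one should treat each $c_j$ that is maximal with respect to divisibility among $\{c_1,\dots,c_q\}$). For such a maximal $c_{j_0}$, the only indices $j$ with $c_{j_0}\mid c_j$ are those with $c_j = c_{j_0}$, and since the $c_j$ are distinct this is just $j=j_0$; hence $S(c_{j_0}) = \lambda_{j_0}/c_{j_0}\ne 0$, so $f_{c_{j_0}}\ne 0$. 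Running this over all divisibility-maximal elements of $\{c_1,\ldots,c_q\}$ shows every such $c_j$ contributes, and since $\lcm$ of the divisibility-maximal elements already equals $\lcm(c_1,\ldots,c_q) = L$, we get that the fundamental period is at least $L$; combined with the periodicity modulo $L$ (each $I_{c_j}$ is $c_j$-periodic, hence $L$-periodic) we conclude it equals $L$. The $q=0$ case is the zero function, whose fundamental period is $1 = \lcm(\varnothing)$, consistent with the statement.
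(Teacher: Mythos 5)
Your proposal is correct and follows the same overall strategy as the paper's proof: expand each $I_{c_j}$ via Lemma \ref{indicatora}, regroup into Ramanujan components $f_e\in S_e$ with coefficient $\sum_{j:\,e\mid c_j}\lambda_j/c_j$, and apply Theorem \ref{Theorem12}, reducing everything to the identity $\lcm\{e : f_e\ne 0\}=\lcm(c_1,\ldots,c_q)$. The only place you diverge is in proving that identity. The paper argues prime power by prime power: for each $p^\alpha$ exactly dividing $\lcm(c_1,\ldots,c_q)$ it takes the largest index $j_0$ with $p^\alpha\mid c_{j_0}$ and checks that the coefficient at $e=c_{j_0}$ collapses to $\lambda_{j_0}/c_{j_0}\ne 0$. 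You instead evaluate the coefficient at each $c_{j_0}$ that is maximal with respect to divisibility among $\{c_1,\ldots,c_q\}$, where it is trivially the single term $\lambda_{j_0}/c_{j_0}$, and then invoke the elementary (but worth stating explicitly) fact that every $c_j$ divides some divisibility-maximal $c_{j'}$, so the maximal elements already have least common multiple equal to $\lcm(c_1,\ldots,c_q)$. Both arguments are sound; yours avoids prime factorizations entirely, while the paper's avoids the poset-maximality fact. Two small points to make explicit in a final write-up: when applying Theorem \ref{Theorem12} you must discard the zero components $f_e$ (the theorem requires nonzero summands), and your maximal-element computation guarantees at least one nonzero component when $q\ge 1$, so the theorem is indeed applicable; the nonvanishing of $\sum_{\zeta\in R^\ast(e)}\zeta^x$ is seen most quickly by evaluating at $x=0$, where it equals $|R^\ast(e)|\ge 1$. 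Your separate treatment of $q=0$ as the zero function with fundamental period $1$ is consistent with the statement.
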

\begin{proof}
  Define the set
  \begin{equation}
    D=\{d\in\mathbb{N}\mid d\mid c_j\text{ for some }1\le j\le q\}.
  \end{equation}
  That is, $D$ is the union of the divisors of $c_1,\ldots,c_q$. In view of Lemma \ref{indicatora}, the function $f$ can be written as
  \begin{equation}
    f(x)=\sum_{d\in D}\sum_{\zeta\in R^\ast(d)}\left(\sum_{1\le j\le q, d\mid c_j}\frac{\lambda_j}{c_j}\right)\zeta^x
  \end{equation}
  Let us denote, for $d\in D$,
  \begin{equation}
  f_d(x)=\sum_{\zeta\in R^\ast(d)}\left(\sum_{1\le j\le q, d\mid c_j}\frac{\lambda_j}{c_j}\right)\zeta^x,
  \end{equation}
  so that $f_d\in S_{d}$. Then $f=\sum_{d\in D}f_d$. In view of Theorem \ref{Theorem12}, the fundamental period of $f$ is
  \begin{equation}
    \lcm\{d\in D\mid f_d\ne 0\}=\lcm\left\{d\in D\mid \sum_{1\le j\le q, d\mid c_j}\frac{\lambda_j}{c_j}\ne0\right\}.
  \end{equation}
  We have to show that
  \begin{equation}\label{show}
    \lcm\left\{d\in D\mid \sum_{1\le j\le q, d\mid c_j}\frac{\lambda_j}{c_j}\ne0\right\}=\lcm(c_1,\ldots,c_q).
  \end{equation}
  That the right-hand side above, denote it by $R$ (not the set of roots of unity defined in Section \ref{5.1}) is a multiple of the left-hand side, denote it by $L$, is plain. For each $d\in D$, we can write
  \begin{equation}
    \sum_{1\le j\le q, d\mid c_j}\frac{\lambda_j}{c_j}=\frac{\lambda_1}{c_1}[d\mid c_1]+\cdots+\frac{\lambda_q}{c_q}[d\mid c_q],
  \end{equation}
  where $[\cdot]$ is the Iverson bracket with $[P]=1$ if $P$ is true and $[P]=0$ if $P$ is false. Now suppose that $p^\alpha$ is any prime power with $p^\alpha\mid R$ but $p^{\alpha+1}\nmid R$. Let $j_0$ be the largest integer with $1\le j_0\le q$ and $p^\alpha\mid c_{j_0}$. Then
  \begin{equation}
  \sum_{1\le j\le q, c_{j_0}\mid c_{j}}\frac{\lambda_j}{c_j}=\frac{\lambda_1}{c_1}[c_{j_0}\mid c_1]+\cdots+\frac{\lambda_{j_0}}{c_{j_0}}[c_{j_0}\mid c_{j_0}]+\cdots+\frac{\lambda_q}{c_q}[c_{j_0}\mid c_q]=\frac{\lambda_{j_0}}{c_{j_0}}\ne0.
  \end{equation}
  This holds because, for $1\le j<j_0$, as $c_j<c_{j_0}$, plainly $[c_{j_0}\mid c_j]=0$; and for $j_0<j\le q$, if $c_{j_0}\mid c_j$, then $p^\alpha\mid c_j$, which contradicts our choice of $j_0$, thus $[c_{j_0}\mid c_j]=0$. Therefore as $L$ is a multiple of $c_{j_0}$, it is also a multiple of $p^\alpha$. Consequently, as $L$ is a multiple of every prime power divisor of $R$, $R\mid L$. Since both $L\mid R$ and $R\mid L$, \eqref{show} holds.
\end{proof}

As a consequence of the above theorem, we have the following corollary.

\begin{corollary}\label{findw0}
 Suppose that the indicator function for $n$ is expressed as
 \begin{equation}
   I=\sum^q_{j=1}\lambda_j I_{c_j},
 \end{equation}
 where $q\ge0$, $0<c_1<\ldots<c_q$, and $\lambda_1,\ldots,\lambda_q\ne0$ are integers. Then the fundamental period of $n$ is $\omega_0(n)=\lcm(c_1,\ldots,c_q)$.
\end{corollary}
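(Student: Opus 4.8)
The plan is to combine Theorem~\ref{becomeI} with Theorem~\ref{Thm12}. By Theorem~\ref{becomeI}, the fundamental period $\omega_0(n)$ of the number $n$ equals the fundamental period of its indicator function $I$. By Theorem~\ref{incsum}, the indicator function $I$ can be written in the form
\begin{equation}
  I=\sum^q_{j=1}\lambda_j I_{c_j},
\end{equation}
with $0<c_1<\cdots<c_q$ and all $\lambda_j\ne0$ (possibly $q=0$), which is precisely the shape of function to which Theorem~\ref{Thm12} applies. Hence the fundamental period of $I$ is $\lcm(c_1,\ldots,c_q)$, and chaining the two equalities gives $\omega_0(n)=\lcm(c_1,\ldots,c_q)$.

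More concretely, I would first invoke Theorem~\ref{incsum} to guarantee that such an expression for $I$ exists, and note that by the uniqueness theorem (the unnamed theorem following Theorem~\ref{incsum}) the data $q$, the $c_j$, and the $\lambda_j$ are determined by $I$; this makes the statement of the corollary well-posed, since it refers to ``the'' expression of $I$ in this form. Then I would apply Theorem~\ref{Thm12} with $f=I$ to conclude that the fundamental period of $I$ is $\lcm(c_1,\ldots,c_q)$. Finally I would cite Theorem~\ref{becomeI}, which asserts that $\omega_0(n)$ is exactly the fundamental period of $I$, to deduce $\omega_0(n)=\lcm(c_1,\ldots,c_q)$. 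One should also handle the degenerate case $q=0$ gracefully: there $I$ is the zero function, $\lcm$ of the empty set is $1$, and the fundamental period of the zero function is $1$, consistent with the convention that $\omega_0(n)\ge1$; this is already covered by the scope of Theorem~\ref{Thm12} as stated.

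There is essentially no obstacle here — the corollary is a direct concatenation of two results already proved, and the only mild subtlety is the bookkeeping in the $q=0$ case and making sure the phrase ``the fundamental period of $n$'' is read through Definition~\ref{defnp} and Theorem~\ref{becomeI} rather than reproved from scratch. So the proof is short:
\begin{proof}
  By Theorem~\ref{incsum} the indicator function $I$ for $n$ can be written in the stated form, and by Theorem~\ref{Thm12} its fundamental period is $\lcm(c_1,\ldots,c_q)$. By Theorem~\ref{becomeI}, $\omega_0(n)$ is the fundamental period of $I$, hence $\omega_0(n)=\lcm(c_1,\ldots,c_q)$.
\end{proof}
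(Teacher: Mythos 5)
Your proposal is correct and follows exactly the paper's intended route: the corollary is stated there as an immediate consequence of Theorem~\ref{Thm12} applied to $I$, combined with Theorem~\ref{becomeI} identifying $\omega_0(n)$ with the fundamental period of $I$ (with Theorem~\ref{incsum} and the uniqueness statement making the expression well-posed). Your remark on the $q=0$ case is a harmless extra check consistent with the paper's convention $\omega_0(n)=1$ in that situation.
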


\section{Finding the order.}

  We have defined the order $c(n)$ of a number $n$ in Definition \ref{defnorder}. It is the smallest integer $k\ge1$ such that $n(k)$ is a $v$-palindrome, if such a $k$ exists, and is $\infty$ otherwise. After expressing the indicator function for $n$ in the form \eqref{incsume}, it is easy to find $c(n)$.
  
  The following is plain.
  
  \begin{theorem}
    Let
  \begin{equation}
  f=\sum^q_{j=1}\lambda_j I_{c_j}
\end{equation}
be a function, where the $c_1<\cdots<c_q$ are positive integers (possibly $q=0$) and $\lambda_1,\ldots,\lambda_q\ne 0$ are any integers. If $q>0$, then the smallest positive integer $k$ such that $f(k)\ne0$ is $c_1$. If $q=0$, then for all integers $k\ge1$, $f(k)=0$.
  \end{theorem}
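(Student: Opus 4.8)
The plan is to prove the two assertions separately, both by direct inspection of the representation $f=\sum_{j=1}^q \lambda_j I_{c_j}$ together with elementary facts about the functions $I_a$.

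First I would handle the case $q=0$: then $f$ is the empty sum, i.e.\ the zero function, so $f(k)=0$ for every integer $k$, in particular for all $k\ge1$. This is immediate and needs no further argument.

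Next, assume $q>0$. I want to show that $c_1$ is the smallest positive integer $k$ with $f(k)\ne0$. For the lower bound, suppose $1\le k<c_1$. Since $0<c_1<c_2<\cdots<c_q$, every $c_j$ satisfies $c_j\ge c_1>k$, hence $c_j\nmid k$ for all $j$, so $I_{c_j}(k)=0$ for all $j$ and therefore $f(k)=\sum_{j=1}^q\lambda_j I_{c_j}(k)=0$. This shows no positive integer below $c_1$ works. For the value at $k=c_1$, note that $c_1\mid c_1$ gives $I_{c_1}(c_1)=1$, while for $2\le j\le q$ we have $c_j>c_1$, so $c_j\nmid c_1$ and hence $I_{c_j}(c_1)=0$. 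Therefore $f(c_1)=\lambda_1\cdot 1+\sum_{j=2}^q\lambda_j\cdot 0=\lambda_1\ne0$ by hypothesis. Combining the two observations, $c_1$ is the least positive integer at which $f$ is nonzero.

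There is essentially no obstacle here: the only mild subtlety is making sure one uses both that the $c_j$ are strictly increasing (so that $c_j>c_1$ for $j\ge2$, killing all higher terms at $k=c_1$) and that $c_1$ is the \emph{minimum} of the $c_j$ (so that no $c_j$ divides any $k<c_1$, giving the lower bound), together with the fact that $\lambda_1\ne0$ is what makes $f(c_1)$ genuinely nonzero rather than accidentally cancelling. No result beyond the definition of $I_a$ is needed, which is why the theorem is correctly labelled as plain.
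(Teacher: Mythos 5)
Your proof is correct, and it is the natural direct argument; the paper itself states this result with only the remark that it ``is plain'' and gives no proof, so your write-up simply supplies the obvious verification that was left to the reader.
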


As a consequence of the above theorem, we have the following corollary.

\begin{corollary}\label{findc}
  Suppose that the indicator function for $n$ is expressed as
 \begin{equation}
   I=\sum^q_{j=1}\lambda_j I_{c_j},
 \end{equation}
 where $q\ge0$, $0<c_1<\ldots<c_q$, and $\lambda_1,\ldots,\lambda_q\ne0$ are integers. Then $c(n)=c_1$ when $q>0$, and $c(n)=\infty$ when $q=0$.
\end{corollary}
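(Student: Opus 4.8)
The plan is to combine the preceding theorem (applied to $f=I$, the indicator function for $n$) with the characterization of $v$-palindromicity in terms of $I$ and with the definition of the order $c(n)$. First I would recall from Theorem \ref{indicator} that for every $k\ge1$, the number $n(k)$ is $v$-palindromic if and only if $I(k)=1$, and from Definition \ref{defnorder} that $c(n)$ is by definition the smallest integer $k\ge1$ with $n(k)$ $v$-palindromic (and $c(n)=\infty$ if no such $k$ exists). Hence $c(n)$ is precisely the smallest $k\ge1$ with $I(k)=1$, or $\infty$ if $I(k)=0$ for all $k\ge1$.

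Next I would observe that since $I$ is an indicator function it takes only the values $0$ and $1$, so for each $k$ the conditions $I(k)=1$ and $I(k)\ne0$ are equivalent. Therefore $c(n)$ equals the smallest $k\ge1$ with $I(k)\ne0$, or $\infty$ if $I(k)=0$ for all $k\ge1$. Now I would apply the preceding theorem to the function $f=I=\sum_{j=1}^q\lambda_jI_{c_j}$, which is exactly of the required form by Theorem \ref{incsum} and the uniqueness statement following it. The theorem tells us: if $q>0$, the smallest positive integer $k$ with $I(k)\ne0$ is $c_1$; and if $q=0$, then $I(k)=0$ for all integers $k\ge1$. Matching these two cases with the description of $c(n)$ in the previous paragraph gives $c(n)=c_1$ when $q>0$ and $c(n)=\infty$ when $q=0$, as claimed.

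There is essentially no substantive obstacle here; the corollary is a direct transcription of the preceding theorem through the dictionary "$n(k)$ is $v$-palindromic $\iff I(k)=1$." The only point that needs a (one-line) remark is the passage from "$I(k)\ne0$" in the theorem to "$I(k)=1$" in the definition of $c(n)$, which is immediate from $I$ being $\{0,1\}$-valued. I would keep the write-up to a couple of sentences, citing Theorem \ref{indicator}, Definition \ref{defnorder}, and the immediately preceding theorem.
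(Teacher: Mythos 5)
Your proposal is correct and matches the paper's (implicit) argument: the corollary is stated there as an immediate consequence of the preceding theorem, via exactly the dictionary you describe ($n(k)$ $v$-palindromic $\iff I(k)=1$, together with $I$ being $\{0,1\}$-valued so that $I(k)\ne0$ is the same as $I(k)=1$). Nothing further is needed.
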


In this way, once we have expressed the indicator function of a number $n$ into the form \eqref{incsume}, it will be straightforward to determine both $\omega_0(n)$ and $c(n)$, using Corollaries \ref{findw0} and \ref{findc}, respectively. In the next section, we describe the general procedure, starting from a given $n$ as in Theorem \ref{main1}, to eventually express its indicator function into the form \eqref{incsume}.
  
\section{General procedure}\label{procedure} Throughout this section, we fix a natural number $n$ as in Theorem \ref{main1}, i.e.\ $n$ is not a multiple of $10$, and not a palindrome. The following describes a general procedure, consisting of a few steps, to express the indicator function $I$ for $n$ into the form \eqref{incsume}, which can be used to determine both $\omega_0(n)$ and $c(n)$. This procedure works due to the previous discussions.

\subsection{Step 1.} Factorize both $n$ and $r(n)$,
\begin{align}
  n & =p^{a_1}_1\cdots p^{a_m}_m, \\
  r(n) & = p^{b_1}_1\cdots p^{b_m}_m,
\end{align}
where $p_1<\cdots<p_m$ are primes, and $a_i,b_i\ge0$ are integers, not both $0$.

\subsection{Step 2.} Look for those primes $p_i$ for which $a_i\ne b_i$, i.e.\ the crucial primes. Since we are only going to focus on these primes, we denote them again by $p_1<\cdots <p_m$, and the exponents are $a_i,b_i$. Define the numbers $\delta_i=a_i-b_i$, $\mu_i=\min(a_i,b_i)$, for $1\le i\le m$.

\subsection{Step 3.} The characteristic equation for $n$ is
\begin{equation}
  \sgn(\delta_1)u_1+\sgn(\delta_2)u_2+\cdots+\sgn(\delta_m)u_m=0.
\end{equation}
We want to solve it for $u_i\in R_{p_i,|\delta_i|}$, i.e.\ to find the characteristic solutions. If there are no solutions, then conclude that $c(n)=\infty$ and $\omega_0(n)=1$. Otherwise, let the solutions be $\mathbf{u}_1,\ldots,\mathbf{u}_t$, in any order.

\subsection{Step 4.} For each characteristic solution $\mathbf{u}$, we have the sets $A_{\mathbf{u}}$ and $B_{\mathbf{u}}$ of Definition \ref{AuBu}. The solution $\mathbf{u}$ is nondegenerate if and only if $S(A_\mathbf{u},B_\mathbf{u})\ne\varnothing$. Now $S(A_\mathbf{u},B_\mathbf{u})\ne\varnothing$ if and only if $b\nmid \lcm(A_\mathbf{u})$ for all $b\in B_\mathbf{u}$. Use this to rule out those characteristic solutions $\mathbf{u}$ which are degenerate. If no characteristic solutions remain, conclude that $c(n)=\infty$ and $\omega_0(n)=1$. Otherwise, let the nondegenerate characteristic solutions be $\mathbf{u}^\ast_1,\ldots,\mathbf{u}^\ast_s$, in any order.

\subsection{Step 5.} The indicator function $I^n$ for $n$ is then given by Theorem \ref{indicator} as
\begin{equation}
  I^n=\sum^s_{i=1}I_{S_{\mathbf{u}^\ast_i}}.
\end{equation}
By Corollary \ref{indicatoru} this can be written as
\begin{equation}
  I^n=\sum^s_{i=1}I_{\operatorname{lcm}(A_{\mathbf{u}^\ast_i})}\prod_{b\in B_{\mathbf{u}^\ast_i}}(1-I_b).
\end{equation}
 Multiplying everything out on the right-hand side above with the help of Lemma \ref{comb} and collecting like terms, $I^n$ can be expressed into the form \eqref{incsume}, i.e.\ 
 \begin{equation}
   I^n=\sum^q_{j=1}\lambda_j I_{c_j},
 \end{equation}
 where $q\ge1$, $0<c_1<\cdots<c_q$, and $\lambda_1,\ldots,\lambda_q\ne0$ are integers (how this is actually done is illustrated in the example of $n=126$ in Section \ref{counterexample}). Finally, conclude that
 \begin{align}
   c(n)=c_1,\quad \omega_0(n)=\lcm(c_1,\ldots,c_q).
 \end{align}

\subsection{Some remarks about the procedure.}
We have described the general procedure in five steps as above. Whether $c(n)=\infty$ can be ascertained at certain points during the procedure. Namely, in Step 3, if there are no characteristic solutions at all, we immediately conclude that $c(n)=\infty$ and the procedure ends; and in Step 5, if all the characteristic solutions are degenerate, then we immediately conclude that $c(n)=\infty$ and the procedure ends. Otherwise, $c(n)<\infty$, $\omega_0(n)$, and the indicator function $I^n$ are found in Step 5.

\section{Counterexample to Conjecture \ref{conj1}.}\label{counterexample}

The smallest counterexample to Conjecture \ref{conj1} is found by PARI/GP \cite{T} to be $n=126$. We perform the general procedure of Section \ref{procedure} to $n=126$ as follows.

\subsection{Step 1.} We factorize
\begin{align}
  126 &=  2\cdot 3^2\cdot 7,\\
  621 & = 3^3\cdot 23.
\end{align}

\subsection{Step 2.} The crucial primes are $2,3,7,23$. We arrange the numbers $p_i$, $a_i$, $b_i$, $\delta_i$ and $\mu_i$ into a table.

\begin{table}[H]
 \caption{$p_i$, $a_i$, $b_i$,$\delta_i$, and $\mu_i$ for $n=126$.}
 \label{table:deltamu}
 \centering
  \begin{tabular}{llllll}
   \hline
   $i$ & $p_i$ & $a_i$ & $b_i$ & $\delta_i$  & $\mu_i$ \\
   \hline \hline
   $1$ & $2$ & $1$ &$0$ &$1$  &$0$\\
   $2$ & $3$ & $2$ &$3$ &$-1$ &$2$\\
   $3$ & $7$ & $1$& $0$ &$1$ &$0$\\
   $4$ & $23$ & $0$ &$1$& $-1$ &$0$\\
   
   \hline
  \end{tabular}
\end{table}

\subsection{Step 3.} The characteristic equation is
\begin{equation}
  u_1-u_2+u_3-u_4=0,
\end{equation}
where we want to solve for $u_1\in\{1,2\}$, $u_2\in\{1,2,3\}$, $u_3\in\{1,2,7\}$, and $u_4\in \{1,2,23\}$. The characteristic solutions are
\begin{gather}
  \mathbf{u}_1 =(1,1,1,1),\quad \mathbf{u}_2  = (1,1,2,2), \quad \mathbf{u}_3  = (1,2,2,1),\quad \mathbf{u}_4  = (2,1,1,2)  \\
  \mathbf{u}_5  = (2,2,1,1),\quad 
  \mathbf{u}_6  = (2,2,2,2),\quad 
  \mathbf{u}_7  = (2,3,2,1).
\end{gather}
For each characteristic solution $\mathbf{u}_l$ ($1\le l\le 7$), also write $\mathbf{u}_l=(u_{l1},u_{l2},u_{l3},u_{l4})$.

\subsection{Step 4.} We make two tables of the crucial primes $p_i$ ($1\le i\le 4$) versus the characteristic solutions $\mathbf{u}_l$ ($1\le l\le 7$) as follows.

The first is where in the $(p_i,\mathbf{u}_l)$-entry we have the $D(p_i,|\delta_i|,u_{li},\mu_i)$ of Definition \ref{defnD}. The second is where in the $(p_i,\mathbf{u}_l)$-entry we have the $T_{p_i,\mathbf{u}_l}$ defined in \eqref{defnT1}, \eqref{defnT2}, and \eqref{defnT3}, and also at the bottom, the sets $A_{\mathbf{u}}$, $B_{\mathbf{u}}$, and $S_{\mathbf{u}}$. The first table helps us construct the second table because the definition of $T_{p_i,\mathbf{u}_l}$ depends on $D(p_i,|\delta_i|,u_{li},\mu_i)$.

\begin{table}[H]
 \caption{Table of $D(p_i,|\delta_i|,u_{li},\mu_i)$.}
 \label{table:indicatorfunB}
 \centering
  \begin{tabular}{llllllll}
   \hline
   $$ & $\mathbf{u}_1$ & $\mathbf{u}_2$ & $\mathbf{u}_3$& $\mathbf{u}_4$& $\mathbf{u}_5$& $\mathbf{u}_6$& $\mathbf{u}_7$\\
   \hline \hline
   $2$ & $[\mathrm{v}]$ & $[\mathrm{v}]$ & $[\mathrm{v}]$& $[\mathrm{iii}]$& $[\mathrm{iii}]$& $[\mathrm{iii}]$& $[\mathrm{iii}]$\\
   $3$ & $[\mathrm{vi}]$ & $[\mathrm{vi}]$ & $[\mathrm{vii}]$& $[\mathrm{vi}]$& $[\mathrm{vii}]$& $[\mathrm{vii}]$& $[\mathrm{vii}]$\\
   $7$ & $[\mathrm{v}]$ & $[\mathrm{ii}]$ & $[\mathrm{ii}]$& $[\mathrm{v}]$& $[\mathrm{v}]$& $[\mathrm{ii}]$& $[\mathrm{ii}]$\\
   $23$ & $[\mathrm{v}]$ & $[\mathrm{ii}]$ & $[\mathrm{v}]$& $[\mathrm{ii}]$& $[\mathrm{v}]$& $[\mathrm{ii}]$& $[\mathrm{v}]$\\
   \hline
  \end{tabular}
\end{table}

\begin{table}[H]
 \caption{Table of $T_{p_i,\mathbf{u}_l}$ and $A_\mathbf{u}$, $B_\mathbf{u}$, and $S_{\mathbf{u}}$.}
 \label{table:indicatorfunC}
 \centering
  \begin{tabular}{llllllll}
   \hline
   $$ & $\mathbf{u}_1$ & $\mathbf{u}_2$ & $\mathbf{u}_3$& $\mathbf{u}_4$& $\mathbf{u}_5$& $\mathbf{u}_6$& $\mathbf{u}_7$\\
   \hline \hline
   $2$ & $(\varnothing,\{1\})$ & $(\varnothing,\{1\})$ & $(\varnothing,\{1\})$& $(\varnothing,\varnothing)$& $(\varnothing,\varnothing)$& $(\varnothing,\varnothing)$& $(\varnothing,\varnothing)$\\
   $3$ & $(\varnothing,\varnothing)$ & $(\varnothing,\varnothing)$ & $(\varnothing,\{1\})$& $(\varnothing,\varnothing)$& $(\varnothing,\{1\})$& $(\varnothing,\{1\})$& $(\varnothing,\{1\})$\\
   $7$ & $(\{14\},\varnothing)$ & $(\{2\},\{14\})$ & $(\{2\},\{14\})$& $(\{14\},\varnothing)$& $(\{14\},\varnothing)$& $(\{2\},\{14\})$& $(\{2\},\{14\})$\\
   $23$ & $(\{506\},\varnothing)$ & $(\{22\},\{506\})$ & $(\{506\},\varnothing)$& $(\{22\},\{506\})$& $(\{506\},\varnothing)$& $(\{22\},\{506\})$& $(\{506\},\varnothing)$\\
   \hline
   $A_{\mathbf{u}}$ & $\{14,506\}$ & $\{2,22\}$ & $\{2,506\}$& $\{14,22\}$& $\{14,506\}$& $\{2,22\}$& $\{2,506\}$\\
   
   $B_{\mathbf{u}}$ & $\{1\}$ & $\{1,14,506\}$ & $\{1,14\}$& $\{506\}$& $\{1\}$& $\{1,14,506\}$& $\{1,14\}$\\
  
   $S_\mathbf{u}$ & $\varnothing$ & $\varnothing$ & $\varnothing$& $S(\{14,22\},\{506\})$& $\varnothing$& $\varnothing$& $\varnothing$\\
  \end{tabular}
\end{table}

We see immediately from the above table that the only nondegenerate solution is $\mathbf{u}_4$.

\subsection{Step 5.} The indicator function for $126$ is then
\begin{equation}\label{ind126}
  I = I_{14} I_{22} (1-I_{506})= I_{154}- I_{3542}.
\end{equation}
We conclude that $c(126)=154$ and $\omega_0(126)=\lcm(154,3542)=3542$. Since $\omega_f(126) = 31878$ (calculation omitted), we see that $n=126$ is a counterexample to Conjecture \ref{conj1}.

\section{A more refined conjecture but still a counterexample.}
So it was not difficult to find a counterexample to Conjecture \ref{conj1}, because there is a counterexample as small as $126$. One can try to improve Conjecture \ref{conj1}. One possibility is to look at Theorem \ref{indicator}, where the quantity \eqref{p}, namely
\begin{equation}
  \omega_b(n)=\lcm\left(\bigcup_{\mathbf{u}\in U^\ast}(A_{\mathbf{u}}\cup B_{\mathbf{u}})\right)
\end{equation}
is a better candidate for a small period. Thus it would be natural, to mimic Conjecture \ref{conj1}, to conjecture that $\omega_0(n)$ is always $1$ or $\omega_b(n)$. But this too, is false, as the smallest counterexample found by PARI/GP \cite{T} is $n=5957$.

\section{Table of indicator functions.}

We provide a table of the indicator functions, thereby fundamental periods and orders, for some numbers $n$. These are calculated by using PARI/GP \cite{T}.

\begin{table}[H]
 \caption{Indicator functions for some numbers $n$.}
 \label{table:indicatorfunA}
 \centering
  \begin{tabular}{llll}
   \hline
   $n$ & $I^n$ & $c(n)$ & $\omega_0(n)$ \\
   \hline \hline
   $13$ & $I_{15}-I_{195}-I_{465}+2I_{6045}$ & $15$ & $6045$ \\
   $17$ & $I_{280}-I_{4760}-I_{19880}+2I_{337960}$ & $280$ &$337960$ \\
   $18$ & $I_1$ & $1$ &$1$ \\
   $19$ & $I_{819}-I_{15561}$ & $819$ &$15561$ \\
   $26$ & $I_{15}-I_{195}-I_{465}+2I_{6045}$ & $15$ &$6045$ \\
   $37$ & $I_{12}- I_{444}- I_{876} +2I_{32412}$ &  $12$ &$32412$\\
   $39$ & $I_{15}-I_{195}-I_{465}+2I_{6045}$ & $15$ &$6045$ \\
   $48$ & $I_{3}-I_{21}$ &$3$ & $21$ \\
   $49$ & $I_{3243}-I_{22701}$ & $3243$ &$22701$ \\
   $56$ & $I_{3}-I_{21}-I_{39}+2I_{273}$ & $3$ &$273$ \\

$79$ & $I_{624}-I_{ 49296}-I_{ 60528}+2I_{ 4781712}$ & $624$ &$4781712$\\
$103$ & $I_{10234}-I_{ 1054102}$ & $10234$ & $1054102$\\

$107$ & $I_{37100}-I_{3969700}-I_{26007100}+2I_{2782759700}$ &$37100$ & $2782759700$\\
 $109$ & $I_{1686672}-I_{183847248}$ &$1686672$ & $183847248$\\
 $113$ & $I_{17360}-I_{1961680}-I_{ 5398960}+2I_ {610082480}$ & $17360$ &$610082480$\\
 $117$ & $I_{2054}$ & $2054$ &$2045$\\
 $119$ & $I_{123760}-I_{ 112745360}$ & $123760$ &$112745360$\\
 $122$ & $I_{80}-I_{1040}-I_{1360}-I_{4880}+I_{17680}+2I_{63440}+2I_{82960}-3I_{1078480}$ & $80$ &$1078480$ \\
   \hline
  \end{tabular}
\end{table}

We see that the indicator functions for $13$, $26$, and $39$ are identical. There is another curiosity in the above table, we see that for all these indicator functions, the largest subscript is a multiple of all smaller subscripts. This is not always true, and the smallest counterexample, found by PARI/GP \cite{T} is $n=21726$, with
\begin{gather*}
  I^{21726}=
I_{816}-I_{ 5712}-I_{ 8976}-I_{ 10608}+I_{ 16401}-I_{ 32802}+I_{ 62832}+I_{ 74256} \\
+I_{ 116688}-I_{ 816816}-I_{ 1098867}+I_{ 2197734}.
\end{gather*}
Here $816\nmid 2197734$.

\section{Appendix on periodic arithmetical functions.}

We first recall some basic properties of periodic arithmetical functions in Section \ref{5.1}. There, we prove a formula for the fundamental period of an arbitrary periodic arithmetical function $\mathbb{Z}\to\mathbb{C}$ (Theorem \ref{funperiod}). This formula is actually equivalent to the formula given in Theorem 9 in \cite{V1}, and we prove their equivalence in Section \ref{5.2}. Therefore Theorem \ref{funperiod} is not a new result. In Section \ref{12.3}, we prove Theorem 12 in \cite{V1} (labeled as Theorem \ref{Theorem12} in this paper) using Theorem \ref{funperiod}. In this way, our paper becomes more self-contained, with the logical dependencies as follows.

\begin{itemize}
  \item Though equivalent to Theorem 9 in \cite{V1}, Theorem \ref{funperiod} is proved from first principles in this paper. 
  \item Though the same as Theorem 12 in \cite{V1}, Theorem \ref{Theorem12} is proved by using Theorem \ref{funperiod}.
  \item Theorem \ref{Thm12} is proved by using Theorem \ref{Theorem12}. As a consequence, we have Corollary \ref{findw0}.
\end{itemize}

The paper \cite{V1} contains both its Theorems 9 and 12, with proofs. However, its proof of Theorem 12 does not seem to be a direct application of its Theorem 9.

\subsection{Basic properties and a formula for the fundamental period.}\label{5.1}

Let the function $e\colon \mathbb{R}\to \mathbb{C}$ be defined by
\begin{equation}
  e(t)=e^{2\pi i t}.
\end{equation}
Then the set of all roots of unity in $\mathbb{C}$ is
\begin{equation}
  R=\{e(\alpha)\mid \alpha\in\mathbb{Q}\}.
\end{equation}
For a $\zeta=e(\alpha)\in R$, where $0\le\alpha < 1$ is rational, write $\alpha=a/b$ in lowest terms, i.e.\ $a,b\in\mathbb{Z}$, $b>0$, and $(a,b)=1$, then denote $\nu(\zeta)=a$ and $\delta(\zeta)=b$. Thus $\zeta$ is a primitive $\delta(\zeta)$-th root of unity. For each integer $m\ge1$, we denote by $\zeta_m$ the primitive $m$-th root of unity $e(1/m)$.

Consider functions $g\colon R\to \mathbb{C}$ with $g(\zeta)=0$ outside a finite set. Let the set of all such functions be denoted $\mathcal{G}$. For a $g\in\mathcal{G}$, define
an arithmetical function $f\colon\mathbb{Z}\to\mathbb{C}$ by \begin{equation}\label{f}
  f(x)=\sum_{\zeta\in R}g(\zeta)\zeta^x.
\end{equation}
The sum is actually finite because $g(\zeta)=0$ for all but finitely many $\zeta$'s. We denote this $f$ by $\Phi(g)$. We prove in Theorem \ref{period} that $f$ is a periodic function. Let us recall some definitions.

\begin{definition}
  A function $f\colon\mathbb{Z}\to\mathbb{C}$ (respectively $f\colon\mathbb{N}\to\mathbb{C}$) is {\it periodic} if there is an integer $\omega>0$ such that for all $x\in\mathbb{Z}$ (respectively $x\in\mathbb{N}$),
  \begin{equation}
    f(x+\omega)=f(x).
  \end{equation}
  Such an $\omega$ is called a {\it period} of $f$, and we also say that $f$ is {\it periodic modulo} $\omega$. When $f$ is periodic, the smallest period of $f$ is called its {\it fundamental period}.
\end{definition}

We have the following characterization of periods.

\begin{theorem}\label{restriction}
Let $f\colon \mathbb{Z}\to\mathbb{C}$ be a periodic function. Then we have the following.
\renewcommand{\labelenumi}{(\roman{enumi})}
  \begin{enumerate}
    \item $f|_\mathbb{N}$, the restriction of $f$ to $\mathbb{N}$, is periodic. Moreover, an integer $\omega>0$ is a period of $f$ if and only if it is a period of $f|_\mathbb{N}$.
    \item The periods of $f$ are precisely the positive integral multiples of its fundamental period $\omega_0$. \label{ii}
  \end{enumerate}
\end{theorem}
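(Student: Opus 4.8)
\textbf{Proof proposal for Theorem \ref{restriction}.}

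The plan is to prove the two parts in sequence, with part (i) doing most of the work and part (ii) following from a standard greatest-common-divisor argument.

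For part (i), first observe that if $\omega$ is a period of $f$ then trivially $f(x+\omega)=f(x)$ continues to hold for all $x\in\mathbb{N}$, so $\omega$ is a period of $f|_\mathbb{N}$ and in particular $f|_\mathbb{N}$ is periodic. For the converse, suppose $\omega>0$ is a period of $f|_\mathbb{N}$, so that $f(x+\omega)=f(x)$ for every $x\ge1$. I want to extend this to all $x\in\mathbb{Z}$. Given an arbitrary $x\in\mathbb{Z}$, choose an integer $k\ge0$ large enough that $x+k\omega\ge1$; then applying the relation $f(y+\omega)=f(y)$ repeatedly at the points $y=x+k\omega, x+(k-1)\omega,\ldots$ (all of which, except possibly the last few, are $\ge1$) I can step both $f(x+\omega)$ and $f(x)$ up by $k$ copies of $\omega$ to the common value $f(x+(k+1)\omega)=f(x+k\omega)$, giving $f(x+\omega)=f(x)$. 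The one point to be slightly careful about is that the intermediate arguments stay in the range where the hypothesis applies; this is arranged by taking $k$ with $x+k\omega\ge1$, so that $x+k\omega, x+(k+1)\omega\ge1$ and the single equality $f(x+k\omega)=f((x+k\omega)+\omega)$ suffices after shifting both sides down by $k\omega$ using the same argument inductively. In short, periodicity on $\mathbb{N}$ propagates to all of $\mathbb{Z}$ because every residue class modulo $\omega$ meets $\mathbb{N}$ and $f$ is determined on each class by any one of its values there.

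For part (ii), let $\omega_0$ be the fundamental period; I must show the set of periods is exactly $\omega_0\mathbb{N}$. That every positive multiple $m\omega_0$ is a period is immediate by iterating the defining relation $m$ times. Conversely, let $\omega$ be any period; write $\omega = q\omega_0 + \rho$ with $0\le \rho < \omega_0$ by division. Then for all $x$, $f(x+\rho) = f(x+\rho+q\omega_0) = f(x+\omega) = f(x)$, using that $q\omega_0$ and $\omega$ are both periods. Hence $\rho$ is a period (or $\rho=0$); since $0\le\rho<\omega_0$ and $\omega_0$ is the \emph{smallest} positive period, we must have $\rho=0$, so $\omega_0\mid\omega$. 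This is precisely the content of Exercise 17(a) on p.\ 145 of \cite{A} cited earlier, and it also yields the claim made in the introduction that the set of periods of $n$ consists of the multiples of $\omega_0(n)$.

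I do not expect any serious obstacle here; the only mildly delicate point is the bookkeeping in part (i) to make sure the arguments of $f$ stay positive while extending the period from $\mathbb{N}$ to $\mathbb{Z}$, and that is handled cleanly by choosing the shift $k\omega$ large enough at the outset. Part (ii) is entirely routine once one invokes the division algorithm.
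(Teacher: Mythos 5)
Your part (ii) is correct and is essentially the paper's own argument (division algorithm plus minimality of $\omega_0$). The problem is in the converse direction of part (i). There you assume only that $f(y+\omega)=f(y)$ for $y\ge 1$ and try to reach an arbitrary $x\in\mathbb{Z}$ by stepping along the chain $x, x+\omega,\ldots,x+k\omega$; but for negative $x$ the first several links of that chain lie below $1$, exactly where the hypothesis does not apply, and your proposed repair (``shifting both sides down by $k\omega$ using the same argument inductively'') is circular: the statement being shifted down is precisely the one you are trying to prove. Note that nowhere in your part (i) do you use the standing hypothesis that $f$ is periodic on all of $\mathbb{Z}$, and without it the claim is simply false: take $f(x)=0$ for $x\ge 1$ and $f(x)=1$ for $x\le 0$; every $\omega>0$ is a period of $f|_\mathbb{N}$, yet none is a period of $f$. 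So any correct argument must invoke a period of $f$ on $\mathbb{Z}$, and since your reasoning never does, the gap is genuine rather than a bookkeeping issue. Likewise your closing slogan that ``$f$ is determined on each residue class modulo $\omega$ by any one of its values there'' is exactly the conclusion, not something available from the hypothesis.

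The paper closes this gap as follows: since $f$ is periodic on $\mathbb{Z}$, choose some period $\mu$ of $f$ (a priori unrelated to $\omega$). Given $x\in\mathbb{Z}$, pick $q>0$ with $x+q\mu>0$, so also $x+\omega+q\mu>0$. Then
\begin{equation*}
  f(x)=f(x+q\mu)=f(x+q\mu+\omega)=f(x+\omega),
\end{equation*}
where the outer equalities use the $\mathbb{Z}$-period $\mu$ and the middle one uses that $\omega$ is a period of $f|_\mathbb{N}$ at the point $x+q\mu\ge 1$. Translating by $q\mu$ rather than by multiples of $\omega$ is the step your write-up is missing; with that substitution your proof of (i) goes through, and (ii) stands as written.
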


\begin{proof}
\renewcommand{\labelenumi}{(\roman{enumi})}
  \begin{enumerate}
    \item Plainly any period of $f$ is also a period of its restriction $f|_\mathbb{N}$. We only need to prove that any period of $f|_\mathbb{N}$ is conversely a period of $f$. So let $\omega$ be a period of $f|_\mathbb{N}$. Choose a period $\mu$ of $f$, then $\mu$ is also a period of $f|_\mathbb{N}$. For any $x\in\mathbb{Z}$, there exists a positive integer $q>0$ such that $x+q\mu>0$. Also, $x+\omega+q\mu>0$. Since $\omega$ is a period of $f|_\mathbb{N}$, $f(x+q\mu)=f(x+q\mu+\omega)$. Now, since $\mu$ is a period of $f$,
    \begin{equation}
      f(x)=f(x+q\mu)=f(x+q\mu+\omega)=f(x+\omega).
    \end{equation}
    Since the above holds for all $x\in\mathbb{Z}$, $\omega$ is a period of $f$.
    \item Let $\omega$ be a period of $f$. Use the division algorithm to write $\omega=q\omega_0+r$, where $q,r\in\mathbb{Z}$ are such that $0\le r<\omega_0$ and $q>0$. Assume that $r>0$, then $r=\omega-q\omega_0$. For any $x\in\mathbb{Z}$,
    \begin{equation}
      f(x)=f(x+\omega)=f(x+\omega-q\omega_0)=f(x+r),
    \end{equation}
    because both $\omega$ and $\omega_0$ are periods of $f$. Hence $r$ is a period of $f$ smaller than $\omega_0$, this is a contradiction. Hence $r=0$ and so $\omega_0\mid \omega$. The converse, that any positive integral multiple of $\omega_0$ is a period of $f$, is plain.
  \end{enumerate}
  
\end{proof}

For the rest of this section, we shall deal only with arithmetical functions $f\colon\mathbb{Z}\to\mathbb{C}$ defined for every integer.

\begin{theorem}\label{period}
  Let $g\in\mathcal{G}$, then $\Phi(g)$ is periodic modulo
  \begin{equation}\label{lcm}
    \lcm\{\delta(\zeta)\mid \zeta\in R,g(\zeta)\ne0\}.
  \end{equation}
\end{theorem}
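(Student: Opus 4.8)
The plan is a direct computation from the definition \eqref{f} of $f=\Phi(g)$. Write
\begin{equation}
  \omega = \lcm\{\delta(\zeta)\mid \zeta\in R,\ g(\zeta)\ne 0\};
\end{equation}
this is a well-defined positive integer because $g$ vanishes outside a finite set, with the convention that an empty least common multiple equals $1$ (which disposes of the case $g\equiv 0$, where $f$ is the zero function and is trivially periodic modulo $1$). I would state this well-definedness remark first so that the rest of the argument applies uniformly.

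Next I would fix an arbitrary $x\in\mathbb{Z}$ and expand
\begin{equation}
  f(x+\omega)=\sum_{\zeta\in R}g(\zeta)\,\zeta^{x+\omega}=\sum_{\zeta\in R}g(\zeta)\,\zeta^{x}\,\zeta^{\omega},
\end{equation}
where both sums are finite. The only terms that contribute are those with $g(\zeta)\ne 0$; for such a $\zeta$ we know, by construction, that $\zeta$ is a primitive $\delta(\zeta)$-th root of unity, and by the definition of $\omega$ we have $\delta(\zeta)\mid\omega$, hence $\zeta^{\omega}=1$. Thus each surviving summand equals $g(\zeta)\zeta^{x}$, and the whole sum collapses to $\sum_{\zeta\in R}g(\zeta)\zeta^{x}=f(x)$.

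Since $x$ was arbitrary, $f(x+\omega)=f(x)$ for every $x\in\mathbb{Z}$, so $\omega$ is a period of $f$, which is exactly the assertion. There is no substantive obstacle in this proof; the only points that merit a sentence of care are the finiteness of the support of $g$ (so that the sums can be rearranged freely and the $\lcm$ is over a finite set) and the degenerate case $g\equiv 0$, both handled by the conventions set up at the start.
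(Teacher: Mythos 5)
Your argument is the same as the paper's: pick out the finitely many $\zeta$ with $g(\zeta)\ne 0$, observe $\delta(\zeta)\mid\omega$ so $\zeta^{\omega}=1$, and conclude $f(x+\omega)=f(x)$ term by term. The extra remarks on well-definedness and the $g\equiv 0$ case are harmless bookkeeping that the paper leaves implicit; otherwise the proofs coincide.
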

\begin{proof}
  Let the least common multiple \eqref{lcm} be denoted by $\omega$ and $\Phi(g)=f$. For each $\zeta\in R$ with $g(\zeta)\ne0$, $\zeta$ is a $\delta(\zeta)$-th root of unity, therefore $\zeta^{x+\delta(\zeta)}=\zeta^x$ for every $x\in\mathbb{Z}$. As $\omega$ is a multiple of $\delta(\zeta)$, $\zeta^{x+\omega}=\zeta^x$ for every $x\in\mathbb{Z}$. Consequently, for every $x\in\mathbb{Z}$,
  \begin{equation}
    f(x+\omega)=\sum_{\zeta\in R,g(\zeta)\ne0}g(\zeta)\zeta^{x+\omega}=\sum_{\zeta\in R,g(\zeta)\ne0}g(\zeta)\zeta^{x}=f(x).
  \end{equation}
\end{proof}

Let the set of all periodic arithmetical functions $f\colon\mathbb{Z}\to\mathbb{C}$ be denoted by $\mathcal{F}$. The above established a mapping $\Phi\colon\mathcal{G}\to\mathcal{F}$. We shall prove that it is bijective.

\begin{theorem}
  The mapping $\Phi\colon\mathcal{G}\to\mathcal{F}$ is bijective.
\end{theorem}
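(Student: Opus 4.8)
The plan is to prove bijectivity of $\Phi\colon\mathcal{G}\to\mathcal{F}$ by establishing injectivity and surjectivity separately, the first being essentially a linear-independence statement and the second an explicit construction.

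For \emph{injectivity}, since $\Phi$ is visibly linear, it suffices to show that $\Phi(g)=0$ (the zero function) forces $g=0$. Suppose $g\in\mathcal{G}$ with $\Phi(g)=0$; let $\zeta_1,\ldots,\zeta_N$ be the finitely many roots of unity at which $g$ is nonzero, say with $g(\zeta_i)=c_i\ne 0$. Then $\sum_{i=1}^{N}c_i\zeta_i^x=0$ for all $x\in\mathbb{Z}$. Evaluating at $x=0,1,\ldots,N-1$ yields a homogeneous linear system whose coefficient matrix is the Vandermonde matrix $(\zeta_i^{\,j})_{0\le j\le N-1,\,1\le i\le N}$; since the $\zeta_i$ are distinct, this matrix is invertible, forcing every $c_i=0$, a contradiction. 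Hence $g=0$ and $\Phi$ is injective. (Alternatively one can cite the classical linear independence of distinct characters / exponential functions, but the Vandermonde argument is self-contained.)

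For \emph{surjectivity}, I would take an arbitrary $f\in\mathcal{F}$, fix a period $\omega$ of $f$, and produce a $g\in\mathcal{G}$ supported on $R(\omega)$ (the $\omega$-th roots of unity) with $\Phi(g)=f$. The natural choice is the finite discrete Fourier transform: set
\begin{equation}
  g(\zeta_\omega^{\,k})=\frac{1}{\omega}\sum_{x=0}^{\omega-1}f(x)\,\zeta_\omega^{-kx},\qquad k=0,1,\ldots,\omega-1,
\end{equation}
and $g(\zeta)=0$ for $\zeta\notin R(\omega)$. One then checks, using the orthogonality relation $\sum_{k=0}^{\omega-1}\zeta_\omega^{k(x-y)}=\omega$ if $\omega\mid x-y$ and $0$ otherwise, that $\sum_{k=0}^{\omega-1}g(\zeta_\omega^{\,k})\zeta_\omega^{\,kx}=f(x)$ first for $x\in\{0,\ldots,\omega-1\}$ and then for all $x\in\mathbb{Z}$ because both sides are periodic modulo $\omega$ (the left side by Theorem \ref{period}, the right side by hypothesis). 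Thus $\Phi(g)=f$. Note that $g$ constructed this way may fail to be supported on \emph{primitive} $\omega$-th roots of unity, but that is irrelevant here: $g\in\mathcal{G}$ only requires finite support, which holds.

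The only mild subtlety — and the step I would be most careful about — is the passage from ``agrees on one period'' to ``agrees everywhere'': it must be invoked that both $f$ and $\Phi(g)$ are periodic modulo the \emph{same} $\omega$, which is why choosing $g$ supported on $R(\omega)$ rather than on some larger set of roots of unity matters. Everything else is routine: linearity of $\Phi$, the Vandermonde/orthogonality computations, and bookkeeping of the finite sums. So the proof will be short, with the DFT inversion formula doing the real work for surjectivity and the Vandermonde determinant doing it for injectivity.
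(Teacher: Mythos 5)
Your proof is correct and takes essentially the same approach as the paper: injectivity via the Vandermonde determinant applied to $\sum c_i\zeta_i^x=0$, and surjectivity via the finite Fourier expansion of a periodic function of period $\omega$. The only cosmetic difference is that the paper cites Theorem 8.4 of Apostol for the existence of the expansion $f(x)=\sum_{r=0}^{\omega-1}h_r\zeta_\omega^{xr}$, whereas you reconstruct the coefficients explicitly by the DFT/orthogonality computation; and you phrase injectivity as trivial kernel using linearity of $\Phi$, while the paper directly equates $g_1$ and $g_2$ — both reduce to the same Vandermonde argument.
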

\begin{proof}
  Let $f\in\mathcal{F}$ be periodic modulo $\omega$. By Theorem 8.4 on p.\ 160 in \cite{A}, there exist unique coefficients $h_r\in\mathbb{C}$ for $0\le r<\omega$ such that for all $x\in\mathbb{Z}$,
  \begin{equation}
    f(x)=\sum^{\omega-1}_{r=0}h_r\zeta^{xr}_\omega.
  \end{equation}
   If we define the function $g\colon R\to\mathbb{C}$ by setting $g(\zeta^r_\omega)=h_r$ for $0\le r<\omega$ and $g(\zeta)=0$ for all other $\zeta\in R$, it is easy to see that $g\in\mathcal{G}$ and that $\Phi(g)=f$. Whence $\Phi$ is surjective.
  
  We now prove injectivity. Assume that $\Phi(g_1)=\Phi(g_2)=f\in\mathcal{F}$, then
for all $x\in\mathbb{Z}$,  \begin{equation}
    f(x)=\sum_{\zeta\in R}g_1(\zeta)\zeta^x=\sum_{\zeta\in R}g_2(\zeta)\zeta^x.
  \end{equation}
  Let $S=\{\zeta\in R\mid(g_1(\zeta),g_2(\zeta))\ne(0,0)\}$, then $S$ is finite and the above sums can be written as
  \begin{equation}
    \sum_{\zeta\in S}g_1(\zeta)\zeta^x=\sum_{\zeta\in S}g_2(\zeta)\zeta^x.
  \end{equation}
  Consequently, for all $x\in\mathbb{Z}$,
  \begin{equation}\label{difference}
    \sum_{\zeta\in S}(g_1(\zeta)-g_2(\zeta))\zeta^x=0.
  \end{equation}
  If $S=\varnothing$, plainly $g_1=g_2=0$ is identically zero. Thus assume otherwise and list the elements of $S$ as $\{\xi_1,\ldots,\xi_m\}$. Put $x_j=g_1(\xi_j)-g_2(\xi_j)$ for $1\le j\le m$. Then \eqref{difference} becomes
  \begin{equation}
    \sum^m_{j=1}x_j\xi^x_j=0.
  \end{equation}
  Since this holds for all $x\in\mathbb{Z}$, in particular it holds for all $0\le x<m$, and we have a homogeneous system of linear equations. Since the Vandermonde determinant
  \begin{equation}
  \begin{vmatrix}
    1 & 1 & \cdots & 1 \\
    \xi_1 & \xi_2 & \cdots & \xi_m \\
    \cdots & \cdots & \cdots & \cdots \\
    \xi^{m-1}_1 & \xi^{m-1}_2 & \cdots & \xi^{m-1}_m
  \end{vmatrix}\ne0
  \end{equation}
  as the $\xi_j$'s are distinct,  $x_j=0$ for all $1\le j\le m$, i.e.\ $g_1(\xi_j)=g_2(\xi_j)$ for all $1\le j\le m$. In other words, $g_1(\zeta)=g_2(\zeta)$ for all $\zeta\in S$. Since $g_1(\zeta)=g_2(\zeta)=0$ for all $\zeta\in R\setminus S$, we have shown that $g_1=g_2$. Whence $\Phi$ is injective.
\end{proof}
  
\begin{theorem}\label{funperiod}
  Let $g\in\mathcal{G}$, then the fundamental period of $\Phi(g)$ is indeed given by \eqref{lcm}.
\end{theorem}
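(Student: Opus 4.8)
The plan is to upgrade Theorem \ref{period}, which gives that $\eqref{lcm}$ is \emph{a} period, to the statement that it is the \emph{smallest} period. Write $f = \Phi(g)$ and let $\omega$ denote the quantity $\eqref{lcm}$, i.e.\ $\omega = \lcm\{\delta(\zeta) \mid \zeta\in R,\ g(\zeta)\ne 0\}$. By part \eqref{ii} of Theorem \ref{restriction}, the fundamental period $\omega_0$ of $f$ divides every period, so in particular $\omega_0 \mid \omega$. Thus it suffices to prove the reverse divisibility $\omega \mid \omega_0$, and since $\omega_0$ is itself a period of $f$, it is enough to show: if $\mu > 0$ is any period of $f$, then $\delta(\zeta) \mid \mu$ for every $\zeta$ in the (finite) support $S = \{\zeta \in R \mid g(\zeta)\ne 0\}$.

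First I would fix a period $\mu$ of $f$ and exploit $f(x+\mu) = f(x)$ for all $x\in\mathbb{Z}$. Using the defining formula $\eqref{f}$, this says $\sum_{\zeta\in S} g(\zeta)(\zeta^\mu - 1)\zeta^x = 0$ for all $x\in\mathbb{Z}$. Now I invoke the same linear-independence device already used in the proof that $\Phi$ is injective: evaluating at $x = 0, 1, \ldots, |S|-1$ gives a homogeneous linear system whose coefficient matrix is the Vandermonde matrix of the distinct elements of $S$, hence invertible. Therefore $g(\zeta)(\zeta^\mu - 1) = 0$ for every $\zeta \in S$. Since $g(\zeta)\ne 0$ on $S$, we get $\zeta^\mu = 1$ for every $\zeta\in S$. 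But $\zeta$ is a primitive $\delta(\zeta)$-th root of unity, so $\zeta^\mu = 1$ forces $\delta(\zeta) \mid \mu$. Taking least common multiples over $\zeta\in S$ yields $\omega \mid \mu$, and applying this with $\mu = \omega_0$ gives $\omega \mid \omega_0$, completing the proof.

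The main obstacle is essentially bookkeeping rather than a genuine difficulty: one must handle the degenerate case $S = \varnothing$ (then $g = 0$, $f$ is identically zero, $\eqref{lcm}$ is an empty $\lcm$ which by convention is $1$, and the fundamental period of the zero function is indeed $1$), and one should be careful that the Vandermonde argument only needs $f(x+\mu)=f(x)$ at finitely many integer values of $x$, which is given. Everything else — the reduction via Theorem \ref{restriction}\eqref{ii} and the re-use of the Vandermonde non-vanishing already established in the excerpt — is routine. I would therefore present the argument crisply: reduce to showing every period is a multiple of $\omega$, extract $\zeta^\mu = 1$ from the Vandermonde system, and conclude $\delta(\zeta)\mid\mu$.
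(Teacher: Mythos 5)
Your proof is correct, but it follows a genuinely more direct route than the paper's. The paper argues by contradiction: assuming the fundamental period $\omega_0$ is strictly smaller than $\omega$, it invokes Theorem \ref{restriction}\eqref{ii} to get $\omega_0\mid\omega$, expands $f$ by Apostol's Theorem 8.4 in the $\omega_0$-th roots of unity, and then uses the (already established, Vandermonde-based) injectivity of $\Phi$ to conclude that $g$ must vanish off the $\omega_0$-th roots of unity --- contradicting the existence of some $\xi$ in the support with $\delta(\xi)\nmid\omega_0$. You instead prove directly that \emph{every} period $\mu$ of $\Phi(g)$ satisfies $\omega\mid\mu$: from $f(x+\mu)=f(x)$ you get $\sum_{\zeta\in S}g(\zeta)(\zeta^{\mu}-1)\zeta^{x}=0$ for all $x$, and the Vandermonde system at $x=0,\dots,|S|-1$ forces $\zeta^{\mu}=1$, hence $\delta(\zeta)\mid\mu$, on the whole support. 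Your version buys self-containedness (no appeal to Apostol's finite Fourier expansion, and no need to route through the injectivity of $\Phi$ as a separate statement) and yields the slightly stronger conclusion that the period set of $\Phi(g)$ is exactly the set of positive multiples of \eqref{lcm}; the paper's version is shorter given the machinery it has already quoted and proved. Both arguments rest, at bottom, on the same linear-independence of the sequences $x\mapsto\zeta^{x}$ via a nonvanishing Vandermonde determinant, and your handling of the degenerate case $S=\varnothing$ (empty $\lcm$ equal to $1$, zero function of fundamental period $1$) is fine.
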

\begin{proof}
  Let $\Phi(g)=f$. Theorem \ref{period} already showed that
  \begin{equation}\label{k}
    \omega=\lcm\{\delta(\zeta)\mid \zeta\in R,g(\zeta)\ne0\}
\end{equation}
is a period of $f$. We need to show that it is the smallest one. Assume that the smallest period is actually $\omega_0$, where $0<\omega_0<\omega$. By part (ii) of Theorem \ref{restriction}, $\omega_0\mid \omega$. By Theorem 8.4 on p.\ 160 in \cite{A}, there exist unique coefficients $h_r$ for $0\le r<\omega_0$ such that for all $x\in\mathbb{Z}$,
\begin{equation}
    f(x)=\sum^{\omega_0-1}_{r=0}h_r\zeta^{xr}_{\omega_0}.
  \end{equation}
  Hence we see that $g(\zeta^r_{\omega_0})=h_r$ for $0\le r<\omega_0$ and $g(\zeta)=0$ for all other $\zeta\in R$.
  
  Now $\omega_0<\omega$ and so in view of \eqref{k} there exists some $\xi\in R$ with $g(\xi)\ne0$ such that $\delta(\xi)\nmid \omega_0$. We have
  \begin{equation}
   \xi^{\omega_0}=\left(e\left(\frac{\nu(\xi)}{\delta(\xi)}\right)\right)^{\omega_0}=e\left(\frac{\nu(\xi)\omega_0}{\delta(\xi)}\right).
  \end{equation}
  Now the argument on the right above is not an integer. For if it is, then $\delta(\xi)\mid \nu(\xi)\omega_0$. Since $(\delta(\xi),\nu(\xi))=1$, $\delta(\xi)\mid \omega_0$, which is a contradiction. Therefore $\nu(\xi)\omega_0/\delta(\xi)$ is not an integer, and so $\xi^{\omega_0}\ne1$. That is, $\xi$ is not a $\omega_0$-th root of unity. But $g$ vanishes at all $\zeta\in R$ which is not a $\omega_0$-th root of unity. This is a contradiction. Hence $\omega$ is indeed the fundamental period of $f$.
\end{proof}

\subsection{Equivalence of Theorem \ref{funperiod} and Theorem 9 in \cite{V1}.}\label{5.2}

Let $f\colon\mathbb{Z}\to\mathbb{C}$ be a periodic arithmetical function of period $\omega$, then by Theorem 8.4 on p.\ 160 in \cite{A}, there exist unique coefficients $h_r$ for $0\le r<\omega$ such that for all $x\in\mathbb{Z}$,
\begin{equation}
  f(x)=\sum^{\omega-1}_{r=0}h_r\zeta^{xr}_{\omega}.
\end{equation} 
We can write $f(x)$ as
\begin{equation}
  f(x)=\sum^\omega_{k=1}h_{\omega-k}\zeta^{-xk}_\omega,
\end{equation}
or if we rename $h_{\omega-k}$ as $h_k$,
\begin{equation}
  f(x)=\sum^\omega_{k=1}h_{k}\zeta^{-xk}_\omega.
\end{equation}
Let the set of $1\le k\le \omega$ such that $h_k\ne0$ be $\{k_1,\ldots,k_l\}$, then according to Theorem 9 in \cite{V1}, the fundamental period of $f$ is
\begin{equation}\label{hisp}
  \omega_0=\frac{\omega}{(k_1,\ldots,k_l,\omega)},
\end{equation}
where the parentheses denote a greatest common divisor.
On the other hand, according to Theorem \ref{funperiod},
\begin{equation}\label{myp}
  \omega_0=\lcm(\delta(\zeta^{-k_1}_{\omega}),\ldots,\delta(\zeta^{-k_l}_{\omega})).
\end{equation}
We show that \eqref{hisp} is equal to \eqref{myp}, i.e.\
\begin{equation}\label{need}
  \frac{\omega}{(k_1,\ldots,k_l,\omega)}=\lcm(\delta(\zeta^{-k_1}_{\omega}),\ldots,\delta(\zeta^{-k_l}_{\omega})).
\end{equation}
When there are no $1\le k\le \omega$ such that $h_k\ne0$, i.e.\ when $l=0$, this is plain. Thus assume that $l>0$. Notice that for $1\le j\le l$,
\begin{equation}\label{act}
  \delta(\zeta^{-k_j}_\omega)=\delta\left(e\left(\frac{-k_j}{\omega}\right)\right)=\frac{\omega}{(k_j,\omega)},
\end{equation}
which is easily seen to divide the left-hand-side of \eqref{need}. Hence in \eqref{need}, the left-hand side is a multiple of the right-hand side.

Conversely, denote the right-hand side of \eqref{need} by $M$. Notice that for $1\le j\le l$, because of \eqref{act},
\begin{equation}\label{since}
  \frac{\omega}{(k_j,\omega)} \mid M,\quad\text{and therefore}\quad \omega\mid M (k_j,\omega).
\end{equation}
Since
\begin{equation}\label{comp}
  (k_1,\ldots,k_l,\omega)=((k_1,\omega),\ldots,(k_l,\omega)),
\end{equation}
we have a linear combination
\begin{equation}\label{comb2}
  (k_1,\ldots,k_l,\omega)=\sum^l_{j=1}y_j(k_j,\omega),
\end{equation}
where the $y_j$'s are integers. We prove that the right-hand side $M$ of \eqref{need} is a multiple of the left-hand side, or equivalently,
\begin{equation}
  \omega \mid M (k_1,\ldots,k_l,\omega).
\end{equation}
Since \eqref{since} holds for all $1\le j\le l$, using also \eqref{comb2},
\begin{equation}
  \omega\mid \sum^l_{j=1}y_jM (k_j,\omega)=M\sum^l_{j=1}y_j (k_j,\omega)=M(k_1,\ldots,k_l,\omega).
\end{equation}
Since we have proved that each side of \eqref{need} is a multiple of the other side, equality holds.

In summary, Theorem \ref{funperiod} and Theorem 9 in \cite{V1} both give a formula for the fundamental period of a periodic arithmetical function. These formulae might look different on the surface, but indeed give the same fundamental period.

\subsection{Proof of Theorem \ref{Theorem12} using Theorem \ref{funperiod}}\label{12.3}

  We have borrowed Theorem 12 in \cite{V1} (Theorem \ref{Theorem12} in this paper) in our proof of Theorem \ref{Thm12}. To make this paper self-contained, we provide a proof of Theorem \ref{Theorem12} using Theorem \ref{funperiod}.

\begin{proof}
  For each $1\le\ j\le m$, we have
  \begin{equation}
    f_j(x)=\sum_{\zeta\in R^\ast(\omega_j)}g_j(\zeta)\zeta^x,
  \end{equation}
  where $g_j=\Phi^{-1}(f_j)$.
  Since the $R^\ast(\omega_j)$ are pairwise disjoint, $\Phi^{-1}(f)=g$, where
  \begin{equation}
    g(\zeta)=\begin{cases}
      g_j(\zeta) & \text{if $\zeta\in R^\ast(\omega_j)$ for some $1\le j\le m$,} \\
      0 & \text{otherwise.}
    \end{cases}
  \end{equation}
  By Theorem \ref{funperiod}, the fundamental period of $f$ is
  \begin{equation}
    L=\lcm\{\delta(\zeta)\mid g(\zeta)\ne0\},
  \end{equation}
  whereas the fundamental period asserted by the theorem is
  \begin{equation}
    T=\lcm\{\omega_1,\ldots,\omega_m\}.
  \end{equation}
  So we have to prove that $L=T$.
  
  Let $\zeta$ be a root of unity such that $g(\zeta)\ne0$, then $\zeta\in R^\ast(\omega_j)$ for some $1\le j \le m$. Since $\delta(\zeta)=\omega_j$ and $\omega_j\mid T$, we have $\delta(\zeta)\mid T$. Since $\delta(\zeta)\mid T$ for any root of unity $\zeta$ such that $g(\zeta)\ne0$, we have $L\mid T$. On the other hand, let $1\le j\le m$. Since $f_j\ne 0$, $g(\zeta)=g_j(\zeta)\ne0$ for some $\zeta\in R^\ast(\omega_j)$. Since $\delta(\zeta)=\omega_j$ and $\delta(\zeta)\mid L$, we have $\omega_j\mid L$. Since $\omega_j\mid L$ for any $1\le j\le m$, we have $T\mid L$. Since both $L\mid T$ and $T\mid L$, we have $L=T$.
\end{proof}

\section{Acknowledgment}
The author wish to thank Professor Kohji Matsumoto for the careful reading of this paper and Doctor Yuta Suzuki for kindly directing the author to the papers \cite{R}, \cite{V1}, \cite{V2}, \cite{V3}, and providing comments which greatly improved the presentation of this paper.


\begin{thebibliography}{99}
\bibitem{A} T. M. Apostol, {\it Introduction to Analytic Number Theory} (Springer-Verlag, 1976).

\bibitem{R} A. Restrepo and L. P. Chac\'{o}n, On the period of sums of discrete periodic signals, {\it IEEE Signal Process Lett}. {\bf 5}(7) (1998) 164--166.

\bibitem{T} The PARI Group. (2020). {\it PARI/GP}, Version 2.13.0. Bordeaux. {\tt http://pari.math.u-bordeaux.fr/}

\bibitem{T1} D. Tsai, A recurring pattern in natural numbers of a certain property, {\it Integers}. {\bf 21} (2021) Paper No.\ A32.

\bibitem{T2} D. Tsai, Natural numbers satisfying an unusual property, {\it S\={u}gaku Seminar}. {\bf 57}(11) (2018) 35--36. (written in Japanese).

\bibitem{V1} P. P. Vaidyanathan, Ramanujan sums in the context of signal
processing--Part I: Fundamentals, {\it IEEE Trans Signal Process}. {\bf 62}(16) (2014) 4145--4157.

\bibitem{V2} P. P. Vaidyanathan, Ramanujan sums in the context of
signal processing--Part II:
FIR representations and applications, {\it IEEE Transactions on Signal Processing}. {\bf 62}(16) (2014) 4158--4172.

\bibitem{V3} P. P. Vaidyanathan and S. Tenneti, Srinivasa Ramanujan and
signal-processing problems, {\it Philos. Trans. Roy. Soc. A}. {\bf 378} (2019) 20180446.
\end{thebibliography}
\end{document}